\newcommand{\tddeux}{\begin{picture}(5,5)(0,-1)
\put(3,0){\circle*{3}}
\thicklines
\put(3,0){\line(0,1){5}}
\put(3,5){\circle*{3}}
\end{picture}}
\newcommand{\tddeuxa}[2]{\begin{picture}(5,5)(0,-1)
\put(3,-1){\circle*{3}}
\thicklines
\put(3,-0.5){\line(0,1){7}}
\put(3,6){\circle*{3}}
\put(6,-3.8){\tiny #1}
\put(6,4.2){\tiny #2}
\end{picture}}
\newcommand{\tddeuxaa}[2]{\begin{picture}(5,5)(0,-1)
\put(3,-3.5){\circle*{3}}
\thicklines
\put(3,-3.5){\line(0,1){6}}
\put(3,3.5){\circle*{3}}
\put(6,-6){\tiny #1}
\put(6,1){\tiny #2}
\end{picture}}
\newcommand{\tdddeuxa}[3]{\begin{picture}(5,5)(0,-1)
\put(3,0){\circle*{3}}
\thicklines
\put(3,0){\line(0,1){6}}
\put(3,6){\circle*{3}}
\put(3,6){\line(0,1){6}}
\put(3,12){\circle*{3}}
\put(6,-5){\tiny #1}
\put(6,2.5){\tiny #2}
\put(6,8.5){\tiny #3}
\end{picture}}
\newcommand{\tdddeuxaa}[3]{\begin{picture}(5,5)(0,-1)
\put(3,-8){\circle*{3}}
\thicklines
\put(3,-8){\line(0,1){6}}
\put(3,-2){\circle*{3}}
\put(3,-2){\line(0,1){6}}
\put(3,4){\circle*{3}}
\put(6,-12){\tiny #1}
\put(6,-6){\tiny #2}
\put(6,0){\tiny #3}
\end{picture}}
\newcommand{\tdtroisuna}[3]{\begin{picture}(12,12)(-5,-1)
\put(3,0){\circle*{3}}
\put(-0.65,0){$\vee$}
\put(6,7){\circle*{3}}
\put(0,7){\circle*{3}}
\put(5,-2){\tiny #1}
\put(8,5){\tiny #2}
\put(-6,5){\tiny #3}
\end{picture}}
\newtheorem{theorem}{Theorem}[section]
\newtheorem{proposition}[theorem]{Proposition}
\newtheorem{lemma}[theorem]{Lemma}
\newtheorem{prop-def}{Proposition-Definition}[section]
\newtheorem{coro-def}{Corollary-Definition}[section]
\theoremstyle{definition}
\newtheorem{definition}[theorem]{Definition}
\newtheorem{remark}[theorem]{Remark}
\newcommand{\nc}{\newcommand}
\nc{\tred}[1]{\textcolor{red}{#1}}
\nc{\tblue}[1]{\textcolor{blue}{#1}}
\nc{\tgreen}[1]{\textcolor{green}{#1}}
\nc{\tpurple}[1]{\textcolor{purple}{#1}}
\nc{\btred}[1]{\textcolor{red}{\bf #1}}
\nc{\btblue}[1]{\textcolor{blue}{\bf #1}}
\nc{\btgreen}[1]{\textcolor{green}{\bf #1}}
\nc{\btpurple}[1]{\textcolor{purple}{\bf #1}}
\nc{\NN}{{\mathbb N}}
\nc{\ncsha}{{\mbox{\cyr X}^{\mathrm NC}}} \nc{\ncshao}{{\mbox{\cyr
X}^{\mathrm NC}_0}}
\newcommand{\delete}[1]{}
\nc{\mlabel}[1]{\label{#1}}
\nc{\mcite}[1]{\cite{#1}}
\nc{\mref}[1]{\ref{#1}}
\nc{\meqref}[1]{\eqref{#1}}
\nc{\mbibitem}[1]{\bibitem{#1}}
\nc{\mlabel}[1]{\label{#1}{\hfill \hspace{1cm}{\bf{{\ }\hfill(#1)}}}}
\nc{\mcite}[1]{\cite{#1}{{\bf{{\ }(#1)}}}}
\nc{\mref}[1]{\ref{#1}{{\bf{{\ }(#1)}}}}
\nc{\meqref}[1]{\eqref{#1}{{\bf{{\ }(#1)}}}}
\nc{\mbibitem}[1]{\bibitem[\bf #1]{#1}}
\nc{\sha}{{\mbox{\cyr X}}}  
\newfont{\scyr}{wncyr10 scaled 550}
\nc{\ssha}{\mbox{\bf \scyr X}}
\nc{\shap}{{\mbox{\cyrs X}}} 
\nc{\shpr}{\diamond}    
\nc{\shp}{\ast} \nc{\shplus}{\shpr^+}
\nc{\shprc}{\shpr_c}    
\nc{\dep}{\mrm{dep}} \nc{\lc}{\lfloor} \nc{\rc}{\rfloor}
\nc{\db}{\leq_{\rm db}}
\nc{\cala}{{\mathcal A}} \nc{\calb}{{\mathcal B}}
\nc{\calc}{{\mathcal C}}
\nc{\cald}{{\mathcal D}} \nc{\cale}{{\mathcal E}}
\nc{\calf}{{\mathcal F}} \nc{\calg}{{\mathcal G}}
\nc{\calh}{{\mathcal H}} \nc{\cali}{{\mathcal I}}
\nc{\call}{{\mathcal L}} \nc{\calm}{{\mathcal M}}
\nc{\caln}{{\mathcal N}} \nc{\calo}{{\mathcal O}}
\nc{\calp}{{\mathcal P}} \nc{\calr}{{\mathcal R}}
\nc{\cals}{{\mathcal S}} \nc{\calt}{{\mathcal T}}
\nc{\calu}{{\mathcal U}} \nc{\calw}{{\mathcal W}} \nc{\calk}{{\mathcal K}}
\nc{\calx}{{\mathcal X}} \nc{\CA}{\mathcal{A}}
\nc{\fraka}{{\mathfrak a}} \nc{\frakA}{{\mathfrak A}}
\nc{\frakb}{{\mathfrak b}} \nc{\frakB}{{\mathfrak B}}
\nc{\frakc}{{\mathfrak c}}
\nc{\frakD}{{\mathfrak D}} \nc{\frakF}{\mathfrak{F}}
\nc{\frakf}{{\mathfrak f}} \nc{\frakg}{{\mathfrak g}}
\nc{\frakH}{{\mathfrak H}} \nc{\frakL}{{\mathfrak L}}
\nc{\frakM}{{\mathfrak M}} \nc{\bfrakM}{\overline{\frakM}}
\nc{\frakm}{{\mathfrak m}} \nc{\frakP}{{\mathfrak P}}
\nc{\frakN}{{\mathfrak N}} \nc{\frakp}{{\mathfrak p}}
\nc{\frakS}{{\mathfrak S}} \nc{\frakT}{\mathfrak{T}}
\nc{\frakX}{{\mathfrak X}}
\nc{\calT}{\mathcal{T}} \nc{\calF}{\mathcal{F}} \nc{\f}{\calF  }
\nc{\fn}{\calF^{n}} \nc{\fsn}{\calF^{\leq n}} \nc{\fsN}{\calF^{\leq N}}
\nc{\hck}{\mathcal{H}_{\rm CK}}
\nc{\hgl}{\mathcal{H}_{\rm GL}}
\nc{\RR}{\mathbb{R}} \nc{\ZZ}{\mathbb{Z}}  \nc{\etree}{\mathbbm{1}} \nc{\conc}{\cdot}
\nc{\hckn}{\hck^n} \nc{\hgln}{\hgl^n}
\nc{\hcksn}{\hck^{\leq n}} \nc{\hglsn}{\hgl^{\leq n}} \nc{\hcksN}{\hck^{\leq N-1}} \nc{\hglsN}{\hgl^{\leq N}}
\nc{\X}{{\bf X}} \nc{\brp}{{\bf BRP}_\alpha^{N}} \nc{\brpt}{{\bf BRP}_\alpha^{3}}
\nc{\cbrpx}{\mathcal{D}^{N}_{\X;\alpha }}
\nc{\cbrptx}{\mathcal{D}^{N}_{\tilde{X};\alpha }}
\nc{\cbrpxt}{\mathcal{D}^{3}_{\X;\alpha }}
\nc{\cbrpxs}{\mathcal{D}^{2}_{\X;\alpha }}
\nc{\cbrptxt}{\mathcal{D}^{3}_{\tilde{X};\alpha }}
\nc{\Y}{{\bf Y}} \nc{\Z}{{\bf Z}} \nc{\W}{{\mathcal{Y}}}
\nc{\h}{{\bf H}}  \nc{\U}{{\bf U}} \nc{\bfone}{{\bf 1}}
\nc{\con}[2]{\ell_{#1}(#2)} \nc{\cont}[3]{\ell_{#1}^{#3}(#2)}
\nc{\fan}[1]{|||#1|||}
\nc{\hcksth}{\hck^{\leq 3}} \nc{\hglsth}{\hgl^{\leq 3}}
\nc{\ckr}{\Delta_{{\rm CK}}^{{\rm red}}}
\nc{\R}{\resizebox{0.8\width}{0.8\height}}
\nc{\Int}{\displaystyle\int}
\nc{\hm}{\mathcal{H}_{\text {MKW}}}
\nc{\he}{\mathcal{H}_{\text {MKW}}^{g}}
\nc{\hmn}{\hm^n} \nc{\hen}{\mathcal{H}_{\text {MKW}}^{*n}}
\nc{\hmsn}{\hm^{\leq N}} \nc{\hesn}{\mathcal{H}_{\text {MKW}}^{*\leq N}} \nc{\hmsN}{\hm^{\leq N-1}} \nc{\hesN}{\mathcal{H}_{\text {MKW}}^{*\leq N}}
\nc{\pbrp}{{{\bf PBRP} }_\alpha^{N}}
\nc{\pbrps}{{{\bf PBRP}}_\alpha^{2}}
\nc{\pbrpt}{{{\bf PBRP}}_\alpha^{3}}
\nc{\hmsnd}{\hm^{\leq 1}} \nc{\hesnd}{\mathcal{H}_{\text {MKW}}^{*\leq 2}}
\nc{\hmsth}{\hm^{\leq 2}} \nc{\hesth}{\mathcal{H}_{\text {MKW}}^{*\leq 3}}
\nc{\EX}{\hat{\bf X}}
\nc{\hmr}{\Delta_{{\rm \text {MKW}}}^{{\rm red}}}
\nc{\hmnn}{\hm^n} \nc{\henn}{\mathcal{H}_{\text {MKW}}^{*n}}
\nc{\lbar}{\hat}
\nc{\cbar}{\overline}
\font\cyr=wncyr10 \font\cyrs=wncyr7
\nc{\xing}[1]{\textcolor{blue}{Xing:#1}}
\nc{\Dominique}[1]{\textcolor{blue}{Dominique: #1}}
\nc{\blue}[1]{\textcolor{blue}{#1}}
\nc{\revise}[1]{\textcolor{red}{#1}}
\nc{\red}[1]{\textcolor{red}{#1}}
\numberwithin{equation}{section}
\begin{document}
\title[It\^o formula for planarly branched rough paths]{It\^o formula for planarly branched rough paths}
%
%
\author{Nannan Li}
\address{School of Mathematics and Statistics, Lanzhou University
Lanzhou, 730000, China
}
\email{linn2024@lzu.edu.cn}

\author{Xing Gao$^{*}$}\thanks{*Corresponding author}
\address{School of Mathematics and Statistics, Lanzhou University
Lanzhou, 730000, China;
Gansu Provincial Research Center for Basic Disciplines of Mathematics
and Statistics, Lanzhou, 730070, China
}
\email{gaoxing@lzu.edu.cn}

\date{\today}
\begin{abstract}
The It\^o formula, originated by K. It\^o, is focus on the stochastic calculus, where many stochastic processes can be placed under the framework of rough paths.
In rough path theory, It\^o formulas have been proved for rough paths with roughness $\frac{1}{3}< \alpha \leq \frac{1}{2}$ and branched rough paths with roughness $0< \alpha \leq 1$.
Planarly branched rough paths contain more random processes than rough paths and branched rough paths. In the present paper, we prove the It\^o formula for planarly branched rough paths with roughness $\frac{1}{4}< \alpha \leq \frac{1}{2}$.
\end{abstract}

\makeatletter
\@namedef{subjclassname@2020}{\textup{2020} Mathematics Subject Classification}
\makeatother
\subjclass[2020]{
60L20, 
60L50, 
60H99, 
34K50, 
37H10,  
05C05.   
}

\keywords{It\^o formula, planarly branched rough path, controlled planarly branched rough path.}

\maketitle

\tableofcontents

\setcounter{section}{0}
\allowdisplaybreaks

\section{Introduction}
In this paper, the It\^o formula is generalized from rough paths and
branched rough paths to planarly branched rough paths with roughness $\frac{1}{4}< \alpha \leq \frac{1}{2}$.

\subsection{Rough paths and controlled rough paths}
Rough path theory~\cite{L98} is concerned with the rough differential equation (RDE) of form
\begin{equation}
\left\{
\begin{array}{rll}
dY_{t}&={f}(Y_{t})\cdot dX_t=\sum\limits_{i=1}^{d}{f_i}(Y_{t}) dX_{t}^{i}, \quad \forall t\in [0, T],\\
Y_0 &= \xi  ,
   \end{array}\right.
\mlabel{ivp}
\end{equation}
where $X=(X^1,\ldots, X^d):[0,T]\to \mathbb R^d$ is of $\alpha$-H\"{o}lder regularity for $0<\alpha\le 1$, and $f_1,\ldots, f_d$ are smooth vector fields on $\mathbb R^n$. It is evident that the interpretation of the integral of~\eqref{ivp} determines the solution Y, and this integral can be elegantly encoded using the notion of rough path. So rather than considering \eqref{ivp} to be an equation that X controls, one should recast it as
\begin{equation}
\left\{
\begin{array}{rll}
d\Y_{t}&={f}(\Y_{t})\cdot dX_t=\sum\limits_{i=1}^{d}{f_i}(\Y_{t}) dX_{t}^{i}, \quad \forall t\in [0, T],\\
Y_0 &= \xi  ,
   \end{array}\right.
\mlabel{ivpp}
\end{equation}
where  $\X$ is a rough path above $X$ and $\Y=(\mathbf Y^1,\ldots ,\mathbf Y^n)$ is an $\X$-controlled rough path. A two-parameter path $\mathbf X$ is called a rough path if $\X$ satisfies Chen's lemma $\mathbf X_{st}=\mathbf X_{su}\star\mathbf X_{ut}$ and suitable estimateson on the shuffle Hopf algebra.

M. Gubinelli~\cite{Gub10} initiated branched rough paths in the same way with T. Lyons' rough paths, except that the Butcher-Connes-Kreimer (BCK) Hopf algebra of rooted forests takes the place of the shuffle Hopf algebra of words.
The concept of planarly branched rough path~\cite{CFMK,KL23}, which was used to solve RODEs on homogeneous spaces~\cite{CFMK}, is obtained by replacing the BCK Hopf algebra of rooted forests by the Munthe-Kaas-Wright (MKW) Hopf algebra of planarly rooted forests. The MKW Hopf algebra is the Hopf algebra of Lie group integrators introduced by H. Munthe-Kaas and W. Wright in~\cite{MKW}.
The MKW (resp. BCK, resp. shuffle) Hopf algebra is the foundation of planarly branched rough paths (resp. branched rough paths, resp. rough paths), and the primal elements of its graded dual Hopf algebra constitute a free post-Lie (resp. pre-Lie, resp. Lie) algebra. Therefore, in the sense that post-Lie algebras are generalizations of both pre-Lie algebras and Lie algebras, a planarly branched rough paths are generalizations of both branched rough paths and rough paths. In the language of stochastic calculus, the framework of planarly branched rough paths contain more random processes, compared to rough paths and branched rough paths.

M. Gubinelli~\cite{Gub04} first introduced controlled rough paths, in the shuffle version~\cite{Gub04} as well as in the branched version~\cite{Gub10}. While the planarly branched version was given in~\cite{GLM24}.
Namely, an $\mathbf X$-controlled rough path is a map $\mathbf Y$ from $[0,T]$ into the (suitably truncated) Hopf algebra, such that
\[\langle \tau, \mathbf Y_t^i\rangle=\langle \mathbf X_{st}\star \tau,\mathbf Y^i_s\rangle + \hbox{ \sl small \rm remainder}.\]

\subsection{It\^o formula for rough paths}
It\^o formula~\cite{Ito} serves as the foundation for stochastic analysis and connects stochastic processes and partial differential equations. For a second order differentiable continuous function $f$, It\^o initiated the It\^o formula~\cite{Ito}:
\begin{equation}\mlabel{kvp}
f (X_t)= f (X_s)+\int_s^t f'(X_r)\, dX_r+\frac{1}{2}\int_s^t f''(X_r)\, dr,
\end{equation}
where $X$ is a real-valued Brownian motion and the integral $\int f'(X_r)\, dX_r$ is the It\^o integral. The It\^o formula has undergone numerous extensions since its introduction, most notably to semi-martingales. Attempts have been made to apply the formula to more ad hoc scenarios, such as stochastic processes for which there is no sufficient calculus. These include 4-stable procceses~\cite{BM96}, fractional Brownian motion~\cite{GNRV05,GRV03}, finite $p$-variation processes~\cite{ER03} and solutions to stochastic partial differential equations~\cite{BS10,HN21}.

Since every stochastic process X has a H\"{o}lder exponent $\alpha>0$, all of these instances fit neatly into the rough path framework.
P. K. Friz and M. Hairer~\cite{FH20} built the It\^o formula for rough paths with roughness $\frac{1}{3}<\alpha\le \frac{1}{2} $. Afterwards,  D. Kelly proved the It\^o formula for branched rough paths with roughness $0<\alpha\le 1$ in~\cite{Kel}. In~\cite{FZ18}, P. k. Friz and H. Zhang gave the It\^o formula for rough paths with jump and roughness $\frac{1}{3}< \alpha\le \frac{1}{2}$.

\subsection{Munthe-Kaas-Wright Hopf algebra and results of the paper}
From their conventional use in Euclidean domains, classical numerical integration techniques for ordinary differential equations have a wide range of applications~\cite{CR93}.
Particularly, Butcher's B-series has been generalized into the idea of Lie-Butcher series~\cite{MK95} as a result of developments in Lie group techniques~\cite{IMK00}.
A thorough mathematical grasp of the theory of B-series inherently incorporates the theory of combinatorial Hopf (BCK Hopf algebra) and the theory of pre-Lie algebras defined on non-planar rooted trees, as demonstrated by the groundbreaking studies of Connes, Kreimer~\cite{CK98} and Brouder~\cite{B00}.
Guin and Oudom's work~\cite{OG08} further highlights the complex relationship between the algebraic framework of B-series and the features of non-planar rooted trees by demonstrating that how the free pre-Lie algebra may be used to derive the BCK Hopf algebra.

On the other hand, the theory of Lie-Butcher series necessitates a shift from non-planar rooted trees to planar counterparts.
In this transition, the free post-Lie algebra defined on planar rooted trees~\cite{MKL13} was used in place of the free pre-Lie algebra on non-planar trees.
It is possible to derive the MKW Hopf algebra by applying the Guin-Oudom result to post-Lie algebras~\cite{AFM, BK, ELM15}.
In~\cite{ER25}, Ebrahimi-Fard and Rahm summarized the important role played by the MKW Hopf algebra in the context of planarly branched rough paths. There they also investigated the geometric embedding for planar regularity structures. Quite recently, we proved the universal limit theorem in the framework of planarly branched rough paths with roughness $\frac{1}{4}< \alpha \leq \frac{1}{3}$ in~\cite{GLM24}.

In this paper, we detail respectively It\^o formulas for planarly branched rough paths with roughness $\frac{1}{3}< \alpha \leq \frac{1}{2}$ and $\frac{1}{4}< \alpha \leq \frac{1}{3}$, corresponding to truncating at $N=2$ and $N=3$.
As far as we are aware, it is the first work on this direction combining the It\^o formula and planarly branched rough paths.
The key difficulties for further work of roughness $\alpha\leq \frac{1}{4}$ are
\begin{enumerate}
\item to prove that compositions of smooth functions with controlled planarly branched rough paths are controlled, and


\item to prove that there is a unique solution of the RDE~(\mref{ivp}) in the sense of Definition~\mref{defn:bRDE} below,
\end{enumerate}
which will be considered in the future work. \vskip 0.1in

{\bf Outline.} The following is how the current paper is structured. In Section~\ref{ss:sec2}, we first recall the notions of planarly branched rough path, controlled planarly branched rough path  and rough integral of a controlled planarly branched rough against a planarly branched rough path.
Then we obtain new controlled planarly branched rough paths via compositions of smooth functions
with controlled planarly branched rough paths  (Propositions~\ref{pp:regu1} and~\ref{pp:regu2}).
Finally, we obtain a kind of Taylor expansion based on planarly branched rough paths (Proposition~\ref{pp:RDE1}).
The goal of Section~\ref{sec:3} is to prove respectively It$\mathrm{\hat{o}} $ formulas for $\alpha$-H\"{o}lder planarly branched rough paths with roughness $\alpha\in (\frac{1}{3}, \frac{1}{2}]$ and $\alpha\in (\frac{1}{4}, \frac{1}{3}]$, both of which are divided into the simple case $F(X)$ (Theorems~\mref{thm:ito1} and~\mref{thm:ito2}) and the general case $F(Y)$ (Theorems~\mref{thm:yitoa} and~\mref{thm:yitob}).
 \vskip 0.1in

{\bf Notation.} Throughout this paper, we fix two positive integers $d$ and $n$. The first integer is the number of vector fields driving~\eqref{ivp}, and the second integer is the dimension of the vector space in which the solutions of \eqref{ivp} take their values. Let $A:=\{1, \ldots, d\}$ be a finite set used as a decorated set. Denote by $\RR$ the field of real numbers, which will serve as the base field of all vector spaces, algebras, tensor product, as well as linear maps.
For any map $Y:[0,T]\rightarrow \RR^n$, we define $\delta Y_{s,t}:=Y_t-Y_s $. Let $o(|t-s|)$ (resp. $O(|t-s|)$) be infinitesimal of higher (resp. higher or the same) order of $|t-s|$.

\section{Rough integrals of planarly branched rough paths}\label{ss:sec2}
In this section, we first recall the main concepts studied in this paper, including planarly branched rough path, controlled planarly branched rough path, as well as rough integral of a controlled planarly branched rough against a planarly branched rough path.
Then we obtain new controlled planarly branched rough paths in terms of compositions of smooth functions, and a kind of Taylor expansion based on planarly branched rough paths.

\subsection{Planarly branched rough paths and controlled planarly branched rough paths}\label{sec:2.1}
Much in this subsection traces its roots to~\cite{CFMK, GLM24}. Denote by $\calT$ (resp. $\f$) the set of $A$-decorated planarly rooted trees (resp. forests).
For $\tau\in \f$, define $|\tau|$ to be the number of vertices of $\tau$.
There is a unique forest $\etree$ with $|\etree|=0$, called the empty forest.
Denote respectively by
\begin{align*}
\hm:=&\ (\mathbb{R} \calF, \shuffle, \etree, \Delta_{\text {MKW}},\etree^*)=\bigoplus_{k\geq 0} \mathcal{H}_{\text {MKW}}^k,\\
\he:=&\ (\mathbb{R} \f,\star, \etree, \Delta_{\shuffle},\etree^*)= \bigoplus_{k\geq 0}
(\mathcal{H}_{\text {MKW}}^{k})^*,
\end{align*}
the MKW graded Hopf algebra~\cite{MKW} and its graded dual Hopf algebra under the pairing
$
\langle\tau,\tau'\rangle=\delta_\tau^{\tau'},
$
where $\shuffle$ is the shuffle product, $\delta $ is the Kronecker delta and
$${\mathcal{F}}^{k}:=\{\tau\in \f \mid |\tau| = k \}, \quad \mathcal{H}_{\text {MKW}}^k:= \mathbb{R} {\mathcal{F}}^{k}=: (\mathcal{H}_{\text {MKW}}^{k})^*, \quad \forall k, N\geq 0.$$
Here since $\dim \mathcal{H}_{\text {MKW}}^{k}$ is finite, we can view ${\mathcal{F}}^{k}$ as a basis of $(\mathcal{H}_{\text {MKW}}^{k})^*$. Set $$\mathcal{T}^{k}:=\mathcal{F}^{k}\cap \mathcal{T},\quad {\mathcal{F}}^{\le N}:=\{\tau\in \f \mid |\tau| \leq N \}, \quad {\mathcal{T}}^{\le N}:={\mathcal{F}}^{\le N}\cap \mathcal{T}, \quad \forall k, N\geq 0.$$
The coproduct $\Delta_{\text {MKW}}$ and the multiplication $\star$ are characterized by the left admissible cut and the left grafting, respectively. See~\cite[Section~3]{ER25} and~\cite[Section~3.1]{MKW} for details.
Both $$\hmsn := \bigoplus_{0\leq k\leq N} \hm^k, \quad (\mathcal{H}_{\text {MKW}}^{g})^{\le N} := \bigoplus_{0\leq k\leq N} (\mathcal{H}_{\text {MKW}}^{k})^*$$ are endowed with a structure of connected graded (and finite-dimensional) algebra and  coalgebra as follows: the vector subspace $I_N:= \RR\{\tau\in \f \mid |\tau| > N \}$ of $\hm$ is a graded ideal (but not a bi-ideal), hence $\hm/I_N$ is a graded algebra.
On the other hand, the restriction of the projection $\pi_N:\hm\to\hskip -9pt\to\hm/I_N$ to the subcoalgebra $\hmsn$ is an isomorphism of graded vector spaces.
The graded algebra structure of $\hm/I_N$ can therefore be transported on $\hmsn$, making  it both an algebra and a coalgebra, denoted by
$$\hmsn:= ( \mathbb{R} \fsN, \shuffle, \etree, \Delta_{\text {MKW}},\etree^*)$$
by a slight abuse of notations. Since $\dim \hmsn$ is finite, we also have the dual algebra/coalgebra
$$(\mathcal{H}_{\text {MKW}}^{g})^{\le N} := (\mathcal{H}_{\text {MKW}}^{\le N})^g=(\mathbb{R} \fsN,\star, \etree^*, \Delta_{\shuffle},\etree)$$ under the above pairing. The following is
 the concept of planarly branched rough path.

\begin{definition}~\cite[Definition 4.1]{CFMK}
Let $\alpha \in(0,1]$ and $N= \lfloor \frac{1}{\alpha} \rfloor$. An {\bf $\alpha$-H\"{o}lder planarly branched rough path} is a map $\X:[0,T]^2\rightarrow (\mathcal{H}_{\text {MKW}}^{g})^{\le N}$ such that
\begin{enumerate}
\item  $\X_{s,u} \star \X_{u,t} =\X_{s,t}$,\mlabel{it:bitem1}

\item $\langle \X_{s,t}, \sigma \shuffle \tau\rangle  = \langle \X_{s,t}, \sigma\rangle\langle \X_{s,t}, \tau\rangle $,\mlabel{it:bitem2}

\item $\sup\limits_{s\ne t\in [ 0,T ] } \frac{| \left \langle \X_{s,t}, \tau \right \rangle  |}{| t-s | ^{| \tau |\alpha  } } <\infty$,\quad $\forall \sigma,\tau\in \fsN$ and $s, t, u\in \left[ 0, T\right] $. \mlabel{it:bitem3}
\end{enumerate}
Further for a path $X=(X^1,\ \ldots,X^d):[0,T]\rightarrow \mathbb{R}^d$, if
\begin{equation}
\langle \X_{s,t} , \bullet_i \rangle=X_{t}^i - X_{s}^i, \quad \forall i=1,\ldots, d,
\mlabel{eq:above}
\end{equation}
then we call $\X$ a planarly branched rough path above $X$.
\mlabel{def:pbrp}
\end{definition}

Let $\pbrp $ be the set of $\alpha$-H\"{o}lder planarly branched rough paths. Next, we recall the concept of controlled planarly branched rough path.

\begin{definition}~\cite[Definition 2.2]{GLM24}
Let $\X\in\pbrp $. A path $\Y: [0,T]\rightarrow \hmsN $ is called an  {\bf $\X$-controlled planarly branched rough path} if \begin{equation}
|R\Y^{\tau}_{s,t}|=O(|t-s|^{(N-|\tau|)\alpha}),
\mlabel{eq:remainry0}
\end{equation}
where
\begin{equation}
R\Y^{\tau}_{s,t}:= \left \langle \tau,\Y_t  \right \rangle- \left \langle \X_{s,t}\star \tau,\Y_s \right \rangle,\quad \forall \tau\in \mathcal{F}^{\leq (N-1)}.
\mlabel{eq:remainry}
\end{equation}
Further for a path $Y:[0,T]\rightarrow \mathbb{R}$, if
$\langle \etree, \Y_{t} \rangle=Y_{t},$
then we call $\Y$ above $Y$.
\mlabel{defn:cbrp}
\end{definition}

\begin{remark}
One can adapt Definition~\ref{defn:cbrp} component-wisely to
$$\Y=(\Y^1, \ldots, \Y^n): [0,T]\rightarrow (\hmsN)^n,\quad t\mapsto \Y_t=(\Y^1_t, \ldots, \Y^n_t), $$
where $(\hmsN)^n$ denotes the $n$-th cartesian power of $\hmsN$. Notice that
\begin{equation*}
\langle \tau, \Y_{t} \rangle\in \RR^n, \quad \forall \tau\in \mathcal{F}^{\leq (N-1)}.
\end{equation*}
For $\X\in \pbrp$, $\X$-controlled planarly branched rough paths form a space~\mcite{GLM24}, denoted by $\cbrpx$.
\mlabel{rk:gcont}
\end{remark}

\subsection{Rough integrals}\label{sec:2.2}
We are going to recall the concept of rough integral, based on planarly branched rough path and divided into the cases of $\alpha \in  ( \frac{1}{4}, \frac{1}{3}]$ and $\alpha \in  ( \frac{1}{3}, \frac{1}{2}]$.
{\em In the sequel} of the paper, we always let
\begin{equation*}
X=(X^1,\ldots,X^d):[0,T]\rightarrow \mathbb{R}^d,\quad Y:[0,T]\rightarrow \mathbb{R}^n,\quad f=(f_1,\ldots,f_d):\mathbb{R}^n \rightarrow  (\mathbb{R}^n)^d,
\end{equation*}
with each $f_i:\mathbb{R}^n \rightarrow\mathbb{R}^n $ smooth.
Notice that $\mathcal{F}^{\leq 1} =\{\etree, \bullet_i \mid i\in A \}$ and
\begin{equation}
\mathcal{F}^{\leq 2} =\{\etree, \bullet_i, \bullet_i\bullet_j, \tddeuxa{$i$}{$j$}\ \,\mid i,j\in A \}, \quad \mathcal{F}^{\leq 3}=\{\etree, \bullet_i, \bullet_i\bullet_j, \tddeuxa{$i$}{$j$}\ \,, \bullet_i\tddeuxa{$j$}{$k$}\ \,, \tddeuxa{$i$}{$j$}\ \,\bullet_k, \bullet_i\bullet_j\bullet_k, \tdtroisuna{$i$}{$j$}{$k$}\ \,, \tdddeuxa{$i$}{$j$}{$k$}\ \, \mid i,j,k\in A\}.
\mlabel{eq:leq23}
\end{equation}
\vspace{0.1pt}

\begin{lemma}~\cite[Theorem 3.2]{GLM24}
Let $\alpha \in  ( \frac{1}{4}, \frac{1}{3}]$ and $N=3$. Let $\X\in \pbrp$ above $X$ and $\Y\in \cbrpx$ above Y. Define	
\begin{equation*}
\tilde{Y}_{s,t} :=\sum_{\tau\in \mathcal{F}^{\leq 2}}\langle \tau, \Y_{s} \rangle\langle \X_{s,t}, [\tau]_i \rangle.
\end{equation*}
Then there is a unique function I denoted by $\int_{0}^{\cdot} Y_rdX_r^i$ such that
\begin{equation}
\Big|\R{$\Int_s^t$} Y_rdX_{r}^{i}-\sum_{\tau\in \mathcal{F}^{\leq 2}}\langle \tau, \Y_{s} \rangle\langle \X_{s,t}, [\tau]_i \rangle\Big|= O(| t-s | ^{4\alpha} )
\mlabel{eq:inte1-}
\end{equation}
and
\begin{equation}
I_t:=\R{$\Int_0^t$} Y_rdX_{r}^{i}=\lim_{|\pi | \to 0} \sum_{[t_i, t_{i+1}]\in \pi}\sum_{\tau\in \mathcal{F}^{\leq 2}}\langle \tau, \Y_{t_i} \rangle\langle \X_{t_i,t_{i+1}}, [\tau]_i \rangle \in \RR^n,
\mlabel{eq:intee1}
\end{equation}
where $\pi$ is an arbitrary partition of \,$[0, T]$. We call $\R{$\Int_0^t$} Y_rdX_{r}^{i}$ the {\bf rough integral} of $Y$ against $X^i$.
\mlabel{thm:inte1}
\end{lemma}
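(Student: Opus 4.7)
The plan is to apply the sewing lemma of Gubinelli--Lyons to the two-parameter map $\mu_{s,t} := \tilde Y_{s,t} = \sum_{\tau \in \mathcal F^{\le 2}} \langle \tau, \Y_s\rangle \langle \X_{s,t}, [\tau]_i\rangle$. Recall that the sewing lemma produces a unique function $I$ with $|I_t-I_s-\mu_{s,t}| = O(|t-s|^\theta)$ together with the Riemann-sum representation $I_t = \lim_{|\pi|\to 0}\sum_{[u,v]\in\pi}\mu_{u,v}$ provided $\mu$ is H\"older in its second argument and the coboundary $\delta\mu_{s,u,t}:=\mu_{s,t}-\mu_{s,u}-\mu_{u,t}$ satisfies $|\delta\mu_{s,u,t}| = O(|t-s|^\theta)$ for some $\theta>1$. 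Here the target exponent is $\theta=4\alpha>1$, which is available precisely because $\alpha>\tfrac14$. So the goal reduces to two regularity checks on $\mu$.

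The H\"older bound is straightforward: since $[\tau]_i\in\mathcal F^{\le 3}$ with $|[\tau]_i|=|\tau|+1\in\{1,2,3\}$, Definition~\ref{def:pbrp}(c) gives $|\langle \X_{s,t}, [\tau]_i\rangle|=O(|t-s|^{(|\tau|+1)\alpha})$ and $|\langle\tau,\Y_s\rangle|$ is bounded on $[0,T]$, so $|\mu_{s,t}|=O(|t-s|^\alpha)$. The combinatorial heart is the estimate on $\delta\mu_{s,u,t}$. Using Chen's identity $\X_{s,t}=\X_{s,u}\star\X_{u,t}$ dualised through $\Delta_{\mathrm{MKW}}$, one expands $\langle\X_{s,t}, [\tau]_i\rangle = \sum_{([\tau]_i)}\langle\X_{s,u}, [\tau]_i^{(1)}\rangle\langle\X_{u,t}, [\tau]_i^{(2)}\rangle$ for the coproduct $\Delta_{\mathrm{MKW}}([\tau]_i) = \sum [\tau]_i^{(1)}\otimes [\tau]_i^{(2)}$. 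Substituting the controlled-path expansion $\langle\sigma,\Y_u\rangle = \langle\X_{s,u}\star\sigma,\Y_s\rangle + R\Y^\sigma_{s,u}$ in $\mu_{u,t}$ reconstructs, via duality between $\star$ and $\Delta_{\mathrm{MKW}}$, exactly those summands in $\mu_{s,t}$ that correspond to cuts of $[\tau]_i$ whose left piece lies in $\mathcal F^{\le 2}$. The surviving discrepancies are of two types: products of a remainder $R\Y^\sigma_{s,u}$, of order $(3-|\sigma|)\alpha$, with a factor $\langle\X_{u,t}, [\sigma]_i\rangle$, of order $(|\sigma|+1)\alpha$, whose combined exponent is $4\alpha$; and contributions from cuts in $\Delta_{\mathrm{MKW}}([\tau]_i)$ whose left piece has more than two vertices, each of which already carries a factor of H\"older order at least $4\alpha$.

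The main obstacle will be the precise bookkeeping of the coproduct $\Delta_{\mathrm{MKW}}([\tau]_i)$: the left-admissible-cut description of $\Delta_{\mathrm{MKW}}$ on a tree obtained by left-grafting over an $i$-decorated root must be matched with the dualised left-grafting product $\star$ so that every cancellation is identified. In particular the character identity $\langle\X_{s,t},\sigma\shuffle\tau\rangle=\langle\X_{s,t},\sigma\rangle\langle\X_{s,t},\tau\rangle$ of Definition~\ref{def:pbrp}(b) is needed to match products of values of $\Y$ with values of $\X$ on shuffle products arising from $[\sigma]_i\star\cdot$ terms. Once the estimate $|\delta\mu_{s,u,t}|=O(|t-s|^{4\alpha})$ is in hand, the sewing lemma yields existence, \eqref{eq:inte1-}, and \eqref{eq:intee1} at once, and uniqueness follows because any two candidates $I,I'$ satisfying \eqref{eq:inte1-} would differ by a continuous function with increments $o(|t-s|)$, hence a constant, fixed by $I_0=I'_0=0$.
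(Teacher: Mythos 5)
The paper offers no proof of this lemma at all — it is imported wholesale from \cite[Theorem~3.2]{GLM24} — so there is no in-text argument to compare against, only the cited reference. Your sewing-lemma plan (apply the Gubinelli sewing lemma to $\mu_{s,t}=\tilde Y_{s,t}$, using Definition~\ref{def:pbrp}(c) for the local H\"older bound, and Chen's identity dualised through $\Delta_{\mathrm{MKW}}$ together with the controlled-path remainder \eqref{eq:remainry} to show $\delta\mu_{s,u,t}=O(|t-s|^{4\alpha})$ with $4\alpha>1$) is exactly the standard argument and is what that reference carries out; the exponent count $(3-|\sigma|)\alpha + (|\sigma|+1)\alpha = 4\alpha$ and the uniqueness argument are both correct, with the only remaining work being the coproduct bookkeeping you explicitly flag.
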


For the case of $\alpha \in  ( \frac{1}{3}, \frac{1}{2}]$, we have
\begin{lemma}
Let $\alpha \in  ( \frac{1}{3}, \frac{1}{2}]$ and $N=2 $. Let $\X\in \pbrps$ above $X$ and $\Y\in \cbrpxs$ above Y. Define	
\begin{equation*}
\tilde{Y}_{s,t} :=\sum_{\tau\in \mathcal{F}^{\leq 1}}\langle \tau, \Y_{s} \rangle\langle \X_{s,t}, [\tau]_i \rangle.
\end{equation*}
Then there is a unique function I denoted by $\int_{0}^{\cdot} Y_rdX_r^i$ such that
\begin{equation}
\Big|\R{$\Int_s^t$} Y_rdX_{r}^{i}-\sum_{\tau\in \mathcal{F}^{\leq 1}}\langle \tau, \Y_{s} \rangle\langle \X_{s,t}, [\tau]_i \rangle\Big|= O(| t-s | ^{3\alpha} )
\mlabel{eq:inte2-}
\end{equation}
and
\begin{equation}
I_t:=\R{$\Int_0^t$} Y_rdX_{r}^{i}=\lim_{|\pi | \to 0} \sum_{[t_i, t_{i+1}]\in \pi}\sum_{\tau\in \mathcal{F}^{\leq 1}}\langle \tau, \Y_{t_i} \rangle\langle \X_{t_i,t_{i+1}}, [\tau]_i \rangle \in \RR^n,
\mlabel{eq:inte4}
\end{equation}
where $\pi$ is an arbitrary partition of $[0, T]$. We call $\R{$\Int_0^t$} Y_rdX_{r}^{i}$ the {\bf rough integral} of $Y$ against $X^i$.
\mlabel{thm:inte2}
\end{lemma}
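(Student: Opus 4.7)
The plan is to apply Gubinelli's sewing lemma to the two-parameter map $\tilde Y_{s,t}$. The sewing lemma requires two ingredients: first that $\tilde Y_{s,t}$ is Hölder of positive exponent, and second that the multiplicative defect
\[
\delta \tilde Y_{s,u,t} := \tilde Y_{s,t} - \tilde Y_{s,u} - \tilde Y_{u,t}
\]
obeys $|\delta \tilde Y_{s,u,t}| = O(|t-s|^\theta)$ for some $\theta>1$. Because $\alpha>\tfrac13$, the choice $\theta = 3\alpha$ is admissible and will produce exactly the remainder claimed in \eqref{eq:inte2-}, while the sewing lemma's existence statement will identify the integral with the Riemann-type limit in \eqref{eq:inte4}.

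First I would verify $|\tilde Y_{s,t}| = O(|t-s|^\alpha)$. Since $\mathcal F^{\le 1}= \{\etree,\bullet_j \mid j\in A\}$, every summand in the definition of $\tilde Y_{s,t}$ has the form $\langle \tau,\mathbf Y_s\rangle\,\langle \mathbf X_{s,t},[\tau]_i\rangle$ with $[\tau]_i$ of size $1$ or $2$. The first factor is bounded in $s$, while by Definition~\ref{def:pbrp}(\ref{it:bitem3}) the second is $O(|t-s|^{|[\tau]_i|\alpha})=O(|t-s|^\alpha)$ at worst, as required.

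The heart of the argument is the bound on $\delta \tilde Y$. I would substitute the controlled-path relation \eqref{eq:remainry}, namely $\langle \tau,\mathbf Y_u\rangle = \langle \mathbf X_{s,u}\star\tau,\mathbf Y_s\rangle + R\mathbf Y^\tau_{s,u}$, into $\tilde Y_{u,t}$, and then use Chen's multiplicativity $\mathbf X_{s,t} = \mathbf X_{s,u}\star\mathbf X_{u,t}$ paired against $[\tau]_i$ to expand $\tilde Y_{s,t}$. The principal polynomial in $\mathbf Y_s$ arising from $\tilde Y_{s,t}$ matches, degree by degree, the analogous contributions from $\tilde Y_{s,u}+\tilde Y_{u,t}$, and these cancel. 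What survives is a sum of remainder terms
\[
\sum_{\tau\in\mathcal F^{\le 1}} R\mathbf Y^\tau_{s,u}\,\langle \mathbf X_{u,t},[\tau]_i\rangle,
\]
and by Definition~\ref{defn:cbrp} together with Definition~\ref{def:pbrp}(\ref{it:bitem3}) each summand is bounded by $|u-s|^{(2-|\tau|)\alpha}\,|t-u|^{|[\tau]_i|\alpha}$. Because $|[\tau]_i|=|\tau|+1$, the exponents always add to $3\alpha$, giving $|\delta \tilde Y_{s,u,t}| = O(|t-s|^{3\alpha})$.

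With both sewing hypotheses in place, the sewing lemma yields a unique continuous $I:[0,T]\to\RR^n$ with $I_0=0$ and $|I_t-I_s-\tilde Y_{s,t}|=O(|t-s|^{3\alpha})$, and exhibits $I_t$ as the limit of the compensated Riemann sums \eqref{eq:inte4}. Uniqueness follows from the sewing lemma, since any two candidates would differ by a continuous path whose increments are $O(|t-s|^{3\alpha})$ with $3\alpha>1$, hence constant. The main obstacle is the bookkeeping in the cancellation step: one must track how the truncated coproduct $\Delta_{\text{MKW}}([\tau]_i)$ for $\tau\in\mathcal F^{\le 1}$ pairs with $\mathbf X_{s,u}\otimes\mathbf X_{u,t}$ and with $\mathbf Y_s$, so that the principal terms truly cancel and only genuine remainders survive. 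This is a strict simplification of the analogous $N=3$ computation already carried out in Lemma~\ref{thm:inte1}, since here only trees of size at most $2$ appear, and the coproduct on $\mathcal F^{\le 2}$ is short enough to write explicitly.
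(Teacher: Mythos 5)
Your proposal is correct and follows the same route the paper takes: Gubinelli's sewing lemma applied to $\tilde Y_{s,t}$, with the coherence defect reduced, via Chen's identity and the controlled-path relation \eqref{eq:remainry}, to $-\sum_{\tau\in\mathcal F^{\le 1}} R\mathbf Y^\tau_{s,u}\langle\mathbf X_{u,t},[\tau]_i\rangle$, whose terms are each $O(|t-s|^{3\alpha})$ since $(N-|\tau|)+|[\tau]_i|=3$; this is precisely the $N=2$ specialization of the sewing-lemma argument of Lemma~\ref{thm:inte1} (taken from [GLM24, Theorem 3.2]), which the paper itself simply points to. The only minor slip is a dropped minus sign on the surviving remainder sum, which does not affect the estimate.
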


\begin{proof}
The proof is similar and easier to the one of Lemma~\ref{thm:inte1}.
\end{proof}

The following result gives a map which sends a controlled planarly branched rough path $\Y$ to another one $\R{$\Int_0^\bullet$} \Y_rdX_{r}^{i}$.

\begin{lemma}
Let $\alpha \in (\frac{1}{4}, \frac{1}{2}]$ and $N= \lfloor \frac{1}{\alpha} \rfloor$. Let $\X\in \pbrp$ above $X$ and $\Y\in \cbrpx$ above Y. Define
\begin{equation*}
\R{$\Int_0^\bullet$} \Y_rdX_{r}^{i}:[0,T]\longrightarrow (\mathcal{H}_{\mathrm{MKW}}^{\le N-1})^n, \quad t\mapsto \R{$\Int_0^t$} \Y_rdX_{r}^{i}
\end{equation*}
by setting
\begin{equation}
\begin{aligned}
&\ \langle\etree,\R{$\Int_0^t$}\Y_{r}dX_{r}^{i}\rangle:=\R{$\Int_0^t$}Y_{r}dX_{r}^{i}, \quad
\langle [\tau]_{i},\R{$\Int_0^t$}\Y_{r} dX_{r}^{i}\rangle:=\ \langle \tau,\Y_{t}\rangle, \quad \forall \tau\in \mathcal{F}^{\leq N-2},\\
&\ \langle \tau,\R{$\Int_0^t$}\Y_{r} dX_{r}^{i}\rangle:= 0,\quad \text{otherwise.}
\end{aligned}
\mlabel{eq:intek1}
\end{equation}
Then $\R{$\Int_0^\bullet$} \Y_rdX_{r}^{i}$ is an $\X$-controlled planarly branched rough path above $\R{$\Int_0^\bullet$}Y_{r}dX_{r}^{i} $.
\mlabel{lem:inte1}
\end{lemma}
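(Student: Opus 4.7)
The plan is to verify that the path $\mathbf{Z} := \R{$\Int_0^\bullet$}\Y_r dX_r^i$, defined coordinate-wise by~(\ref{eq:intek1}), satisfies Definition~\ref{defn:cbrp} after embedding $\mathbf{Z}_t \in \hm^{\leq N-1}$ into $\hmsn$ with vanishing level-$N$ component. The condition that $\mathbf{Z}$ lies \emph{above} $\R{$\Int_0^\bullet$} Y_r dX_r^i$ is built into the definition, so the only substantive task is to establish the remainder estimate
\[
|R\mathbf{Z}^\sigma_{s,t}| = \bigl|\langle \sigma, \mathbf{Z}_t \rangle - \langle \X_{s,t}\star \sigma, \mathbf{Z}_s \rangle\bigr| = O\bigl(|t-s|^{(N-|\sigma|)\alpha}\bigr)
\]
for every $\sigma \in \mathcal{F}^{\leq N-1}$.

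The strategy exploits two structural facts. First, the basis expansion of $\mathbf{Z}_s$ is supported only on $\etree$ and on trees $[\tau]_i$ with $\tau \in \mathcal{F}^{\leq N-2}$. Second, any admissible cut of a rooted tree $[\tau]_i$ decomposes it into a right-hand piece that is either $\etree$ or again a rooted tree with root labelled $i$, which in the MKW setting yields a Hochschild-type identity
\[
\Delta_{\text{MKW}}[\tau]_i = [\tau]_i \otimes \etree + (\mathrm{id} \otimes B_i^+)\,\Delta_{\text{MKW}}\tau, \qquad B_i^+(\rho) := [\rho]_i.
\]
Combining this with the duality $\langle \X_{s,t}\star \sigma, \mathbf{Z}_s \rangle = \langle \X_{s,t} \otimes \sigma, \Delta_{\text{MKW}} \mathbf{Z}_s \rangle$ reduces the analysis to three disjoint cases. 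When $\sigma = \etree$, the pairing collapses to $\R{$\Int_s^t$} Y_r dX_r^i - \sum_{\tau \in \mathcal{F}^{\leq N-2}} \langle \tau, \Y_s\rangle \langle \X_{s,t}, [\tau]_i\rangle$; completing the sum up to $\mathcal{F}^{\leq N-1}$ and applying Lemma~\ref{thm:inte1} or~\ref{thm:inte2} gives a remainder of order $O(|t-s|^{(N+1)\alpha})$, while the added-and-subtracted gap over $\mathcal{F}^{N-1}$ contributes $O(|t-s|^{N\alpha})$ by item~(\ref{it:bitem3}) of Definition~\ref{def:pbrp}, matching the required $N\alpha$. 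When $\sigma = [\rho]_i$ with $\rho \in \mathcal{F}^{\leq N-2}$, the Hochschild identity collapses the pairing to $\langle \X_{s,t}\star \rho, \Y_s\rangle$ modulo the $\mathcal{F}^{N-1}$-truncation of $\Y_s$, giving $R\mathbf{Z}^{[\rho]_i}_{s,t} = R\Y^\rho_{s,t} + (\text{correction})$; the first summand is $O(|t-s|^{(N-|\rho|)\alpha})$ from $\Y \in \cbrpx$, while the correction is of the matching order $O(|t-s|^{(N-|\rho|-1)\alpha})$ via item~(\ref{it:bitem3}). For any remaining $\sigma \in \mathcal{F}^{\leq N-1}$ (a proper forest, or a single tree whose root is not labelled $i$), both $\langle \sigma, \mathbf{Z}_t\rangle$ and $\langle \X_{s,t}\star \sigma, \mathbf{Z}_s\rangle$ vanish outright from the support structure described above.

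The principal obstacle will be confirming the Hochschild identity for $\Delta_{\text{MKW}}$: the analogue for the Connes--Kreimer coproduct is classical, but the MKW coproduct is defined via \emph{left} admissible cuts of \emph{planar} forests, and one must check that planarity either respects the splitting or else produces only additional terms whose right-hand tensor factor pairs trivially with the support of $\mathbf{Z}_s$ (so that the case-wise collapses still go through). Beyond this, the remaining work is a careful but routine bookkeeping exercise: isolating the top-level truncation corrections arising from $\mathcal{F}^{N-1}$ in the first two cases and estimating them directly by the Hölder bound in item~(\ref{it:bitem3}) of Definition~\ref{def:pbrp}.
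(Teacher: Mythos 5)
Your proposal is correct and mirrors the paper's own case-by-case decomposition (testing against $\etree$, against trees $[\rho]_i$ with root labelled $i$, and against all remaining forests). You are in fact \emph{more} careful than the paper with the truncation bookkeeping: the paper's Case~1 asserts $|R\Z^{\etree}_{s,t}|=O(|t-s|^{(N+1)\alpha})$ and Case~2 asserts the exact identity $R\Z^{[\mu]_i}_{s,t}=R\Y^{\mu}_{s,t}$, but since $\Z_s\in\hm^{\leq N-1}$ stores coefficients $\langle\tau,\Y_s\rangle$ only for $\tau\in\mathcal{F}^{\leq N-2}$, while the sums appearing in Lemmas~\ref{thm:inte1}--\ref{thm:inte2} and in $\langle\X_{s,t}\star\mu,\Y_s\rangle$ run over $\tau\in\mathcal{F}^{\leq N-1}$, both computations actually leave a level-$(N-1)$ correction that the paper silently drops. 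You correctly isolate these corrections, bound them via Definition~\ref{def:pbrp}~(\ref{it:bitem3}), and observe that the resulting orders $O(|t-s|^{N\alpha})$ and $O(|t-s|^{(N-|\rho|-1)\alpha})$ are weaker than the paper's claims yet exactly what~(\ref{eq:remainry0}) requires. You are also right that the Hochschild-cocycle identity $\Delta_{\mathrm{MKW}}[\tau]_i=[\tau]_i\otimes\etree+(\mathrm{id}\otimes B_i^+)\Delta_{\mathrm{MKW}}\tau$ (with $B_i^+$ the rooting operator $\rho\mapsto[\rho]_i$) is the unstated ingredient behind both the collapse in the $[\rho]_i$ case and the vanishing in the remaining case; the paper invokes it tacitly in the step labelled ``by~(\ref{eq:intek1})''. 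This identity does hold for the MKW coproduct, since every left admissible cut of a single tree keeps the root in the right tensor factor and such cuts correspond bijectively to the cuts of the crown forest; once that lemma is made explicit, your sketch fills in to a complete proof.
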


\begin{proof}
We only need to prove (\ref{eq:remainry0}) for $\Z:=\R{$\Int_0^\bullet$} \Y_rdX_{r}^{i}$, which is divided into the following two cases.

{\bf Case 1.} $\tau=\etree$. Then
\begin{align*}
R\Z_{s,t}^{\etree} =&\ \langle\etree,\R{$\Int_0^t$}\Y_{r}dX_{r}^{i}\rangle-\langle \X_{s,t}\star \etree, \R{$\Int_0^s$}\Y_{r}dX_{r}^{i}\rangle\\
=&\ \langle\etree,\R{$\Int_0^t$}\Y_{r}dX_{r}^{i}\rangle-\langle \X_{s,t}, \R{$\Int_0^s$}\Y_{r}dX_{r}^{i}\rangle\\
=&\ \langle\etree,\R{$\Int_0^t$}\Y_{r}dX_{r}^{i}\rangle-\sum_{\tau\in \mathcal{F}^{\leq (N-1)}}\langle \tau,\R{$\Int_0^s$}\Y_{r}dX_{r}^{i}\rangle\langle \X_{s,t}, \tau \rangle\\
=&\ \langle\etree,\R{$\Int_0^t$}\Y_{r}dX_{r}^{i}\rangle-\langle\etree,\R{$\Int_0^s$}\Y_{r}dX_{r}^{i}\rangle-\sum_{\tau\in \mathcal{F}^{\leq (N-1)}_+}\langle \tau,\R{$\Int_0^s$}\Y_{r}dX_{r}^{i}\rangle\langle \X_{s,t}, \tau \rangle\\
=&\ \R{$\Int_s^t$}Y_{r}dX_{r}^{i}-\sum_{[\tau]_i\in \mathcal{F}^{\leq (N-1)}} \langle \tau, \Y_{s} \rangle\langle \X_{s,t}, [\tau]_i \rangle,
\end{align*}
which implies from (\ref{eq:inte1-}) and (\ref{eq:inte2-}) that
$| R\Z_{s,t}^{\etree} |=O(| t-s |^{(N+1)\alpha} ).$

{\bf Case 2.} $\tau\in \f^{N-1}\setminus\{\etree\}$. If $\tau = [\mu]_i$ for some $\mu\in \f^{N-2}$, then
\begin{align*}
R\Z_{s,t}^{[\mu]_i}
=&\ \langle [\mu]_i, \R{$\Int_0^t$}\Y_{r}dX_{r}^{i}\rangle - \langle \X_{s,t}\star[\mu]_i, \R{$\Int_0^s$}\Y_{r}dX_{r}^{i}\rangle \hspace{0.5cm} (\text{by (\ref{eq:remainry})})\\
=&\ \langle \mu,\Y_{t}\rangle- \langle \X_{s,t}\star\mu, \Y_{s}\rangle \hspace{2cm} (\text{by (\ref{eq:intek1})})\\
=&\ R\Y^{\mu}_{s,t} \hspace{4cm} (\text{by (\ref{eq:remainry})})\\
=&\ O\big(|t-s|^{(N-(|\mu|+1))\alpha}\big) \hspace{2cm} (\text{by (\ref{eq:remainry0})}).
\end{align*}
Otherwise,
$$R\Z_{s,t}^{\tau}\overset{(\ref{eq:remainry})}{=}\langle \tau, \R{$\Int_0^t$}\Y_{r}dX_{r}^{i}\rangle - \langle\X_{s,t}\star \tau, \R{$\Int_0^s$}\Y_{r}dX_{r}^{i}\rangle\overset{(\ref{eq:intek1})}{=}0. $$
This completes the proof.
\end{proof}

\begin{remark}
As a component-wisely version of Lemma~\mref{lem:inte1}, let
$$Z=(Z^1,\ldots,Z^d):[0,T]\rightarrow (\mathbb{R}^n)^d$$
be a path and
$$\Z=(\Z^{1}, \ldots, \Z^{d}):[0,T]\rightarrow \Big((\hmsN)^n \Big)^d,$$
where each $\Z^{i}:[0,T]\rightarrow (\hmsN)^n$ is an $\X$-controlled planarly branched rough path above $Z^i$ given in Remark~\mref{rk:gcont} for $i=1, \ldots, d$. Then we can define an $\X$-controlled planarly branched rough path
\begin{equation*}
\R{$\Int_0^\bullet$}\Z_r\cdot dX_r:[0,T]\rightarrow (\hmsN)^n,\quad t\mapsto \R{$\Int_0^t$}\Z_r \cdot dX_r:=\sum_{i=1}^{d}\R{$\Int_0^t$} \Z^i_rdX_{r}^{i}.
\end{equation*}
 Notice that each $\R{$\Int_0^t$}\Z_{r}^{i}dX_{r}^{i}$ is given in Lemma~\mref{lem:inte1}. Then
\begin{align}
\langle\etree,\R{$\Int_0^t$}\Z_{r}\cdot dX_{r}\rangle=&
 \sum_{i=1}^{d}\langle\etree,\R{$\Int_0^t$}\Z_{r}^{i}dX_{r}^{i}\rangle\overset{(\ref{eq:intek1})}{=}\sum_{i=1}^{d}
 \R{$\Int_0^t$}Z_{r}^{i}dX_{r}^{i}=:\R{$\Int_s^t$} Z_r \cdot dX_{r} \in\RR^n,\nonumber\\
\langle [\tau]_{j},\R{$\Int_0^t$}\Z_{r}\cdot dX_{r}\rangle=&\sum_{i=1}^{d}\langle[\tau]_{j},\R{$\Int_0^t$}\Z_{r}^{i}dX_{r}^{i}\rangle\overset{(\ref{eq:intek1})}{=}\langle \tau,\Z_{t}^{j}\rangle\in\RR^n, \quad \forall j\in A, \tau\in \f^{N-2},\nonumber\\
\langle \tau,\R{$\Int_0^t$}\Z_{r}\cdot dX_{r}\rangle=&\sum_{i=1}^{d}\langle\tau,\R{$\Int_0^t$}\Z_{r}^{i}dX_{r}^{i}\rangle\overset{(\ref{eq:intek1})}{=}0, \quad \forall \tau\in \f^{N-1}\setminus\mathcal{T}^{N-1}.\mlabel{eq:inte2}
\end{align}
\end{remark}

\subsection{Compositions of smooth functions}\label{sec:2.3}
We are going to study compositions of smooth functions with controlled planarly branched rough paths. For a map $F:\RR^n \to \RR^n$, denote~\cite{Kel}
\begin{equation}
D^mF (u):(v_1, \ldots, v_m):=\sum_{\alpha_1, \ldots, \alpha_m=1}^{n} \partial_{\alpha_1}\cdots\partial_{\alpha_m}F (u) v_1^{\alpha_1}\cdots v_m^{\alpha_m} \in \RR^n,
\mlabel{eq:ieq}
\end{equation}
where $u,v_i\in \RR^n$, $\partial_{\alpha_i}$ means to take the partial derivative with respect to the $\alpha_i$-th component of $u$, and $v^{\alpha_j}_i$ denotes the $\alpha_j$-th component of $v_i$. Here we use the convention $D^0F(u):(v_1, \ldots, v_0) := F (u)$.

The following result was shown in~\cite[Proposition 2.7]{GLM24} with roughness $\alpha\in (\frac{1}{4}, \frac{1}{3}]$.
With a similar and easier argument, we can obtain an analogous result  with roughness $\alpha\in (\frac{1}{3}, \frac{1}{2}]$.
Hence we conclude

\begin{lemma}
Let $\alpha\in (\frac{1}{4}, \frac{1}{2}]$ and $N= \lfloor \frac{1}{\alpha} \rfloor$. Let $\Y \in \cbrpx$ and $F:\RR^n \to \RR^n$ be a smooth function. Define
$$\Z: [0,T]\to (\mathcal{H}_{\mathrm{MKW}}^{\le N-1})^n,\quad t\mapsto \Z_{t} $$
by setting
\begin{equation}
\langle \etree, \Z_{t} \rangle :=\ F(Y_t), \quad \langle \tau, \Z_{t} \rangle:=\ \sum_{m=1}^{N-1} \sum_{\tau_1\cdots \tau_m=\tau} D^mF(Y_t):(\langle \tau_1, \Y_{t}  \rangle, \ldots ,\langle \tau_m, \Y_{t} \rangle), \quad \forall \tau\in \mathcal{F}^{\le (N-1)}\setminus\{\etree\}.
\mlabel{eq:zcbrp}
\end{equation}
Then $\Z$ is an $\X$-controlled planarly branched rough path above $F(Y)$, denoted by $F(\Y)$.
\mlabel{lem:compose}
\end{lemma}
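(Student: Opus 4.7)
The plan is to split along the value of $N$. For $\alpha\in(\frac{1}{4},\frac{1}{3}]$, where $N=3$, the statement is exactly~\cite[Proposition 2.7]{GLM24}, so the only remaining case is $\alpha\in(\frac{1}{3},\frac{1}{2}]$, where $N=2$ and $\mathcal{F}^{\leq 1}=\{\etree,\bullet_i\mid i\in A\}$. Here formula~(\ref{eq:zcbrp}) produces only two nontrivial components, $\langle\etree,\Z_t\rangle=F(Y_t)$ and $\langle\bullet_i,\Z_t\rangle=DF(Y_t)\cdot\langle\bullet_i,\Y_t\rangle$, so verifying $\Z\in\cbrpx$ reduces to establishing the two remainder estimates $|R\Z^{\etree}_{s,t}|=O(|t-s|^{2\alpha})$ and $|R\Z^{\bullet_i}_{s,t}|=O(|t-s|^{\alpha})$.

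For $\tau=\bullet_i$, I would first compute $\langle\X_{s,t}\star\bullet_i,\Z_s\rangle$ by dualising the product $\star$ against $\Delta_{\text{MKW}}$. Using the primitivity $\Delta_{\text{MKW}}\bullet_j=\etree\otimes\bullet_j+\bullet_j\otimes\etree$ together with the normalisation $\langle\X_{s,t},\etree\rangle=1$, this collapses to $DF(Y_s)\cdot\langle\bullet_i,\Y_s\rangle$. Therefore
\[
R\Z^{\bullet_i}_{s,t}=DF(Y_t)\cdot\langle\bullet_i,\Y_t\rangle-DF(Y_s)\cdot\langle\bullet_i,\Y_s\rangle,
\]
which I split as $(DF(Y_t)-DF(Y_s))\cdot\langle\bullet_i,\Y_t\rangle+DF(Y_s)\cdot(\langle\bullet_i,\Y_t\rangle-\langle\bullet_i,\Y_s\rangle)$. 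The smoothness of $F$ combined with the $\alpha$-H\"older regularity of $Y$ (itself inherited from $\Y\in\cbrpx$) bounds the first summand by $O(|t-s|^\alpha)$, while the second equals $|R\Y^{\bullet_i}_{s,t}|=O(|t-s|^\alpha)$ after the same dualisation turns $\langle\X_{s,t}\star\bullet_i,\Y_s\rangle$ into $\langle\bullet_i,\Y_s\rangle$.

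For $\tau=\etree$, I would use that $\etree$ is the unit of $\star$ to get $\X_{s,t}\star\etree=\X_{s,t}$ and
\[
\langle\X_{s,t}\star\etree,\Z_s\rangle=F(Y_s)+\sum_{i}DF(Y_s)\cdot\langle\bullet_i,\Y_s\rangle\,\delta X^{i}_{s,t}.
\]
A second-order Taylor expansion gives $F(Y_t)=F(Y_s)+DF(Y_s)\cdot\delta Y_{s,t}+O(|\delta Y_{s,t}|^2)$; substituting $\delta Y_{s,t}=\sum_{i}\langle\bullet_i,\Y_s\rangle\,\delta X^{i}_{s,t}+R\Y^{\etree}_{s,t}$, with $|R\Y^{\etree}_{s,t}|=O(|t-s|^{2\alpha})$ by the controlled property of $\Y$, yields $|R\Z^{\etree}_{s,t}|=O(|t-s|^{2\alpha})$, as required.

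The main obstacle in the higher-truncation proof of~\cite{GLM24} is the Fa\`a di Bruno--type matching between the Hopf-algebraic expansion of $\langle\X_{s,t}\star\tau,\Z_s\rangle$ via $\Delta_{\text{MKW}}$ and the forest factorisations $\tau_1\cdots\tau_m=\tau$ appearing in~(\ref{eq:zcbrp}); at $N=2$ only primitive trees are involved, so this matching is essentially tautological. This is precisely the sense in which the case at hand is indeed ``similar and easier''.
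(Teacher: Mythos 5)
Your proposal matches the paper's strategy exactly: invoke \cite[Proposition 2.7]{GLM24} for the truncation level $N=3$ (i.e.\ $\alpha\in(\tfrac{1}{4},\tfrac{1}{3}]$), and then settle the remaining $N=2$ range by a direct, simpler calculation. The paper leaves the $N=2$ case as ``similar and easier,'' whereas you write it out; your two remainder estimates and the dualisation of $\star$ against $\Delta_{\mathrm{MKW}}$ are correct, so the filled-in details are sound.
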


In parallel,  we obtain the following two results via replacing the above $F(\Y)$ by $F(\X)$.


\begin{proposition}
Let $\alpha\in (\frac{1}{4},\frac{1}{3}]$ and $\X\in  \pbrpt $ above X. Let $F:\RR^n \to \RR^n$ be a smooth function.
Define
$$\Z: [0,T]\to (\mathcal{H}_{\mathrm{MKW}}^{\le 2})^n,\quad t\mapsto \Z_{t} $$ by setting
\begin{equation}
\begin{aligned}
\langle \etree, \Z_{t} \rangle:=&\ F(X_t) , \quad \langle \bullet_{i}, \Z_{t} \rangle:=\ \partial_iF(X_t),\\
\langle \bullet_{i}\bullet_{j}, \Z_{t} \rangle:=&\ \partial_i\partial_jF(X_t),  \quad \langle \tddeuxa{$i$}{$j$}\ \,, \Z_{t} \rangle:=\ 0, \quad \forall i,j \in A.
\end{aligned}
\mlabel{eq:zcbrp1}
\end{equation}
Then $\Z$ is an $\X$-controlled planarly branched rough path above $F(X)$, denoted by $F(\X)$.
\mlabel{pp:regu1}
\end{proposition}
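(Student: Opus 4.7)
With $N=3$, the task is to verify the remainder bound $|R\Z^\tau_{s,t}|=O(|t-s|^{(3-|\tau|)\alpha})$ from Definition~\ref{defn:cbrp} for every $\tau\in\mathcal{F}^{\leq 2}$, where $R\Z^\tau_{s,t}=\langle\tau,\Z_t\rangle-\langle\X_{s,t}\star\tau,\Z_s\rangle$. The structural simplification is that $\Z_s$ is supported on forests of degree at most $2$ while $\star$ is graded, so $\X_{s,t}\star\tau\in\he$ only contributes to the diagonal pairing $\langle\sigma,\sigma'\rangle=\delta_\sigma^{\sigma'}$ through its components of degree $\leq 2$; this is a finite and explicit piece, extracted by pairing $\tau$ against the part of $\X_{s,t}$ of complementary degree $\leq 2-|\tau|$.

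Concretely, the four cases run as follows. For $\tau=\tddeuxa{$i$}{$j$}$ of degree $2$, the only degree-$2$ contribution to $\X_{s,t}\star\tddeuxa{$i$}{$j$}$ comes from $\etree\star\tddeuxa{$i$}{$j$}=\tddeuxa{$i$}{$j$}$, hence $\langle\X_{s,t}\star\tddeuxa{$i$}{$j$},\Z_s\rangle=\langle\tddeuxa{$i$}{$j$},\Z_s\rangle=0=\langle\tddeuxa{$i$}{$j$},\Z_t\rangle$ and $R\Z^{\tddeuxa{$i$}{$j$}}_{s,t}\equiv0$. For $\tau=\bullet_i\bullet_j$ the same argument yields $\langle\X_{s,t}\star\bullet_i\bullet_j,\Z_s\rangle=\partial_i\partial_jF(X_s)$, and smoothness of $F$ together with $|\delta X_{s,t}|=O(|t-s|^\alpha)$ gives $|R\Z^{\bullet_i\bullet_j}_{s,t}|=|\partial_i\partial_jF(X_t)-\partial_i\partial_jF(X_s)|=O(|t-s|^\alpha)$. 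For $\tau=\bullet_i$ I invoke the Guin--Oudom-type identity $\bullet_k\star\bullet_i=\bullet_k\bullet_i+\tddeuxa{$k$}{$i$}$ for the post-Lie enveloping structure of $\he$; since the tree summand is annihilated by the pairing with $\Z_s$,
\[
\langle\X_{s,t}\star\bullet_i,\Z_s\rangle=\partial_iF(X_s)+\sum_{k\in A}\delta X^k_{s,t}\,\partial_k\partial_iF(X_s),
\]
and the first-order Taylor expansion of $\partial_iF$ about $X_s$ gives $|R\Z^{\bullet_i}_{s,t}|=O(|\delta X_{s,t}|^2)=O(|t-s|^{2\alpha})$. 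Finally, for $\tau=\etree$, since $\etree$ is the unit of $\star$,
\[
\langle\X_{s,t}\star\etree,\Z_s\rangle=\langle\X_{s,t},\Z_s\rangle=F(X_s)+\sum_i\delta X^i_{s,t}\,\partial_iF(X_s)+\sum_{i,j}\langle\X_{s,t},\bullet_i\bullet_j\rangle\,\partial_i\partial_jF(X_s),
\]
and symmetrizing the last sum using $\partial_i\partial_jF=\partial_j\partial_iF$ together with the shuffle-character identity $\langle\X_{s,t},\bullet_i\bullet_j\rangle+\langle\X_{s,t},\bullet_j\bullet_i\rangle=\langle\X_{s,t},\bullet_i\shuffle\bullet_j\rangle=\delta X^i_{s,t}\delta X^j_{s,t}$ collapses it to $\tfrac12\sum_{i,j}\delta X^i_{s,t}\delta X^j_{s,t}\partial_i\partial_jF(X_s)$. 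Comparing with the degree-$2$ Taylor expansion of $F(X_t)$ at $X_s$ with Lagrange remainder then yields $|R\Z^\etree_{s,t}|=O(|\delta X_{s,t}|^3)=O(|t-s|^{3\alpha})$.

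\textbf{Main obstacle.} The most delicate ingredient is Step~3, which rests on the explicit form of $\star$ on single vertices, namely $\bullet_k\star\bullet_i=\bullet_k\bullet_i+\tddeuxa{$k$}{$i$}$, the degree-$2$ manifestation of the Guin--Oudom extension of left grafting underlying the MKW Hopf algebra. Two structural cancellations keep the argument clean: the grafting summand $\tddeuxa{$k$}{$i$}$ is annihilated because $\Z$ is \emph{declared} to vanish on trees (which is also the reason $R\Z^{\tddeuxa{$i$}{$j$}}_{s,t}$ vanishes identically), and the non-symmetric coefficient $\langle\X_{s,t},\bullet_i\bullet_j\rangle$ only appears coupled to the symmetric tensor $\partial_i\partial_jF$, so that the shuffle-character property reduces it to $\tfrac12\delta X^i_{s,t}\delta X^j_{s,t}$. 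Without these two cancellations, higher tensorial content of $\X_{s,t}$ beyond the increments $\delta X_{s,t}$ would leak into the pairing and obstruct identification with the plain Taylor polynomial of $F$.
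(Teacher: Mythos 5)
Your argument is correct and follows essentially the same route as the paper's proof: expand each remainder $R\Z^{\tau}_{s,t}$ by dualizing $\star$ against $\Delta_{\mathrm{MKW}}$, discard the grafted-tree contributions because $\Z$ is declared to vanish on $\tddeuxa{$i$}{$j$}\ \,$-type trees, symmetrize the $\bullet_i\bullet_j$ coefficient using $\partial_i\partial_jF=\partial_j\partial_iF$ together with $\langle\X_{s,t},\bullet_i\shuffle\bullet_j\rangle=\langle\X_{s,t},\bullet_i\rangle\langle\X_{s,t},\bullet_j\rangle$, and then compare with the Taylor expansion of $F$, $\partial_iF$, $\partial_i\partial_jF$ at $X_s$. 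One small orientation slip worth fixing: in the paper's MKW convention one has $\Delta_{\mathrm{MKW}}(\tddeuxa{$i$}{$j$}\ \,)=\tddeuxa{$i$}{$j$}\ \,\otimes\etree+\bullet_j\otimes\bullet_i+\etree\otimes\tddeuxa{$i$}{$j$}\ \,$, so the degree-two Guin--Oudom identity reads $\bullet_k\star\bullet_i=\bullet_k\bullet_i+\tddeuxa{$i$}{$k$}\ \,$ (root $i$, leaf $k$), not $\tddeuxa{$k$}{$i$}\ \,$; this does not affect your conclusion, since either tree is annihilated by the pairing with $\Z_s$.
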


\begin{proof}
By Definition~\ref{defn:cbrp}, it suffices to prove the four remainders
\begin{align}
R\Z_{s,t}^{\etree}&:=\left \langle \etree,F(\X_t)  \right \rangle- \left \langle \X_{s,t}\star \etree,F(\X_s) \right \rangle,\mlabel{eq:zcbrp2}  \\
R\Z_{s,t}^{\bullet_i}&:=\left \langle \bullet_i,F(\X_t)  \right \rangle- \left \langle \X_{s,t}\star \bullet_i,F(\X_s) \right \rangle,\mlabel{eq:zcbrp3}\\
R\Z_{s,t}^{\bullet_i\bullet_j}&:=\left \langle \bullet_i\bullet_j,F(\X_t)  \right \rangle- \left \langle \X_{s,t}\star \bullet_i\bullet_j,F(\X_s) \right \rangle, \mlabel{eq:zcbrp5}\\
R\Z_{s,t}^{\tddeuxa{$i$}{$j$}}\ \,&:=\left \langle \tddeuxa{$i$}{$j$}\ \,, F(\X_t)  \right \rangle- \left \langle \X_{s,t}\star \tddeuxa{$i$}{$j$}\ \,,F(\X_s) \right \rangle,\mlabel{eq:zcbrp4}.
\end{align}
satisfy
$$R\Z_{s,t}^{\etree}=O(|t-s|^{3\alpha}),\, \quad R\Z_{s,t}^{\bullet}=O(|t-s|^{2\alpha}),\,\quad R\Z_{s,t}^{\bullet \bullet }=O(|t-s|^{\alpha}),\quad R\Z_{s,t}^{\tddeux}=O(|t-s|^{\alpha}).$$

For the first remainder in ({\ref{eq:zcbrp2}}),
\begin{align*}
R\Z_{s,t}^{\etree}
=&\ \left \langle \etree,F(\X_t)  \right \rangle- \left \langle \X_{s,t}\star \etree,F(\X_s) \right \rangle\\
=&\ F(X_t)-\sum_{\tau\in \mathcal{F}^{\leq 2}} \langle \tau, F(\X_s)  \rangle  \langle \X_{s,t}, \tau   \rangle \hspace{2cm} (\text{by (\ref{eq:zcbrp1})})\\
=&\ F(X_t)-F(X_s)-\sum_{i=1}^{d}\langle \bullet_i, F(\X_s)  \rangle\langle \X_{s,t}, \bullet_i  \rangle+\sum_{i,j=1}^{d}\langle \bullet_i\bullet_j, F(\X_s)  \rangle  \langle \X_{s,t}, \bullet_i\bullet_j   \rangle  \\
&\  \hspace{8cm} (\text{by~(\mref{eq:leq23}) and (\ref{eq:zcbrp1})})\\
=&\ F(X_t)-F(X_s)-\sum_{i=1}^{d}\partial_iF(X_s)\langle \X_{s,t}, \bullet_i\rangle-\sum_{i,j=1}^{d}\partial_i\partial_jF(X_s)\langle \X_{s,t}, \bullet_i\bullet_j\rangle \hspace{1cm} (\text{by (\ref{eq:zcbrp1})})\\
=&\ F(X_t)-F(X_s)-\sum_{i=1}^{d}\partial_iF(X_s)\langle \X_{s,t}, \bullet_i\rangle-\frac{1}{2}\sum_{i,j=1}^{d}\partial_i\partial_jF(X_s)\langle \X_{s,t}, \bullet_i\bullet_j+\bullet_j\bullet_i\rangle\\
=&\ F(X_t)-F(X_s)-\sum_{i=1}^{d}\partial_iF(X_s)\langle \X_{s,t}, \bullet_i\rangle-\frac{1}{2}\sum_{i,j=1}^{d}\partial_i\partial_jF(X_s)\left \langle \X_{s,t}, \bullet_i \shuffle \bullet_j  \right \rangle\\
=&\ F(X_t)-F(X_s)-\sum_{i=1}^{d}\partial_iF(X_s)\langle \X_{s,t}, \bullet_i\rangle-\frac{1}{2}\sum_{i,j=1}^{d}\partial_i\partial_jF(X_s)\langle \X_{s,t}, \bullet_i\rangle\langle \X_{s,t}, \bullet_j\rangle \\
&\ \hspace{8cm} (\text{by Definition~\ref{def:pbrp}~(\ref{it:bitem2})})\\
=&\ F(X_t)-F(X_s)-\sum_{i=1}^{d}\partial_iF(X_s)(X_{t}^{i}-X_{s}^{i})-\frac{1}{2}\sum_{i,j=1}^{d}
\partial_i\partial_jF(X_s)(X_{t}^{i}-X_{s}^{i})(X_{t}^{j}-X_{s}^{j})\quad
(\text{by~(\ref{eq:above})})\\
=&\ O(|X_t-X_s|^{3}) \hspace{1cm} (\text{by the Taylor expansion of $F$ at $X_s$ of order 2})\\
=&\ O(|t-s|^{3\alpha}) \hspace{1cm} (\text{by $|X_{t}^i - X_{s}^i| = |\langle \X_{s,t} , \bullet_i \rangle|$ and Definition~\ref{def:pbrp}~(\ref{it:bitem3})}).
\end{align*}
For the remainder in (\ref{eq:zcbrp3}), similar to the above process,
\begin{align*}
R\Z_{s,t}^{\bullet_i}
=&\ \left \langle \bullet_i,F(\X_t)  \right \rangle- \left \langle \X_{s,t}\star \bullet_i,F(\X_s) \right \rangle\\
=&\ \partial_iF(X_t)-\sum_{\tau\in \mathcal{F}^{\leq 2}}\left \langle \tau, F(\X_s)  \right \rangle \left \langle \X_{s,t}\star \bullet_i, \tau  \right \rangle \hspace{2cm} (\text{by (\ref{eq:zcbrp1})})\\
=&\ \partial_iF(X_t)-\sum_{\tau\in \mathcal{F}^{\leq 2}}\left \langle \tau, F(\X_s)  \right \rangle \left \langle \X_{s,t}\otimes \bullet_i, \Delta_{\text {MKW}} (h)  \right \rangle \\
=&\ \partial_i F(X_t)- \partial_iF(X_s)-\sum_{j=1}^{d}\langle  \bullet_j\bullet_i, F(\X_s)  \rangle\langle \X_{s,t}, \bullet_j  \rangle \hspace{1cm} (\text{by~(\mref{eq:leq23}) and (\ref{eq:zcbrp1})})\\
=&\ \partial_iF(X_t)-\partial_iF(X_s)-\sum_{j=1}^{d}\partial_j\partial_iF(X_s)\langle \X_{s,t}, \bullet_j\rangle \hspace{1cm} (\text{by (\ref{eq:zcbrp1})})\\
=&\  \partial_iF(X_t)-\partial_iF(X_s)-\sum_{j=1}^{d}\partial_j\partial_iF(X_s)(X_{t}^{j}-X_{s}^{j})\\
=&\ O(|X_t-X_s|^{2}) \hspace{1cm} (\text{by the Taylor expansion of $F$ at $X_s$ of order 1})\\
=&\ O(|t-s|^{2\alpha}) \hspace{1cm} (\text{by $|X_{t}^i - X_{s}^i| = |\langle \X_{s,t} , \bullet_i \rangle|$ and Definition~\ref{def:pbrp}~(\ref{it:bitem3})}).
\end{align*}
For the remainder in~(\ref{eq:zcbrp5}),
\begin{align*}
R\Z_{s,t}^{\bullet_i\bullet_j}=
&\left \langle \bullet_i\bullet_j,F(\X_t)  \right \rangle- \left \langle \X_{s,t}\star \bullet_i\bullet_j,F(\X_s) \right \rangle\\
=&\ \partial_i\partial_jF(X_t)-\sum_{\tau\in \mathcal{F}^{\leq 2}}\left \langle \tau, F(\X_s)  \right \rangle \left \langle \X_{s,t}\star \bullet_i\bullet_j, \tau  \right \rangle \hspace{2cm} (\text{by (\ref{eq:zcbrp1})})\\
=&\ \partial_i\partial_jF(X_t)-\sum_{\tau\in \mathcal{F}^{\leq 2}} \langle \tau, F(\X_s)  \rangle  \langle \X_{s,t}\otimes \bullet_i\bullet_j, \Delta_{\text {MKW}}(\tau)   \rangle\\
=&\ \partial_i\partial_jF(X_t)- \langle \bullet_i\bullet_j, F(\X_s)  \rangle  \langle \X_{s,t}\otimes \bullet_i\bullet_j, \Delta_{\text {MKW}}(\bullet_i\bullet_j)   \rangle    \hspace{1cm} (\text{by~(\mref{eq:leq23}) and (\ref{eq:zcbrp1})})\\
=&\ \partial_i\partial_jF(X_t)-\partial_i\partial_jF(X_s)\hspace{2cm} (\text{by (\ref{eq:zcbrp1})})\\
=&\ O(| X_t-X_s |)\hspace{2cm} (\text{by the mean value theorem})\\
=&\ O(| t-s | ^{\alpha}) \hspace{1cm} (\text{by $|X_t-X_s|=O(| t-s | ^{\alpha})$}).
\end{align*}
Finally, for the reaminder in (\ref{eq:zcbrp4}),
\begin{align*}
R\Z_{s,t}^{\tddeuxa{$i$}{$j$}}\ \,=&\
\left \langle \tddeuxa{$i$}{$j$}\ \,, F(\X_t)  \right \rangle- \left \langle \X_{s,t}\star \tddeuxa{$i$}{$j$}\ \,,F(\X_s) \right \rangle\\
=&\ -\left \langle \X_{s,t}\star \tddeuxa{$i$}{$j$}\ \,,F(\X_s) \right \rangle \hspace{3cm} (\text{by (\ref{eq:zcbrp1})})\\
=&\ -\sum_{\tau\in \mathcal{F}^{\leq 2}}\left \langle \tau, F(\X_s)  \right \rangle \left \langle \X_{s,t}\star \tddeuxa{$i$}{$j$}\ \,, \tau  \right \rangle  \\
=&\ -\sum_{\tau\in \mathcal{F}^{\leq 2}}\left \langle \tau, F(\X_s)  \right \rangle \left \langle \X_{s,t}\otimes \tddeuxa{$i$}{$j$}\ \,, \Delta_{\text {MKW}}(\tau)  \right \rangle \\
=&\ -\left \langle \tddeuxa{$i$}{$j$}\ \,, F(\X_s)  \right \rangle \left \langle \X_{s,t}\star \tddeuxa{$i$}{$j$}\ \,, \tddeuxa{$i$}{$j$}\ \,,   \right \rangle\\
=&\ 0 \hspace{2cm} (\text{by (\ref{eq:zcbrp1})}).
\end{align*}
This completes the proof.
\end{proof}

The following is the $N=2$ version of Proposition~\ref{pp:regu1}.

\begin{proposition}
Let $\alpha\in (\frac{1}{3},\frac{1}{2}]$ and $\X\in  \pbrps $ above X. Let $F:\RR^n \to \RR^n$ be a smooth function
Define
$$\Z: [0,T]\to (\mathcal{H}_{\mathrm{MKW}}^{\le 1})^n,\quad t\mapsto \Z_{t} $$ by setting
\begin{equation*}
\langle \etree, \Z_t\rangle:=\ F(X_t) , \quad \langle \bullet_{i}, \Z_t\rangle:=\ \partial_iF(X_t), \quad \forall i \in A.
\end{equation*}
Then $\Z$ is an $\X$-controlled planarly branched rough path above $F(X)$, denoted by $F(\X)$.
\mlabel{pp:regu2}
\end{proposition}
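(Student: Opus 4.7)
The plan is to follow exactly the strategy of the proof of Proposition~\ref{pp:regu1}, specialised to $N=2$. By Definition~\ref{defn:cbrp}, it suffices to verify the two remainder estimates
\[
R\Z_{s,t}^{\etree} = O(|t-s|^{2\alpha}) \qquad \text{and} \qquad R\Z_{s,t}^{\bullet_i} = O(|t-s|^{\alpha}),
\]
since now $\mathcal{F}^{\leq N-1} = \mathcal{F}^{\leq 1} = \{\etree,\bullet_i \mid i\in A\}$ and no higher-degree forests appear. So the computation is strictly shorter than in Proposition~\ref{pp:regu1}; in particular, the shuffle identity $\bullet_i\shuffle\bullet_j$ used there is not needed.

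For the first remainder, I would expand
\[
R\Z_{s,t}^{\etree} = F(X_t) - \langle \X_{s,t}\star \etree, F(\X_s)\rangle = F(X_t) - \sum_{\tau\in\mathcal{F}^{\leq 1}}\langle \tau,F(\X_s)\rangle\langle \X_{s,t},\tau\rangle
\]
using that $\etree$ is the unit for $\star$, substitute the defining values $\langle \etree,\Z_s\rangle = F(X_s)$ and $\langle \bullet_i,\Z_s\rangle = \partial_i F(X_s)$, and invoke~\eqref{eq:above} to rewrite $\langle \X_{s,t},\bullet_i\rangle = X_t^i - X_s^i$. What remains is
\[
F(X_t) - F(X_s) - \sum_{i=1}^{d}\partial_i F(X_s)(X_t^i - X_s^i) = O(|X_t-X_s|^{2})
\]
by the first-order Taylor expansion of $F$ at $X_s$, which is $O(|t-s|^{2\alpha})$ by Definition~\ref{def:pbrp}\eqref{it:bitem3}.

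For the second remainder, I would compute $\langle \X_{s,t}\star \bullet_i, \tau\rangle = \langle \X_{s,t}\otimes \bullet_i, \Delta_{\text{MKW}}(\tau)\rangle$ case by case on $\tau\in\mathcal{F}^{\leq 1}$. Using $\Delta_{\text{MKW}}(\etree) = \etree\otimes\etree$ and $\Delta_{\text{MKW}}(\bullet_j) = \bullet_j\otimes\etree + \etree\otimes\bullet_j$, only the $\tau = \bullet_i$ term survives and contributes $\partial_i F(X_s)$. Therefore
\[
R\Z_{s,t}^{\bullet_i} = \partial_i F(X_t) - \partial_i F(X_s) = O(|X_t-X_s|) = O(|t-s|^{\alpha})
\]
by the mean value theorem together with the Hölder bound on $X$.

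There is essentially no obstacle: the proof is a direct truncation of the $N=3$ argument already carried out in Proposition~\ref{pp:regu1}. The only care needed is in evaluating $\Delta_{\text{MKW}}$ on $\bullet_j$ correctly, so that the Taylor cancellations occur at the expected orders; everything else is bookkeeping.
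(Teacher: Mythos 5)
Your proof is correct and is exactly what the paper intends: the paper's own proof of Proposition~\ref{pp:regu2} is just the remark that it is ``similar and easier to the one of Proposition~\ref{pp:regu1},'' and your argument is precisely that $N=2$ truncation, with the correct remainder exponents, the first-order Taylor cancellation for $R\Z_{s,t}^{\etree}$, and the coproduct computation on $\mathcal{F}^{\leq 1}$ reducing $R\Z_{s,t}^{\bullet_i}$ to a mean-value estimate.
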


\begin{proof}
The proof is similar and easier to the one of Proposition~\ref{pp:regu1}.
\end{proof}

\subsection{Rough Differential Equation}\label{sec:2.4}
Consider the RDE~(\ref{ivp}), which is understood by interpreting it as the integral equation
\begin{equation}
\left\{
\begin{array}{rll}
\delta Y_{s,t}& =\R{$\Int_s^t$}{f(Y_{r})\cdot dX_{r}} ,\quad \forall s,t\in [0, T] ,\\
Y_0 &= \xi.
\end{array}\right.
\mlabel{eq:RDE}
\end{equation}
As the case of branched rough path~\cite{Kel}, solutions to (\ref{eq:RDE}) are defined by lifting the problem to the space of $X$-controlled planarly branched rough paths.

\begin{definition}\cite{GLM24}
Let $\X:[0,T]\rightarrow (\mathcal{H}_{\text {MKW}}^{g})^{\le N}$ be an $\alpha$-H\"{o}lder rough path above $X$. A path $Y:[0,T]\rightarrow \mathbb{R}^n$ with $Y_0 = \xi$ is called {\bf a solution} to (\ref{eq:RDE}) if, there is an $\X$-controlled planarly branched rough path $\Y:[0,T]\rightarrow (\hmsN)^n $ above $Y$ such that
\begin{equation}
\delta \Y_{s,t} =\R{$\Int_s^t$}{f(\Y_{r})\cdot dX_{r}} , \quad \forall s,t\in [0,T].
\mlabel{eq:BRDE}
\end{equation}
Here
$$f(\Y)=(f_1(\Y), \ldots, f_d(\Y)):[0,T]\rightarrow \Big((\hmsN)^n\Big)^d  $$ and each $f_i(\Y): [0,T]\rightarrow (\hmsN)^n$ above $f_i(Y)$ is given in (\ref{eq:zcbrp}) for $i=1, \ldots, d$.

\mlabel{defn:bRDE}
\end{definition}

For $\alpha\in (\frac{1}{4}, \frac{1}{3}]$ and a fixed planarly branched rough path $\X$, there is a unique $\Y$ satisfies~(\ref{eq:BRDE})~\cite[Theorem 3.8]{GLM24}, which is given by
\begin{equation}
\Y_{t}:=\R{$\Int_0^t$}{f(\Y_{r})\cdot dX_{r}} , \quad \forall t\in [0,T].
\mlabel{eq:yt0}
\end{equation}
This is also valid for the case of $\alpha\in (\frac{1}{3}, \frac{1}{2}]$.
Hence for $\alpha\in (\frac{1}{4}, \frac{1}{2}]$, applying~(\mref{eq:yt0}) yields
\begin{align}
Y_t&\ =\langle \etree,\Y_t\rangle=\langle \etree, \R{$\Int_0^t$}{f(\Y_{r})\cdot dX_{r}}\rangle , \mlabel{eq:rDE3}\\
\langle [\tau _1\cdots \tau _m]_i,\Y_t\rangle&\ =\langle [\tau _1\cdots \tau _m]_i, \R{$\Int_0^t$}{f(\Y_{r})\cdot dX_{r}}\rangle ,\quad \forall\ [\tau _1\cdots \tau _m]_i\in \mathcal{T}^{\leq N-1},\mlabel{eq:rDE1}\\
\langle \tau _1\cdots \tau _m, \Y_t\rangle
&\ = \langle \tau _1\cdots \tau _m, \R{$\Int_0^t$}{f(\Y_{r})\cdot dX_{r}}\rangle \overset{(\ref{eq:inte2})}{=}0, \quad \forall \tau _1\cdots \tau _m\in \mathcal{F}^{\leq N-1}\setminus \mathcal{T}^{\leq N-1}. \mlabel{eq:RDE1}
\end{align}
Here we employ the convention that $\tau _1\cdots \tau _m:= \etree$ provided $m=0$ in~(\mref{eq:rDE1}).

As in~\cite[p.~84]{Kel}, for each $ \tau \in \mathcal{F}^{\leq N-1}$, we can write the coefficient $\langle \tau, \Y_t \rangle$
as
\begin{equation}
f_\tau(Y_t)=\langle \tau, \Y_t \rangle
\mlabel{eq:ftau}
\end{equation}
for some smooth function  $f_\tau:\RR^n \to \RR^n$.
In particular, for $\tau = \bullet_i$, we can take $f_{\bullet_i}:=f_i$ by~(\mref{eq:rDE1}) and~(\mref{eq:inte2}).
Further, we refine~(\ref{eq:rDE1}) as
\begin{align}
f_{[\tau _1\cdots \tau _m]_i}(Y_t) =&\ \langle [\tau _1\cdots \tau _m]_i,\Y_t\rangle \nonumber\\
=&\ \langle [\tau _1\cdots \tau _m]_i,\R{$\Int_0^t$}{f(\Y_{r})\cdot dX_{r}}\rangle \hspace{1cm}(\text{by (\ref{eq:rDE1})})\nonumber\\
=&\ \langle \tau _1\cdots \tau _m, f_i(\Y_{t})\rangle \hspace{2.5cm}(\text{by (\ref{eq:inte2})})\nonumber\\
=&\ \sum_{k=1}^{N-1} \sum_{\tau'_1\cdots \tau'_k=\tau_1\cdots \tau_m} D^kf(Y_t):(\langle \tau'_1, \Y_{t}  \rangle, \ldots ,\langle \tau'_k, \Y_{t} \rangle) \hspace{1cm}(\text{by (\ref{eq:zcbrp})})\nonumber\\
=&\ D^mf_i(Y_t):(\langle \tau_1, \Y_{t}  \rangle, \ldots ,\langle \tau_m, \Y_{t} \rangle)\hspace{1cm}(\text{by (\ref{eq:RDE1})})\nonumber\\
=&\ D^mf_i(Y_t):(f_{\tau _1}(Y_t), \ldots ,f_{\tau _m}(Y_t)).\mlabel{eq:RDE2}
\end{align}
We end this section with a kind of Taylor expansion in the framework of planarly branched rough paths.

\begin{proposition}
Let $\alpha\in (\frac{1}{4}, \frac{1}{2}]$ and $N= \lfloor \frac{1}{\alpha} \rfloor$. Let $\Y:[0,T]\rightarrow (\mathcal{H}_{\mathrm{MKW}}^{\le N-1})^n $ with $\langle \etree,\Y_s\rangle=Y_s$ be the unique \X-controlled planarly branched rough path satisfying~(\ref{eq:BRDE}). Then
\begin{equation}
\delta Y_{s,t}=\sum_{\tau\in \mathcal{T}^{\leq N}\setminus\{\etree\}}f_{\tau}(Y_s)\langle \X_{s,t}, \tau\rangle+ o(| t-s |).
\mlabel{eq:ftalor}
\end{equation}
\mlabel{pp:RDE1}
\end{proposition}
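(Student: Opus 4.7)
The plan is to reduce the claim to the integral equation~\meqref{eq:BRDE} and then expand each rough integral using the a priori estimates of Lemmas~\mref{thm:inte1} and~\mref{thm:inte2}. The crucial arithmetic is that, since $N=\lfloor 1/\alpha\rfloor$ forces $\alpha>1/(N+1)$, one has $(N+1)\alpha>1$, so the error terms in~\meqref{eq:inte1-} and~\meqref{eq:inte2-} are $O(|t-s|^{(N+1)\alpha})=o(|t-s|)$, exactly the tail required by~\meqref{eq:ftalor}.

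First, pairing~\meqref{eq:BRDE} with $\etree$ gives $\delta Y_{s,t}=\sum_{i=1}^{d}\int_s^t f_i(Y_r)\,dX_r^{\,i}$. Since $f_i(\Y)$ is controlled by Lemma~\mref{lem:compose}, applying the rough integral approximation (in the form of~\meqref{eq:inte1-} or~\meqref{eq:inte2-}, depending on the roughness regime) to each summand yields
\begin{align*}
\delta Y_{s,t}\;=\;\sum_{i=1}^{d}\sum_{\tau\in\mathcal{F}^{\leq N-1}} \langle \tau,f_i(\Y_s)\rangle\,\langle \X_{s,t},[\tau]_i\rangle\;+\;o(|t-s|).
\end{align*}

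Next, I would identify each coefficient $\langle \tau,f_i(\Y_s)\rangle$ with $f_{[\tau]_i}(Y_s)$. For $\tau=\etree$ this is immediate from Lemma~\mref{lem:compose} since $f_{[\etree]_i}=f_{\bullet_i}=f_i$. For $\tau\neq\etree$, the defining sum~\meqref{eq:zcbrp} for $\langle \tau,f_i(\Y_s)\rangle$ collapses: by~\meqref{eq:RDE1}, $\langle \mu,\Y_s\rangle$ vanishes on every non-tree forest $\mu$, so in every ordered decomposition $\tau=\tau_1\cdots\tau_m$ appearing in~\meqref{eq:zcbrp} the factors $\tau_j$ must all be trees. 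In the planar setting such a tree-factorization of $\tau$ is unique, so the sum reduces to the single term $D^m f_i(Y_s):\bigl(f_{\tau_1}(Y_s),\ldots,f_{\tau_m}(Y_s)\bigr)$, which equals $f_{[\tau]_i}(Y_s)$ by~\meqref{eq:RDE2} (read as the defining formula for $f_\sigma$ whenever $|\sigma|=N$).

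Finally, I would reindex via the planar-grafting bijection $(\tau,i)\mapsto [\tau]_i$ from $\mathcal{F}^{\leq N-1}\times A$ onto $\mathcal{T}^{\leq N}\setminus\{\etree\}$, obtaining
\begin{align*}
\delta Y_{s,t}\;=\;\sum_{\sigma\in\mathcal{T}^{\leq N}\setminus\{\etree\}} f_\sigma(Y_s)\,\langle \X_{s,t},\sigma\rangle\;+\;o(|t-s|),
\end{align*}
which is~\meqref{eq:ftalor}. The main obstacle is the collapse in the second step: showing that the ostensibly complicated sum over ordered decompositions in~\meqref{eq:zcbrp} reduces to the unique planar tree-factorization of $\tau$. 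This rests crucially on~\meqref{eq:RDE1}, a genuinely planar-branched feature of the RDE solution without which spurious cross terms coming from non-tree factorizations would persist.
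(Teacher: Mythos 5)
Your proposal is correct and follows essentially the same route as the paper: pair \eqref{eq:BRDE} with $\etree$, expand the rough integral via the estimates \eqref{eq:inte1-}/\eqref{eq:inte2-} with the $(N+1)\alpha>1$ bookkeeping, identify $\langle\tau,f_i(\Y_s)\rangle$ with $f_{[\tau]_i}(Y_s)$ via \eqref{eq:zcbrp}, \eqref{eq:RDE1} and \eqref{eq:RDE2}, and reindex through the grafting bijection. You are in fact slightly more explicit than the paper in noting that \eqref{eq:RDE1} kills every non-tree factor so that the decomposition sum in \eqref{eq:zcbrp} collapses to the unique planar tree-factorization, a step the paper's proof leaves implicit by silently replacing $\langle\tau_j,\Y_s\rangle$ with $f_{\tau_j}(Y_s)$ and invoking \eqref{eq:RDE2}.
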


\begin{proof}
Since $\Y$ is an $\X$-controlled planarly branched rough path, we have
\begin{align*}
\delta Y_{s,t}=&\ \langle \etree,  \R{$\Int_s^t$}{f(\Y_{r})\cdot dX_{r}} \rangle \hspace{2cm}(\text{by (\ref{eq:rDE3})})\\
=&\ \sum_{i=1}^{d}\R{$\Int_s^t$}f_i(Y_{r})dX_{r}^{i} \hspace{2cm}(\text{by (\ref{eq:inte2})})\\
=&\ \sum_{i=1}^{d}\sum_{\tau\in \mathcal{F}^{\leq N-1}} \langle \tau , f_i(\Y_{s}) \rangle \langle \X_{s,t} , [\tau]_i \rangle+O(| t-s |^{(N+1)\alpha})  \hspace{1cm}(\text{by (\ref{eq:inte1-}) and (\ref{eq:inte2-})})\\
=&\ \sum_{i=1}^{d}\sum_{\tau\in \mathcal{F}^{\leq N-1}} \langle \tau , f_i(\Y_{s}) \rangle \langle \X_{s,t} , [\tau]_i \rangle+o(| t-s |)  \hspace{1cm}(\text{by $(N+1)\alpha>1$})\\
=&\  \sum_{i=1}^{d}\sum_{m=1}^{N-1}\sum_{\tau_1\cdots \tau_m=\tau \in\mathcal{F}^{\leq N-1}} D^mf_i(Y_t):(\langle \tau_1, \Y_{t}  \rangle, \ldots ,\langle \tau_m, \Y_{t} \rangle)  \langle \X_{s,t} , [\tau_1\cdots\tau_m]_i \rangle +o(| t-s |) \\
&\ \hspace{8cm}(\text{by (\ref{eq:zcbrp})})\\
=&\ \sum_{i=1}^{d}\sum_{m=1}^{N-1}\sum_{\tau_1\cdots \tau_m=\tau\in\mathcal{F}^{\leq N-1}} D^mf_i(Y_t):(f_{\tau_1}(Y_{t}) , \ldots ,f_{\tau_m}(Y_{t}))\langle \X_{s,t} , [\tau_1\cdots\tau_m]_i \rangle+o(| t-s |) \\
&\ \hspace{8cm}(\text{by $\langle \tau, \Y_s \rangle= f_\tau(Y_s)$})\\
=&\ \sum_{i=1}^{d}\sum_{m=1}^{N-1}\sum_{\tau_1\cdots\tau_m=\tau\in \mathcal{F}^{\leq N-1}}f_{[\tau _1\cdots \tau _m]_i}(Y_s)\langle \X_{s,t} , [\tau_1\cdots\tau_m]_i \rangle+ o(| t-s |) \hspace{1cm}(\text{by (\ref{eq:RDE2})})\\
=&\ \sum_{\tau\in \mathcal{T}^{\leq N}\setminus\{\etree\}} f_{\tau}(Y_s)\langle \X_{s,t}, \tau\rangle+o(| t-s |) \hspace{1cm}(\text{by taking $\tau:=[\tau _1\cdots \tau _m]_i$}).
\end{align*}
This completes the proof.
\end{proof}

\section{ It$\mathrm{\hat{o}} $ formula for planarly branched rough paths}\label{sec:3}
This section is devoted to It$\mathrm{\hat{o}} $ formulas for $\alpha$-H\"{o}lder planarly branched rough paths with roughness $\alpha\in (\frac{1}{3}, \frac{1}{2}]$ and $\alpha\in (\frac{1}{4}, \frac{1}{3}]$, both of which have the simple case $F(X)$ and the general case $F(Y)$.

\subsection{It$\mathrm{\hat{o}} $ formula of the simple case $F(X)$}\label{sec:3.1}
The main results in this subsection are Theorem~\ref{thm:ito1} and Theorem~\ref{thm:ito2}. Let us begin with the following lemma.

\begin{lemma}
Let $\mathcal{H}_{\mathrm{MKW}}$ and $\lbar{\mathcal{H}}_{\mathrm{MKW}}$ be the MKW Hopf algebras generated respectively by the alphabets $A$ and $\lbar{A}$, where $A\subset \lbar{A}$. Let $\X$ be a planarly branched rough on $\mathcal{H}$ above $X=(X^i)_{i\in A}$. Then there exists a planarly branched rough path $\lbar{\X}$ on $\lbar{\mathcal{H}}_{\mathrm{MKW}}$ above $\lbar{X}=(\lbar{X}^i)_{i\in \lbar{A}}$ such that
\begin{equation*}
\langle \lbar{\X}, \tau\rangle=\langle \X, \tau  \rangle, \quad \forall \tau \in \mathcal{F}.
\end{equation*}
\mlabel{lem:ex1}
\end{lemma}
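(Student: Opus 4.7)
The plan is to extend $\X$ trivially by setting the new coordinates of the underlying path to zero and declaring $\lbar{\X}$ to vanish on every forest that touches the enlarged alphabet. Concretely, I would set $\lbar{X}^i := X^i$ for $i\in A$ and $\lbar{X}^i := 0$ for $i\in \lbar{A}\setminus A$, and define
\[
\langle \lbar{\X}_{s,t}, \tau\rangle := \begin{cases} \langle \X_{s,t}, \tau\rangle, & \text{if every vertex of } \tau \text{ carries a label in } A,\\ 0, & \text{otherwise,}\end{cases}
\]
extended by linearity to $\lbar{\mathcal{H}}_{\mathrm{MKW}}$. This is the simplest choice compatible with the requested equality $\langle \lbar{\X}_{s,t}, \tau\rangle = \langle \X_{s,t}, \tau\rangle$ for $\tau\in \mathcal{F}$, and it automatically satisfies the "above $\lbar{X}$" condition (\ref{eq:above}), since $\langle \lbar{\X}_{s,t}, \bullet_i\rangle$ equals $X^i_t-X^i_s$ for $i\in A$ and equals $0=\lbar{X}^i_t-\lbar{X}^i_s$ for $i\in \lbar{A}\setminus A$. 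The Hölder estimate of Definition~\ref{def:pbrp}(iii) is equally immediate: on old forests it is inherited from $\X$, and on forests with a new label both sides vanish.

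The two conditions to verify are the shuffle identity~(ii) and Chen's identity~(i) of Definition~\ref{def:pbrp}. The shuffle identity is straightforward: the shuffle product is vertex-preserving, so if either $\sigma$ or $\tau$ contains a vertex labelled in $\lbar{A}\setminus A$ then every forest appearing in $\sigma\shuffle\tau$ does as well, and both sides of the identity vanish; if neither does, the identity reduces to the shuffle identity for $\X$. For Chen's identity I would exploit the key structural feature of $\Delta_{\mathrm{MKW}}$: it is built from \emph{left admissible cuts} (cf.~\cite[Section~3]{ER25}), which partition the vertices of $\tau$ between the two tensor factors without deletion or relabelling. Hence if $\tau$ contains a vertex labelled in $\lbar{A}\setminus A$, then in every term $\tau_{(1)}\otimes \tau_{(2)}$ of $\Delta_{\mathrm{MKW}}(\tau)$ that vertex sits in exactly one of the two factors, so
\[
\langle \lbar{\X}_{s,u}\star \lbar{\X}_{u,t}, \tau\rangle \;=\; \sum \langle \lbar{\X}_{s,u},\tau_{(1)}\rangle\, \langle \lbar{\X}_{u,t},\tau_{(2)}\rangle \;=\; 0 \;=\; \langle \lbar{\X}_{s,t}, \tau\rangle.
\]
If instead all vertices of $\tau$ lie in $A$, the same conservation of vertices forces every summand of $\Delta_{\mathrm{MKW}}(\tau)$ to consist of two $A$-decorated factors, so Chen's identity for $\lbar{\X}$ reduces \emph{verbatim} to Chen's identity for $\X$, which holds by hypothesis.

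The only genuinely delicate point is the invocation of the vertex-preserving character of $\Delta_{\mathrm{MKW}}$; once this is in place, the three conditions are dispatched almost formally. I expect the main (minor) obstacle to be writing the case analysis on $\tau$ cleanly, since in the MKW setting the coproduct on a product of trees is \emph{not} obtained by extending the tree coproduct multiplicatively, so some care is needed to confirm that left admissible cuts act on an arbitrary planar forest in the required label-preserving manner. Given that, the extension-by-zero construction yields the desired $\lbar{\X}$.
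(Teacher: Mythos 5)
Your extension-by-zero construction is internally consistent as far as it goes: the coproduct $\Delta_{\mathrm{MKW}}$ and the product $\shuffle$ both carry decorations along with the vertices, so $\mathcal H_{\mathrm{MKW}}$ sits inside $\lbar{\mathcal H}_{\mathrm{MKW}}$ as a graded sub-Hopf algebra, and your case analysis does correctly verify Definition~\ref{def:pbrp}(a)--(c) for the $\lbar{\X}$ you define. The difficulty is that this proves only the degenerate case $\lbar{X}^i\equiv 0$ for $i\in\lbar{A}\setminus A$, which is not what the lemma has to deliver. In the proof of Lemma~\ref{lem:ex} the extended path $\hat X$ is built \emph{first}, with non-trivial new components $\hat X^{(ij)}$ whose increments are $\langle\X_{s,t},\,\bullet_j\bullet_i-[\bullet_j]_i\rangle$, and Lemma~\ref{lem:ex1} is then invoked to supply a planarly branched rough path $\EX$ above \emph{this particular} $\hat X$. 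Your $\lbar{\X}$ has $\langle\lbar{\X}_{s,t},\bullet_{(ij)}\rangle=0$, which is generically different from $\hat X^{(ij)}_{s,t}$, so it is not above $\hat X$ and item~(b) of Lemma~\ref{lem:ex} would fail.

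The lemma must therefore be read as a Lyons-type extension theorem: \emph{given} a H\"older path $\lbar X$ whose $A$-components coincide with $X$, produce a planarly branched rough path above $\lbar X$ that restricts to $\X$ on $\mathcal F$. The substance is filling in $\langle\lbar{\X},\tau\rangle$ for forests $\tau$ with \emph{mixed} labels, consistently with the prescribed first-level increments, the shuffle identity, and Chen's identity all at once --- precisely the part your construction never touches. This is why the paper's proof is not a direct construction: it uses the fact that planarly branched rough paths are (weakly) geometric and refers to Kelly's Corollary~4.2.16, where the analogous extension for branched rough paths is carried out via the geometric encoding. Your approach proves only a trivial special case of the statement that is actually needed downstream.
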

\vspace{-0.8cm}
\begin{proof}
Since planarly branched rough paths are (weakly) geometric~\cite{ER25} like branched rough paths, we can prove the result in a similar way to~\cite[Corollary 4.2.16]{Kel}.
\end{proof}

As a consequence, we can always find an extension of $\X$ as follows.

\begin{lemma}
Let $\X\in \pbrp$ and $\mathcal{H}_{\mathrm{MKW}}$ the MKW Hopf algebra generated by the alphabet $A$. There exists a planarly branched rough path  $\EX$\  built over the MKW Hopf algebra $\mathcal{\lbar{H}_{\mathrm{MKW}}}$ generated by the extended alphabet
$$\lbar{A}:=A \cup \{(ij)\mid i,j=1,\ldots,d \}$$
such that
\begin{enumerate}
\item  $\langle \EX_{s,t},  \tau\rangle=\langle \X_{s,t},  \tau\rangle$, \quad $\forall\tau \in \mathcal{F}$,

\item $\langle \EX_{s,t},  \bullet_{(ij)}\rangle=\langle \X_{s,t},  \bullet_j\bullet_i- \tddeuxa{$i$}{$j$}\ \, \rangle$ \quad for $i,j=1,\ldots,d.$
\end{enumerate}
We call $\EX$ the bracket extension of $\X$.
\mlabel{lem:ex}
\end{lemma}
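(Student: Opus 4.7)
The plan is to mimic the bracket extension for branched rough paths~\cite[Cor.~4.2.16]{Kel}, substituting the MKW Hopf algebra for the BCK one throughout. Concretely, I would define the new coordinate paths explicitly so that property~(b) holds by construction, and then appeal to Lemma~\ref{lem:ex1} to lift the resulting path to a planarly branched rough path on $\lbar{\mathcal{H}}_{\mathrm{MKW}}$.

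First, for each new letter $(ij)\in\lbar{A}\setminus A$, set
\[
\lbar{X}^{(ij)}_t := \langle \X_{0,t},\; \bullet_j\bullet_i - \tddeuxa{$i$}{$j$} \rangle, \qquad t\in [0,T].
\]
Since $\bullet_j\bullet_i$ and $\tddeuxa{$i$}{$j$}$ both have two vertices, Definition~\ref{def:pbrp}~(\ref{it:bitem3}) shows that these increments are controlled by $|t-s|^{2\alpha}$, and in particular $\lbar{X}^{(ij)}$ is an $\alpha$-H\"older path. I would then establish the essential identity
\[
\delta \lbar{X}^{(ij)}_{s,t} = \langle \X_{s,t},\; \bullet_j\bullet_i - \tddeuxa{$i$}{$j$} \rangle, \qquad 0\le s\le t\le T,
\]
by expanding the left-hand side via Chen's relation $\X_{0,t}=\X_{0,s}\star\X_{s,t}$ (Definition~\ref{def:pbrp}~(\ref{it:bitem1})). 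The expansion produces, besides the desired term $\langle\X_{s,t},\bullet_j\bullet_i - \tddeuxa{$i$}{$j$}\rangle$, mixed contributions arising from the non-primitive pieces of bidegree $(1,1)$ in the coproducts $\Delta_{\mathrm{MKW}}(\bullet_j\bullet_i)$ and $\Delta_{\mathrm{MKW}}(\tddeuxa{$i$}{$j$})$. These cross terms must cancel: using the MKW shuffle identity that expresses $\bullet_i\shuffle\bullet_j$ as a specific combination of $\bullet_i\bullet_j$, $\bullet_j\bullet_i$, $\tddeuxa{$i$}{$j$}$ and $\tddeuxa{$j$}{$i$}$, the shuffle character property (Definition~\ref{def:pbrp}~(\ref{it:bitem2})) converts precisely the mixed $\langle\X_{0,s},\bullet_{\star}\rangle\langle\X_{s,t},\bullet_{\star}\rangle$ contributions into a multiple of $X^i_{st}X^j_{st}$, and the choice of bracket $\bullet_j\bullet_i-\tddeuxa{$i$}{$j$}$ is tailored so that the net coefficient of these products vanishes.

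Once the increment identity is in place, $\lbar{X}=(X^i,\lbar{X}^{(ij)})$ is an $\alpha$-H\"older path over the extended alphabet $\lbar{A}$ that is compatible with $\X$. Lemma~\ref{lem:ex1} then supplies a planarly branched rough path $\EX$ on $\lbar{\mathcal{H}}_{\mathrm{MKW}}$ above $\lbar{X}$ with $\langle\EX_{s,t},\tau\rangle=\langle\X_{s,t},\tau\rangle$ for every $\tau\in\f$, which is property~(a). Property~(b) then follows immediately since $\langle\EX_{s,t},\bullet_{(ij)}\rangle=\delta\lbar{X}^{(ij)}_{s,t}$ by~\eqref{eq:above}, which equals $\langle \X_{s,t}, \bullet_j\bullet_i - \tddeuxa{$i$}{$j$} \rangle$ by construction. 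The main obstacle is the explicit MKW coproduct/shuffle computation underlying the cancellation in the second paragraph; if a direct verification becomes cumbersome, the fallback is to run Kelly's weakly geometric argument~\cite[Cor.~4.2.16]{Kel} verbatim with BCK replaced by MKW, which is justified by the (weak) geometricity of planarly branched rough paths recorded in~\cite{ER25}.
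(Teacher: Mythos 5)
Your overall strategy is the same as the paper's: define the new coordinate path via $\bullet_j\bullet_i-\tddeuxa{$i$}{$j$}\ $, verify it is an additive increment using Chen's relation, and then appeal to Lemma~\ref{lem:ex1} to extend to a full planarly branched rough path. Properties (a) and (b) then follow exactly as you say. That part is fine.

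The gap is in your account of why the ``cross terms'' cancel. You claim the bidegree $(1,1)$ contributions in the two coproducts are handled by (i) an ``MKW shuffle identity that expresses $\bullet_i\shuffle\bullet_j$ as a specific combination of $\bullet_i\bullet_j$, $\bullet_j\bullet_i$, $\tddeuxa{$i$}{$j$}\ $ and $\tddeuxa{$j$}{$i$}\ $'' and (ii) the shuffle-character property of $\X$. Neither of these is used or correct here. First, in $\hm$ the product is literally the shuffle of planar forests regarded as words in trees, so $\bullet_i\shuffle\bullet_j=\bullet_i\bullet_j+\bullet_j\bullet_i$ with no tree terms at all; there is no identity of the kind you invoke. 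Second, the character property of Definition~\ref{def:pbrp}~(\ref{it:bitem2}) multiplies pairings at the \emph{same} time index $(s,t)$, whereas the cross terms you describe have the mixed time indices $\langle\X_{0,s},\cdot\rangle\langle\X_{s,t},\cdot\rangle$, so it cannot be used to combine them. What actually happens, and what the paper verifies directly, is that
\begin{align*}
\Delta_{\text{MKW}}(\bullet_j\bullet_i)&=\bullet_j\bullet_i\otimes\etree+\bullet_j\otimes\bullet_i+\etree\otimes\bullet_j\bullet_i,\\
\Delta_{\text{MKW}}(\tddeuxa{$i$}{$j$}\ \,)&=\tddeuxa{$i$}{$j$}\ \,\otimes\etree+\bullet_j\otimes\bullet_i+\etree\otimes\tddeuxa{$i$}{$j$}\ \,,
\end{align*}
so that the bidegree $(1,1)$ pieces are \emph{identical} and cancel upon subtraction: the element $\bullet_j\bullet_i-\tddeuxa{$i$}{$j$}\ $ is primitive in $\hm$. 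Additivity of $\hat{X}^{(ij)}$ over $s<u<t$ then drops out of Chen's relation immediately, with no appeal to characters or shuffles at all. You should replace the shuffle-character argument with this direct primitivity check; as written, the mechanism you describe would not produce the needed cancellation.
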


\begin{proof}
Suppose $\X$ above $X=(X^1, \ldots, X^d)$. For $i,j=1,\ldots,d$, let
\begin{equation}
\hat{X}^{(ij)}_{s,t}:=\ \langle \X_{s,t}, \bullet_j\bullet_i- \tddeuxa{$i$}{$j$}\ \,\rangle, \quad \forall s,t\in [0, T].
\mlabel{eq:ex2}
\end{equation}
Then for any $s,t\in [0, T]$,
\begin{align}
\hat{X}^{(ij)}_{s,t}
=&\ \langle \X_{s,u}\star \X_{u,t}, \bullet_j\bullet_i- \tddeuxa{$i$}{$j$}\ \,\rangle \hspace{1cm} (\text{by Definition~\ref{def:pbrp}~(\ref{it:bitem1})})\nonumber\\
=&\ \langle \X_{s,u}\otimes \X_{u,t}, \Delta_{ \text {MKW}}(\bullet_j\bullet_i- \tddeuxa{$i$}{$j$}\ \,)\rangle \nonumber\\
%
%
=&\ \langle \X_{s,u}\otimes \X_{u,t}, (\bullet_j\bullet_i\otimes\etree+\bullet_j\otimes\bullet_i+\etree\otimes\bullet_j\bullet_i)-(\tddeuxa{$i$}{$j$}\ \,\otimes\etree+\bullet_j\otimes\bullet_i+\etree\otimes\tddeuxa{$i$}{$j$}\ \,) \rangle\nonumber\\
=&\ \langle \X_{s,u}\otimes \X_{u,t}, (\bullet_j\bullet_i- \tddeuxa{$i$}{$j$}\ \,)\otimes \etree+\etree \otimes (\bullet_j\bullet_i- \tddeuxa{$i$}{$j$}\ \,)\rangle\mlabel{eq:ex3}\\
=&\ \langle \X_{s,u}, \bullet_j\bullet_i- \tddeuxa{$i$}{$j$}\ \,\rangle+\langle \X_{u,t}, \bullet_j\bullet_i- \tddeuxa{$i$}{$j$}\ \,\rangle\nonumber\\
=&\ \hat{X}^{(ij)}_{s,u}+\hat{X}^{(ij)}_{u,t},\nonumber
\end{align}
that is, $\hat{X}^{(ij)}$ can be a valid component. Denote
\begin{equation}
\hat{X}:=\Big( X^1,\ldots,X^d,  (\hat{X}^{(ij)})_{1\leq i,j \leq d} \Big).
\mlabel{eq:2xhat}
\end{equation}
It follows from Lemma~\ref{lem:ex1} that there exists a planarly branched rough path $\EX$ above $\hat{X}$ such that
$$\langle \EX_{s,t},  \tau\rangle=\langle \X_{s,t},  \tau\rangle, \quad \forall\tau \in \mathcal{F}.$$
The proof is completed.
\end{proof}

\begin{remark}
Let $\EX$ be the bracket extension of $\X$. An $\X$-controlled planarly branched rough path $\Y$ is also $\EX$-controlled. Indeed, for
$\tau\in (\mathcal{H}_{\text {MKW}}^{g})^{\le N-1}$,
\begin{align*}
\langle \tau,\Y_t   \rangle- \langle \EX_{s,t}\star \tau,\Y_s  \rangle
=&\ \langle \tau,\Y_t   \rangle- \langle \EX_{s,t}\otimes \tau,\Delta_{\text {MKW}}(\Y_s)  \rangle\\
=&\ \langle \tau,\Y_t   \rangle- \langle \X_{s,t}\otimes \tau,\Delta_{\text {MKW}}(\Y_s)  \rangle \hspace{1cm}(\text{by $\Y_s\in \mathcal{H}_{\mathrm{MKW}}$ and Lemma~\ref{lem:ex}})\\
=&\ R\Y^{\tau}_{s,t}=O(|t-s|^{(N-|\tau|)\alpha}) \hspace{1cm}(\text{by (\ref{eq:remainry0}) and (\ref{eq:remainry})}).
\end{align*}
If $\tau\in (\mathcal{\hat{H}}_{\text {MKW}}^{g})^{\le N-1} \setminus (\mathcal{H}_{\text {MKW}}^{g})^{\le N-1}$, then
\begin{align*}
\langle \tau,\Y_t   \rangle- \langle \EX_{s,t}\star \tau,\Y_s  \rangle
=&\ \langle \tau,\Y_t   \rangle- \langle \EX_{s,t}\otimes \tau,\Delta_{\text {MKW}}(\Y_s)  \rangle\\
=&\ 0 \hspace{1cm}(\text{by $\Y_s , \Y_t\in \mathcal{H}_{\mathrm{MKW}}$ and $\tau\in(\mathcal{\hat{H}}_{\text {MKW}}^{g})^{\le N-1} \setminus (\mathcal{H}_{\text {MKW}}^{g})^{\le N-1}$}),
\end{align*}
as required.
\mlabel{rem:cbrp}
\end{remark}

The following notations will be employed in Theorems~\mref{thm:ito1} and~\mref{thm:ito2}:
\begin{equation}
 \R{$\Int_s^t$}DF(Y_r):dX_r:= \sum_{i=1}^{d}\R{$\Int_s^t$} \partial_iF(Y_r)dX^{i}_{r}, \quad
\R{$\Int_s^t$}D^2F(Y_r):d\hat{X}_r:= \sum_{i,j=1}^{d}\R{$\Int_s^t$} \partial_i\partial_jF(Y_r)d\hat{X}^{(ij)}_{r},
\mlabel{eq:note}
\end{equation}
where all integrals on the right hand side are rough integrals.
Now we come to the It\^o formula for planarly branched rough paths with roughness $\alpha\in (\frac{1}{3},\frac{1}{2}]$.

\begin{theorem}
Let $\alpha\in (\frac{1}{3},\frac{1}{2}]$, $\X\in  \pbrps $ above $X=(X^1, \ldots, X^d)$. Let $\EX$ above $\hat{X}$ in~(\ref{eq:2xhat}) be the bracket extension of\,
$\X$ and $F:\RR^n \to \RR^n$ a smooth function. Then
\begin{equation}
\delta F(X)_{s,t}=\R{$\Int_s^t$}DF(X_r): dX_r+\R{$\Int_s^t$}D^2F(X_r):d\hat{X}_r.
\mlabel{eq:integral1}
\end{equation}
\mlabel{thm:ito1}
\end{theorem}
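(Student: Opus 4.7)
The plan is to verify the identity by comparing the two-parameter functions $(s,t)\mapsto\delta F(X)_{s,t}$ and $(s,t)\mapsto \int_s^t DF(X_r){:}\,dX_r+\int_s^t D^2F(X_r){:}\,d\hat X_r$ as germs near the diagonal: first I would show they agree modulo $o(|t-s|)$, then invoke the additivity in $(s,t)$ of both sides (Chen's relation for the left, additivity of rough integrals for the right) together with a standard telescoping/sewing argument on vanishing-mesh partitions to promote the germ-level identity to a global one. The hypothesis $\alpha>\tfrac13$ is used in the form $3\alpha>1$, which ensures that every $O(|t-s|^{3\alpha})$ remainder is genuinely $o(|t-s|)$.

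For the left-hand side, I would apply the order-$2$ Taylor expansion of $F$ at $X_s$ to obtain
\[\delta F(X)_{s,t}=\sum_{i=1}^d\partial_iF(X_s)\,\delta X^i_{s,t}+\tfrac12\sum_{i,j=1}^d\partial_i\partial_jF(X_s)\,\delta X^i_{s,t}\delta X^j_{s,t}+O(|t-s|^{3\alpha}).\]
Then, using $\delta X^i_{s,t}=\langle\X_{s,t},\bullet_i\rangle$ together with the shuffle identity in Definition~\ref{def:pbrp}(\ref{it:bitem2})---namely $\langle\X_{s,t},\bullet_i\rangle\langle\X_{s,t},\bullet_j\rangle=\langle\X_{s,t},\bullet_i\shuffle\bullet_j\rangle=\langle\X_{s,t},\bullet_i\bullet_j+\bullet_j\bullet_i\rangle$---and the symmetry $\partial_i\partial_j=\partial_j\partial_i$, this reduces to
\[\delta F(X)_{s,t}=\sum_{i}\partial_iF(X_s)\langle\X_{s,t},\bullet_i\rangle+\sum_{i,j}\partial_i\partial_jF(X_s)\langle\X_{s,t},\bullet_j\bullet_i\rangle+o(|t-s|).\]

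For the right-hand side, I would expand each rough integral by Lemma~\ref{thm:inte2}. For the first summand, the $\X$-controlled rough path $\partial_iF(\X)$ provided by Proposition~\ref{pp:regu2} yields
\[\int_s^t\partial_iF(X_r)\,dX^i_r=\partial_iF(X_s)\langle\X_{s,t},\bullet_i\rangle+\sum_{k}\partial_k\partial_iF(X_s)\langle\X_{s,t},\tddeuxa{$i$}{$k$}\ \,\rangle+O(|t-s|^{3\alpha}).\]
For the second summand I would work on the bracket-extended rough path $\EX$ (Lemma~\ref{lem:ex} and Remark~\ref{rem:cbrp}) and note that since $F$ depends only on the original coordinates, the corresponding controlled rough path has vanishing $\bullet_{(kl)}$-components; the leading-order expansion is therefore
\[\int_s^t\partial_i\partial_jF(X_r)\,d\hat X^{(ij)}_r=\partial_i\partial_jF(X_s)\langle\EX_{s,t},\bullet_{(ij)}\rangle+o(|t-s|).\]
Substituting $\langle\EX_{s,t},\bullet_{(ij)}\rangle=\langle\X_{s,t},\bullet_j\bullet_i-\tddeuxa{$i$}{$j$}\ \,\rangle$ from Lemma~\ref{lem:ex}(b), summing, and invoking the symmetry $\partial_i\partial_j=\partial_j\partial_i$, the tree contributions $\sum_{i,k}\partial_k\partial_iF(X_s)\langle\X_{s,t},\tddeuxa{$i$}{$k$}\ \,\rangle$ produced by the first integral cancel precisely the $-\sum_{i,j}\partial_i\partial_jF(X_s)\langle\X_{s,t},\tddeuxa{$i$}{$j$}\ \,\rangle$ terms produced by the bracket extension, and what survives agrees with the expansion of the left-hand side.

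The main obstacle is exactly this last combinatorial cancellation. It is not built into any single earlier result but emerges from the interplay of three ingredients: the appearance of $\tddeuxa{$i$}{$k$}\ \,$ in the rough-integral expansion through the left-grafting $[\bullet_k]_i$, the subtraction of $\tddeuxa{$i$}{$j$}\ \,$ in the defining identity for $\bullet_{(ij)}$, and the symmetry of second partials. Identifying this cancellation is the conceptual core of the planarly branched It\^o correction; everything else---the Taylor expansion, the shuffle identity, the expansions of the rough integrals, and the closing sewing argument---is either quoted from the preceding section or entirely routine.
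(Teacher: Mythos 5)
Your proposal is correct and follows essentially the same route as the paper: second-order Taylor expansion of $F$ at $X_s$ rewritten via the shuffle identity, the rough-integral expansions from Lemma~\ref{thm:inte2} applied to $\partial_iF(\X)$ (Proposition~\ref{pp:regu2}) and to $\partial_i\partial_jF(\X)$ over the bracket-extended $\EX$ (Lemma~\ref{lem:ex}, Remark~\ref{rem:cbrp}), the cancellation between the grafted tree $\tddeuxa{$i$}{$j$}\ $ appearing in the first integral and the $\tddeuxa{$i$}{$j$}\ $ subtracted in the definition of $\bullet_{(ij)}$, and the additivity-plus-$o(|t-s|)$ argument to pass from the germ identity to the global one. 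The only point you elide that the paper makes explicit is the estimate $|\langle\EX_{s,t},\tddeuxa{$(ij)$}{$k$}\ \ \ \rangle|=O(|t-s|^{3\alpha})=o(|t-s|)$ justifying that the $\tddeuxa{$(ij)$}{$k$}\ \ \ $ contributions to the second rough integral may be dropped; otherwise the two arguments coincide.
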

\vspace{-0.5cm}

\begin{proof}
First,
\begin{align}
F(X)_{s,t}
=&\ \sum_{i=1}^{d}\partial_iF(X_s)(X_{t}^{i}-X_{s}^{i})+\frac{1}{2}\sum_{i,j=1}^{d}\partial_i
\partial_jF(X_s)(X_{t}^{i}-X_{s}^{i})(X_{t}^{j}-X_{s}^{j})+O(|X_t-X_s|^3)\nonumber\\
&\hspace{5cm} \text{(by the second-order Taylor expansion of $F$ at $X_s$)} \nonumber \\
=&\ \sum_{i=1}^{d}\partial_iF(X_s)\langle\X_{s,t}, \bullet_i\rangle+\frac{1}{2}\sum_{i,j=1}^{d}\partial_i\partial_jF(X_s)\langle\X_{s,t}, \bullet_i\rangle\langle\X_{s,t}, \bullet_j\rangle+o(|t-s|)\nonumber\\
&\ \hspace{4cm} (\text{by \X\ above $X$, $|X_t-X_s|=O(|t-s|^\alpha)$ and $3\alpha>1$})\nonumber\\
=&\ \sum_{i=1}^{d}\partial_iF(X_s)\langle\X_{s,t}, \bullet_i\rangle+\frac{1}{2}\sum_{i,j=1}^{d}\partial_i\partial_jF(X_s)\left \langle \X_{s,t}, \bullet_i \shuffle \bullet_j  \right \rangle+o(|t-s|) \nonumber\\
&\ \hspace{6cm} (\text{by Definition~\ref{def:pbrp}~(\ref{it:bitem2})})\nonumber\\
=&\ \sum_{i=1}^{d}\partial_iF(X_s)\langle\X_{s,t}, \bullet_i\rangle+\frac{1}{2}\sum_{i,j=1}^{d}\partial_i\partial_jF(X_s)\left \langle \X_{s,t}, \bullet_i\bullet_j+\bullet_j\bullet_i  \right \rangle+o(|t-s|)\nonumber\\
=&\ \sum_{i=1}^{d}\partial_iF(X_s)\langle\X_{s,t}, \bullet_i\rangle+\sum_{i,j=1}^{d}\partial_i\partial_jF(X_s)\langle\X_{s,t}, \bullet_i\bullet_j\rangle+o(|t-s|).\mlabel{eq:ito2}
\end{align}

To deal with the first summand in the above sum, we use Proposition~\ref{pp:regu2} to define
an $\X$-controlled planarly branched rough path
$\partial_iF(\X)\in \cbrpxs$ by setting
\begin{equation}
\langle \etree, \partial_iF(\X) \rangle:=\partial_iF(X), \quad  \langle \bullet_j, \partial_iF(\X) \rangle:=\partial_i\partial_jF(X), \quad \forall j \in A.
\mlabel{eq:par}
\end{equation}
Hence by Lemma~\ref{thm:inte2}, we can define the rough integral $\R{$\Int_s^t$} \partial_iF(X_r)dX_{r}^{i}$ such that
\begin{align}
&\ \R{$\Int_s^t$} \partial_iF(X_r)dX_{r}^{i} \nonumber\\
=&\ \left \langle \etree, \partial_iF(\X_s)  \right \rangle \left \langle \X_{s,t}, \bullet_i  \right \rangle+\sum_{j=1}^{d}\left \langle \bullet_j, \partial_iF(\X_s)  \right \rangle \left \langle \X_{s,t}, \tddeuxa{$i$}{$j$}\ \,  \right \rangle+O(|t-s|^{3\alpha})\hspace{1cm} (\text{by (\ref{eq:inte2-})})\nonumber\\
=&\ \left \langle \etree, \partial_iF(\X_s)  \right \rangle \left \langle \X_{s,t}, \bullet_i  \right \rangle+\sum_{j=1}^{d}\left \langle \bullet_j, \partial_iF(\X_s)  \right \rangle \left \langle \X_{s,t}, \tddeuxa{$i$}{$j$}\ \,  \right \rangle+o(|t-s|)\hspace{1cm} (\text{by $3\alpha>1$})\nonumber\\
=&\ \partial_iF(X_s)\langle \X_{s,t}, \bullet_i\rangle+\sum_{j=1}^{d}\partial_i\partial_jF(X_s) \langle \X_{s,t}, \tddeuxa{$i$}{$j$}\ \,\rangle +o(|t-s|)\hspace{1cm} (\text{by (\ref{eq:par})}).\mlabel{eq:int2}
\end{align}
Replacing the first term to the right hand side of (\ref{eq:ito2}) by (\ref{eq:int2}),
$$F(X)_{s,t}=\sum_{i=1}^{d}\Big(\R{$\Int_s^t$} \partial_iF(X_r)dX_{r}^{i}-\sum_{j=1}^{d}\partial_i\partial_jF(X_s) \langle\X_{s,t}, \tddeuxa{$i$}{$j$}\ \,\rangle\Big)+\sum_{i,j=1}^{d}\partial_i\partial_jF(X_s)\langle \X_{s,t}, \bullet_i\bullet_j\rangle+o(|t-s|). $$
Moving the first term on the right hand side to the left yields
\begin{align}
&\ F(X)_{s,t}-\sum_{i=1}^{d}\R{$\Int_s^t$} \partial_iF(X_r)dX_{r}^{i} \nonumber \\
=&\ \sum_{i,j=1}^{d}\partial_i\partial_jF(X_s)\langle \X_{s,t}, \bullet_i\bullet_j\rangle-\sum_{i,j=1}^{d}\partial_i\partial_jF(X_s) \langle\X_{s,t}, \tddeuxa{$i$}{$j$}\ \,\rangle+o(|t-s|)\nonumber\\
=&\ \sum_{i,j=1}^{d}\partial_i\partial_jF(X_s) \langle \X_{s,t}, \bullet_j\bullet_i\rangle-\sum_{i,j=1}^{d}\partial_i\partial_jF(X_s) \langle\X_{s,t}, \tddeuxa{$i$}{$j$}\ \,\rangle+o(|t-s|)\nonumber\\
&\ \hspace{2cm} (\text{by exchanging $i$ and $j$ in the first summand and $\partial_i\partial_j=\partial_j\partial_i $}) \nonumber\\
=&\ \sum_{i,j=1}^{d}\partial_i\partial_jF(X_s) \langle \X_{s,t}, \bullet_j\bullet_i- \tddeuxa{$i$}{$j$}\ \,  \rangle+o(|t-s|).\mlabel{eq:int3}
\end{align}
Now we focus on the first term on the right hand side.
Again using Proposition~\ref{pp:regu1}, we can define an $\X$-controlled planarly branched rough path $\partial_i\partial_j F(\X)$, which
is also $\EX$-controlled by Remark~\ref{rem:cbrp}.
So we can use Lemma~\ref{thm:inte2} to define the rough integral
$\R{$\Int_s^t$} \partial_i\partial_jF(X_r)d\hat{X}_{r}^{(ij)},$
which satisfies
\begin{align}
\R{$\Int_s^t$} \partial_i\partial_jF(X_r)d\hat{X}_{r}^{(ij)}
=&\ \langle \etree, \partial_i\partial_jF(\X_s)   \rangle  \langle \EX_{s,t}, \bullet_{(ij)}   \rangle+\sum_{k=1}^d\langle \bullet_k, \partial_i\partial_jF(\X_s)  \rangle \langle \EX_{s,t}, \tddeuxa{$(ij)$}{$k$}\ \ \ \ \rangle+O(|t-s|^{3\alpha}) \nonumber\\
&\ \hspace{8cm} (\text{by Lemma~\ref{thm:inte2}})\nonumber\\
=&\ \langle \etree, \partial_i\partial_jF(\X_s)   \rangle  \langle \EX_{s,t}, \bullet_{(ij)}   \rangle+o(|t-s|)\nonumber\\
&\ \hspace{5cm} (\text{by $|\langle \EX_{s,t}, \tddeuxa{$(ij)$}{$k$}\ \ \ \ \rangle|=O(|t-s|^{3\alpha})$ and $3\alpha>1$})\nonumber\\
=&\ \partial_i\partial_jF(X_s)\langle \X_{s,t},\bullet_j\bullet_i- \tddeuxa{$i$}{$j$}\ \, \rangle+o(|t-s|) \hspace{1.5cm} (\text{by Lemma~\ref{lem:ex}}).\mlabel{eq:int4}
\end{align}
Using (\ref{eq:int4}) to replace $\partial_i\partial_jF(X_s)\langle \X_{s,t},\bullet_j\bullet_i- \tddeuxa{$i$}{$j$}\ \, \rangle$ in (\ref{eq:int3}),
\begin{equation}
F(X)_{s,t}-\sum_{i=1}^{d}\R{$\Int_s^t$} \partial_iF(X_r)dX_{r}^{i}-\sum_{i,j=1}^{d}\R{$\Int_s^t$} \partial_i\partial_jF(X_r)d\hat{X}_{r}^{(ij)}=o(|t-s|).
\mlabel{eq:ito5}
\end{equation}
Since the left hand side is an increment, it must be zero.
Hence
\begin{align*}
F(X)_{s,t}
=&\ \sum_{i=1}^{d}\R{$\Int_s^t$} \partial_iF(X_r)dX_{r}^{i}+\sum_{i,j=1}^{d}\R{$\Int_s^t$} \partial_i\partial_jF(X_r)d\hat{X}_{r}^{(ij)}\\
=&\ \R{$\Int_s^t$} DF(X_r):dX_{r}+\R{$\Int_s^t$}D^2F(X_r)d:\hat{X}_{r}  \hspace{1cm} (\text{by (\ref{eq:note})}),
\end{align*}
as needed.
\end{proof}

\begin{remark}
\begin{enumerate}
\item In Lemma~\ref{lem:ex}, we can also choose
$$\langle \EX_{s,t},  \bullet_{(ij)}\rangle=\langle \X_{s,t},  \bullet_i\bullet_j- \tddeuxa{$j$}{$i$}\ \, \rangle,$$
which is not fundamentally different from the choice in~(\ref{eq:ex2}).

\item It follows from~(\ref{eq:ex3}) that the element $\bullet_j\bullet_i- \tddeuxa{$i$}{$j$}\ \,$ is prime in~$\mathcal{H}_{{\rm MKW}}$.

\item In the case of the branched rough path on top of non-planar rooted forests, the bracket path $\hat{X}^{(ij)}$ is defined by~\cite{Kel}
\begin{equation}
\delta\hat{X}^{(ij)}_{s,t}:= \langle \X_{s,t}, \bullet_i\bullet_j- \tddeuxa{$i$}{$j$}\ \, -\tddeuxa{$j$}{$i$}\ \, \rangle,\quad \forall i,j=1,\ldots,d,
\mlabel{eq:ito9}
\end{equation}
and then satisfies
\begin{equation}
\frac{1}{2}\sum_{i,j=1}^{d}\partial_i\partial_jF(X_s) \delta\hat{X}^{(ij)}_{s,t}=\frac{1}{2}\sum_{i,j=1}^{d}\partial_i\partial_jF(X_s) \langle \X_{s,t}, \bullet_i\bullet_j- \tddeuxa{$i$}{$j$}\ \, -\tddeuxa{$j$}{$i$}\ \, \rangle.
\mlabel{eq:ito10}
\end{equation}
Notice that there is a symmetrization map $\Omega $~\cite[Definition 8]{MKW}, which is a Hopf algebra morphism  from the BCK Hopf algebra to the MKW Hopf algebra. Applying $\Omega $ to (\ref{eq:ito9}) gives a planar version of~(\ref{eq:ito9}):
$$
\langle \X_{s,t}, \bullet_i\bullet_j+\bullet_j\bullet_i- \tddeuxa{$i$}{$j$}\ \, -\tddeuxa{$j$}{$i$}\ \, \rangle,\quad \forall i,j=1,\ldots,d.$$
Employing the above equation to replace the $\langle \X_{s,t}, \bullet_i\bullet_j- \tddeuxa{$i$}{$j$}\ \, -\tddeuxa{$j$}{$i$}\ \, \rangle$
in~(\mref{eq:ito10}) yields
\begin{align*}
\frac{1}{2}\sum_{i,j=1}^{d}\partial_i\partial_jF(X_s) \langle \X_{s,t}, \bullet_i\bullet_j+\bullet_j\bullet_i- \tddeuxa{$i$}{$j$}\ \, -\tddeuxa{$j$}{$i$}\ \, \rangle
= \sum_{i,j=1}^{d}\partial_i\partial_jF(X_s) \langle \X_{s,t}, \bullet_j\bullet_i- \tddeuxa{$i$}{$j$}\ \, \rangle,
\end{align*}
killing the symmetry factor 2 of the non-planar rooted forest $\bullet_i\bullet_j$ and showing that our definition of $\hat{X}^{(ij)}$ in (\ref{eq:ex2}) is natural.
\mlabel{rem:ito5}
\end{enumerate}
\end{remark}

Next, we turn to the case of $\alpha\in (\frac{1}{4},\frac{1}{3}]$.

\begin{theorem}
Let $\alpha\in (\frac{1}{4},\frac{1}{3}]$, $\X\in  \pbrpt $ above $X=(X^1, \ldots, X^d)$. Let $\EX$ above $\hat{X}$ in~(\ref{eq:2xhat}) be the bracket extension of
\,$\X$ and $F:\RR^n \to \RR^n$ a smooth function. For $i,j,k=1,\ldots,d$, define $\tilde{X}^{(ijk)}:[0,T] \to \RR$ by setting (with a given initial value)
\begin{equation}
\delta \tilde{X}^{(ijk)}_{s,t}:=\langle \EX_{s,t},\bullet_k\bullet_j\bullet_i-\tdtroisuna{$i$}{$j$}{$k$}\ \,-\tddeuxa{$(ij)$}{$k$}\ \ \ \ \rangle,\quad \forall s,t\in [0, T].
\mlabel{eq:xhat}
\end{equation}
Then
\begin{equation}
\delta F(X)_{s,t}=\R{$\Int_s^t$}DF(X_r): dX_r+\R{$\Int_s^t$}D^2F(X_r):d\hat{X}_r+\R{$\Int_s^t$}D^3F(X_r):d\tilde{X}_r,
\mlabel{eq:integral2}
\end{equation}
where the first and second integrals are from~(\ref{eq:note}), and the third integral as a Young integral~\cite{You} is defined by
$$\R{$\Int_s^t$}D^3F(Y_r):d\tilde{X}_r:=\sum_{i, j, k=1}^{d}\R{$\Int_s^t$} \partial_{i}\partial_{j}\partial_{k}F(Y_r)d\tilde{X}^{(ijk)}_{r}.$$
\mlabel{thm:ito2}
\end{theorem}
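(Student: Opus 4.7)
The plan is to mirror the proof of Theorem~\ref{thm:ito1}, pushing the Taylor expansion of $F$ at $X_s$ one order higher to cope with the reduced regularity. Since $4\alpha>1$, the third-order Taylor remainder is $O(|X_t-X_s|^4)=O(|t-s|^{4\alpha})=o(|t-s|)$, so after applying the shuffle property of $\X$ (Definition~\ref{def:pbrp}~(\ref{it:bitem2})) and the symmetry of the partial derivatives to rewrite the monomials $\delta X^i\delta X^j$ and $\delta X^i\delta X^j\delta X^k$ as pairings with the planar forests $\bullet_i\bullet_j$ and $\bullet_i\bullet_j\bullet_k$, I obtain the Taylor identity
\begin{align*}
\delta F(X)_{s,t}=\sum_i\partial_iF(X_s)\langle\X_{s,t},\bullet_i\rangle&+\sum_{i,j}\partial_i\partial_jF(X_s)\langle\X_{s,t},\bullet_i\bullet_j\rangle\\
&+\sum_{i,j,k}\partial_i\partial_j\partial_kF(X_s)\langle\X_{s,t},\bullet_i\bullet_j\bullet_k\rangle+o(|t-s|),
\end{align*}
which isolates the three Taylor contributions that the three integrals on the right-hand side of~(\ref{eq:integral2}) must recover.

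\textbf{Expanding the three integrals.} For the first integral, Proposition~\ref{pp:regu1} lifts $\partial_iF(X)$ to an $\X$-controlled rough path $\partial_iF(\X)$, and Lemma~\ref{thm:inte1} expands $\sum_i\R{$\Int_s^t$}\partial_iF(X_r)dX_r^i$ into the leading term $\sum_i\partial_iF(X_s)\langle\X_{s,t},\bullet_i\rangle$ plus weight-$2$ and weight-$3$ corrections carried by the ladder $\tddeuxa{$i$}{$j$}\ \,$ and the Y-shape $\tdtroisuna{$i$}{$j$}{$k$}\ \,$ arising from $[\bullet_j]_i$ and $[\bullet_j\bullet_k]_i$ (the contribution of $[\tddeuxa{$j$}{$k$}\ \,]_i$ vanishes, by Proposition~\ref{pp:regu1}). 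For the second integral, I pass to the bracket extension $\EX$ of Lemma~\ref{lem:ex}; since $\partial_i\partial_jF(\X)$ is also $\EX$-controlled by Remark~\ref{rem:cbrp}, the rough-integral formula with $\bullet_{(ij)}$ playing the role of $\bullet_i$ produces the leading term $\partial_i\partial_jF(X_s)\langle\X_{s,t},\bullet_j\bullet_i-\tddeuxa{$i$}{$j$}\ \,\rangle$, together with a weight-$3$ correction $\partial_i\partial_j\partial_kF(X_s)\langle\EX_{s,t},\tddeuxa{$(ij)$}{$k$}\ \ \ \ \rangle$ of size only $O(|t-s|^{3\alpha})$, which \emph{cannot} be absorbed into $o(|t-s|)$ when $\alpha\le\tfrac13$ and must therefore be tracked. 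For the third integral, $\tilde X^{(ijk)}$ is $3\alpha$-H\"older by~(\ref{eq:xhat}) and $\partial_i\partial_j\partial_kF(X)$ is $\alpha$-H\"older, so $\alpha+3\alpha>1$ and Young integration is well-defined, giving
\[\sum_{i,j,k}\R{$\Int_s^t$}\partial_i\partial_j\partial_kF(X_r)\,d\tilde X^{(ijk)}_r=\sum_{i,j,k}\partial_i\partial_j\partial_kF(X_s)\langle\EX_{s,t},\bullet_k\bullet_j\bullet_i-\tdtroisuna{$i$}{$j$}{$k$}\ \,-\tddeuxa{$(ij)$}{$k$}\ \ \ \ \rangle+o(|t-s|).\]

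\textbf{Combining and concluding.} Summing the three integrals and comparing with the Taylor identity, cancellations occur tree by tree: the ladder correction of $I_1$ is wiped out by the $-\tddeuxa{$i$}{$j$}\ \,$ piece inside $\langle\EX,\bullet_{(ij)}\rangle$ contributing to $I_2$; the weight-$3$ residue $\langle\EX,\tddeuxa{$(ij)$}{$k$}\ \ \ \ \rangle$ from $I_2$ is killed by the matching term inside $\delta\tilde X^{(ijk)}$ in $I_3$; and the Y-shape contribution of $I_1$ is cancelled by $-\tdtroisuna{$i$}{$j$}{$k$}\ \,$ in $I_3$, after exploiting the symmetry of $\partial_i\partial_j\partial_kF$ to identify the two planar orderings of the children. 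What survives is exactly $\sum_{i,j,k}\partial_i\partial_j\partial_kF(X_s)\langle\X_{s,t},\bullet_k\bullet_j\bullet_i\rangle$, which equals the cubic Taylor term after a final index relabeling. Hence $\delta F(X)_{s,t}-I_1-I_2-I_3=o(|t-s|)$; since the left-hand side is an additive increment in $(s,t)$, it vanishes identically, proving~(\ref{eq:integral2}).

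\textbf{Main obstacle.} The pivotal technical step is to verify that the element $\bullet_k\bullet_j\bullet_i-\tdtroisuna{$i$}{$j$}{$k$}\ \,-\tddeuxa{$(ij)$}{$k$}\ \ \ \ $ is primitive in $\hat{\mathcal H}_{\text{MKW}}$, so that~(\ref{eq:xhat}) genuinely defines an additive two-parameter increment and $\tilde X^{(ijk)}$ makes sense as a $3\alpha$-H\"older path; this is the third-order analogue of the computation~(\ref{eq:ex3}) which established the primitivity of $\bullet_j\bullet_i-\tddeuxa{$i$}{$j$}\ \,$ in the $N=2$ case, and must be checked through an explicit evaluation of $\Delta_{\text{MKW}}$ on each summand. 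Once this primitivity is in hand, the remaining work is a careful planar-tree bookkeeping based on the shuffle identity $\bullet_i\shuffle\bullet_j\shuffle\bullet_k$ and the commutativity of mixed partial derivatives of $F$.
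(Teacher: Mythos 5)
Your proposal is correct and follows essentially the same approach as the paper's proof: third-order Taylor expansion at $X_s$, expansion of each of the three integrals via Lemma~\ref{thm:inte1} (using Proposition~\ref{pp:regu1} and Remark~\ref{rem:cbrp} to obtain controlledness, and Lemma~\ref{lem:ex} for the bracket path), tree-by-tree cancellation of the corrections ($\tddeuxa{$i$}{$j$}\ \,$ from $I_1$ against $I_2$, $\tddeuxa{$(ij)$}{$k$}\ \ \ \ $ from $I_2$ against $I_3$, $\tdtroisuna{$i$}{$j$}{$k$}\ \,$ from $I_1$ against $I_3$), and the final additivity argument. You also correctly identify the primitivity of $\bullet_k\bullet_j\bullet_i-\tdtroisuna{$i$}{$j$}{$k$}\ \,-\tddeuxa{$(ij)$}{$k$}\ \ \ \ $ as the key technical prerequisite for $\tilde X^{(ijk)}$ to be a well-defined $3\alpha$-H\"older increment; the paper verifies this via a direct $\Delta_{\mathrm{MKW}}$ computation immediately after the theorem, exactly as you describe.
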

\vspace{-0.5cm}

\begin{proof}
We have
\begin{align}
F(X)_{s,t}
=&\ \sum_{i=1}^{d}\partial_iF(X_s)(X_{t}^{i}-X_{s}^{i})+\frac{1}{2}\sum_{i,j=1}^{d}\partial_i\partial_jF(X_s)(X_{t}^{i}-X_{s}^{i})(X_{t}^{j}-X_{s}^{j})\nonumber\\
&\ +\frac{1}{6}\sum_{i,j,k=1}^{d}\partial_i\partial_j\partial_kF(X_s)(X_{t}^{i}-X_{s}^{i})(X_{t}^{j}-X_{s}^{j})(X_{t}^{k}-X_{s}^{k})+O(|X_t-X_s|^4)\nonumber\\
%
&\ \hspace{4cm} \text{(by the third-order Taylor expansion of $F$ at $X_s$)} \nonumber \\
=&\ \sum_{i=1}^{d}\partial_iF(X_s)\langle\X_{s,t}, \bullet_i\rangle+\frac{1}{2}\sum_{i,j=1}^{d}\partial_i\partial_jF(X_s)\langle\X_{s,t}, \bullet_i\rangle \langle\X_{s,t}, \bullet_j\rangle\nonumber\\
&\ +\frac{1}{6}\sum_{i,j,k=1}^{d}\partial_i\partial_j\partial_kF(X_s)\langle\X_{s,t}, \bullet_i\rangle\langle\X_{s,t}, \bullet_j\rangle\langle\X_{s,t}, \bullet_k\rangle+o(|t-s|)\nonumber\\
&\ \hspace{4cm} (\text{by \X\ above $X$, $|X_t-X_s|=O(|t-s|^\alpha)$ and $4\alpha>1$})\nonumber\\
=&\ \sum_{i=1}^{d}\partial_iF(X_s)\langle\X_{s,t}, \bullet_i\rangle+\frac{1}{2}\sum_{i,j=1}^{d}\partial_i\partial_jF(X_s)\left \langle \X_{s,t}, \bullet_i \shuffle \bullet_j  \right \rangle\nonumber\\
&\ +\frac{1}{6}\sum_{i,j,k=1}^{d}\partial_i\partial_j\partial_kF(X_s)\left \langle \X_{s,t}, (\bullet_i \shuffle \bullet_j)\shuffle \bullet_k  \right \rangle+o(|t-s|) \nonumber\\
&\ \hspace{3cm} (\text{the second and third summand employ Definition~\ref{def:pbrp}~(\ref{it:bitem2})})\nonumber\\
=&\ \sum_{i=1}^{d}\partial_iF(X_s)\langle\X_{s,t}, \bullet_i\rangle+\frac{1}{2}\sum_{i,j=1}^{d}\partial_i\partial_jF(X_s)\left \langle \X_{s,t}, \bullet_i\bullet_j+\bullet_j\bullet_i  \right \rangle+\frac{1}{6}\sum_{i,j,k=1}^{d}\partial_i\partial_j\partial_kF(X_s)\nonumber\\
&\ \left \langle \X_{s,t}, \bullet_i\bullet_j\bullet_k+\bullet_i\bullet_k\bullet_j+\bullet_k\bullet_i\bullet_j+\bullet_j\bullet_j\bullet_k+\bullet_j\bullet_k\bullet_i+\bullet_k\bullet_j\bullet_i  \right \rangle+o(|t-s|)\nonumber\\
=&\ \sum_{i=1}^{d}\partial_iF(X_s)\langle\X_{s,t}, \bullet_i\rangle+\sum_{i,j=1}^{d}\partial_i\partial_jF(X_s)\langle\X_{s,t}, \bullet_i\bullet_j\rangle+\sum_{i,j,k=1}^{d}\partial_i\partial_j\partial_kF(X_s)\langle\X_{s,t}, \bullet_k\bullet_j\bullet_i\rangle\nonumber\\
&\ +o(|t-s|).\mlabel{eq:ito4}
\end{align}

For the first term on the right hand side, with a similar argument to~(\ref{eq:int2}),
\begin{align}
&\ \R{$\Int_s^t$} \partial_iF(X_r)dX_{r}^{i}\nonumber\\
=&\ \langle \etree, \partial_iF(\X_s)  \rangle \langle \X_{s,t}, \bullet_i  \rangle+\sum_{j=1}^{d}\left \langle \bullet_j, \partial_iF(\X_s)  \right \rangle \left \langle \X_{s,t}, \tddeuxa{$i$}{$j$}\ \,  \right \rangle+\sum_{j,k=1}^{d} \langle \bullet_k\bullet_j, \partial_iF(\X_s)   \rangle  \langle \X_{s,t}, \tdtroisuna{$i$}{$j$}{$k$}\ \,\rangle\nonumber\\
&\ +\sum_{j,k=1}^{d} \langle \tddeuxa{$j$}{$k$}\ \,, \partial_iF(\X_s)   \rangle  \langle \X_{s,t}, \tdddeuxa{$i$}{$j$}{$k$}\ \,\rangle+O(|t-s|^{4\alpha})\hspace{1cm} (\text{by (\ref{eq:inte1-}}))\nonumber\\
=&\ \langle \etree, \partial_iF(\X_s)  \rangle \langle \X_{s,t}, \bullet_i  \rangle+\sum_{j=1}^{d}\left \langle \bullet_j, \partial_iF(\X_s)  \right \rangle \left \langle \X_{s,t}, \tddeuxa{$i$}{$j$}\ \,  \right \rangle+\sum_{j,k=1}^{d} \langle \bullet_k\bullet_j, \partial_iF(\X_s)   \rangle  \langle \X_{s,t}, \tdtroisuna{$i$}{$j$}{$k$}\ \,\rangle\nonumber\\
&\ +\sum_{j,k=1}^{d} \langle \tddeuxa{$j$}{$k$}\ \,, \partial_iF(\X_s)   \rangle  \langle \X_{s,t}, \tdddeuxa{$i$}{$j$}{$k$}\ \,\rangle+o(|t-s|)\hspace{1cm} (\text{by $4\alpha>1$})\nonumber\\
=&\ \partial_iF(X_s)\langle \X_{s,t}, \bullet_i   \rangle+\sum_{j=1}^{d}\partial_i\partial_jF(X_s) \langle\X_{s,t}, \tddeuxa{$i$}{$j$}\ \,\rangle +\sum_{j,k=1}^{d}\partial_i\partial_j\partial_kF(X_s)  \langle\X_{s,t}, \tdtroisuna{$i$}{$j$}{$k$}\ \,\rangle+o(|t-s|)\mlabel{eq:intb2}\\
&\ \hspace{7cm} (\text{by Proposition~\ref{pp:regu1}}).\nonumber
\end{align}
Using (\ref{eq:intb2}) in place of the first term to the right hand side of (\ref{eq:ito4}),
\begin{align}
&\ F(X)_{s,t}-\sum_{i=1}^{d}\R{$\Int_s^t$} \partial_iF(X_r)dX_{r}^{i} \nonumber\\
=&\ \sum_{i,j=1}^{d}\partial_i\partial_jF(X_s) \langle \X_{s,t},\bullet_j\bullet_i- \tddeuxa{$i$}{$j$}\ \, \rangle+\sum_{i,j,k=1}^{d}\partial_i\partial_j\partial_kF(X_s) \langle \X_{s,t},\bullet_k\bullet_j\bullet_i-\tdtroisuna{$i$}{$j$}{$k$}\ \, \rangle+o(|t-s|).\mlabel{eq:intb3}
\end{align}
We are going to deal with the first term on the right hand side. Similar to (\ref{eq:int4}),
\begin{align}
&\ \R{$\Int_s^t$} \partial_i\partial_jF(X_r)d\hat{X}_{r}^{(ij)}\nonumber\\
=&\ \langle \etree, \partial_i\partial_jF(\X_s)   \rangle \langle \EX_{s,t}, \bullet_{(ij)}  \rangle+\sum_{k=1}^{d}\left \langle \bullet_k, \partial_i\partial_jF(\X_s)  \right \rangle \left \langle \EX_{s,t}, \tddeuxa{$(ij)$}{$k$}\ \ \ \ \right \rangle+\sum_{k,m=1}^{d}\langle \bullet_k\bullet_m, \partial_i\partial_jF(\X_s)   \rangle \langle \EX_{s,t}, \tdtroisuna{$(ij)$}{$m$}{$k$}\ \ \ \rangle\nonumber\\
&\ +\sum_{k,m=1}^{d}\langle \tddeuxa{$k$}{$m$}\ \,, \partial_i\partial_jF(\X_s)   \rangle \langle \EX_{s,t}, \tdddeuxa{$(ij)$}{$k$}{$m$}\ \ \ \  \rangle+O(|t-s|^{4\alpha})\nonumber\\
=&\ \left \langle \etree, \partial_i\partial_jF(\X_s)  \right \rangle \left \langle \EX_{s,t}, \bullet_{(ij)}  \right \rangle+\sum_{k=1}^{d}\left \langle \bullet_k, \partial_i\partial_jF(\X_s)  \right \rangle \left \langle \EX_{s,t}, \tddeuxa{$(ij)$}{$k$}\ \ \ \ \right \rangle+o(|t-s|)\nonumber\\
&\ \hspace{3cm} (\text{by $|\langle \EX_{s,t}, \tdtroisuna{$(ij)$}{$m$}{$k$}\ \ \ \rangle|=O(|t-s|^{4\alpha})$, $|\langle \EX_{s,t}, \tdddeuxa{$(ij)$}{$k$}{$m$}\ \ \ \  \rangle|=O(|t-s|^{4\alpha})$ and $4\alpha>1$})\nonumber\\
=&\ \partial_i\partial_jF(X_s)\langle \X_{s,t},\bullet_j\bullet_i- \tddeuxa{$i$}{$j$}\ \, \rangle+\sum_{k=1}^{d}\partial_i\partial_j\partial_kF(X_s) \left \langle \EX_{s,t}, \tddeuxa{$(ij)$}{$k$}\ \ \ \ \right \rangle+o(|t-s|)\mlabel{eq:intb4}\\
&\ \hspace{8cm} (\text{by (\ref{eq:zcbrp1}) and Lemma~\ref{lem:ex}}).\nonumber
\end{align}
Applying~(\ref{eq:intb4}) to replace the $\partial_i\partial_jF(X_s)\langle \X_{s,t},\bullet_j\bullet_i- \tddeuxa{$i$}{$j$}\ \, \rangle$ in (\ref{eq:intb3}) yields
\begin{align}
&\ F(X)_{s,t}-\sum_{i=1}^{d}\R{$\Int_s^t$} \partial_iF(X_r)dX_{r}^{i}-\sum_{i,j=1}^{d}\R{$\Int_s^t$} \partial_i\partial_jF(X_r)d\hat{X}_{r}^{(ij)}\nonumber\\
=&\ \sum_{i,j,k=1}^{d}\partial_i\partial_j\partial_kF(X_s) \langle \EX_{s,t},\bullet_k\bullet_j\bullet_i-\tdtroisuna{$i$}{$j$}{$k$}\ \,-\tddeuxa{$(ij)$}{$k$}\ \ \ \ \rangle+o(|t-s|)\nonumber\\
=&\ \sum_{i,j,k=1}^{d}\partial_i\partial_j\partial_kF(X_s)\delta \tilde{X}^{(ijk)}_{s,t}+o(|t-s|).\mlabel{eq:intb5}
\end{align}
By Definition~\ref{def:pbrp}~(\ref{it:bitem3}) and Lemma~\ref{lem:ex},
$$|\delta\tilde{X}^{(ijk)}_{s,t}|=O(| t-s | ^{3\alpha  } ).$$
Since $|X_t-X_s|=O(| t-s | ^{\alpha  } )$ and $F$ is smooth, we obtain
$$|\partial_i\partial_j\partial_kF(X_t)-\partial_i\partial_j\partial_kF(X_s)|=O(| t-s | ^{\alpha  } ).$$
Due to $\alpha+3\alpha> 1$, we can define the Young integral~\cite{You} of the first term on the right hand side of (\ref{eq:intb5})
\begin{align}
\R{$\Int_s^t$} \partial_i\partial_j\partial_kF(X_r)d\tilde{X}^{(ijk)}_{r}
:=&\ \lim_{|\pi | \to 0} \sum_{[t_u, t_{u+1}]\in \pi}\partial_i\partial_j\partial_kF(X_{t_u})\delta\tilde{X}^{(ijk)}_{t_u,t_{u+1}}\nonumber\\
=&\ \partial_i\partial_j\partial_kF(X_s)\delta\tilde{X}^{(ijk)}_{s,t}+O(|t-s|^{4\alpha}) \quad \text{(by Sewing lemma~~\mcite{BZ22})}\nonumber\\
=&\  \partial_i\partial_j\partial_kF(X_s)\delta\tilde{X}^{(ijk)}_{s,t}+o(|t-s|) \hspace{1cm}(\text{by $4\alpha>1$}).\mlabel{eq:intb16}
\end{align}
Combining (\ref{eq:intb5}) and (\ref{eq:intb16}),
$$F(X)_{s,t} - \sum_{i=1}^{d}\R{$\Int_s^t$} \partial_iF(X_r)dX_{r}^{i}-\sum_{i,j=1}^{d}\R{$\Int_s^t$} \partial_i\partial_jF(X_r)d\hat{X}_{r}^{(ij)} - \sum_{i,j,k=1}^{d}\R{$\Int_s^t$} \partial_i\partial_j\partial_kF(X_r)d\tilde{X}^{(ijk)}_{r} = o(|t-s|),$$
which implies that the left hand side must be zero by a similar argument to (\ref{eq:ito5}).
This completes the proof.
\end{proof}

Notice that the element
\begin{equation*}
\bullet_k\bullet_j\bullet_i-\tdtroisuna{$i$}{$j$}{$k$}\ \,-\tddeuxa{$(ij)$}{$k$}
\mlabel{eq:notpele}
\end{equation*}
in~(\mref{eq:xhat}) is prime in $\mathcal{\lbar{H}_{\mathrm{MKW}}}$, as
\begin{align*}
&\ \Delta_{ \text {MKW}}(\bullet_k\bullet_j\bullet_i-\tdtroisuna{$i$}{$j$}{$k$}\ \,-\tddeuxa{$(ij)$}{$k$}\ \ \ \ )\\
%
=&\ \Big(\etree \otimes \bullet_k\bullet_j\bullet_i+\bullet_k\bullet_j\bullet_i\otimes \etree +\bullet_k\otimes\bullet_j\bullet_i+\bullet_k\bullet_j\otimes\bullet_i\Big)\\
&\ -\Big(\etree \otimes \tdtroisuna{$i$}{$j$}{$k$}\ \,+\tdtroisuna{$i$}{$j$}{$k$}\ \,\otimes \etree+\bullet_k\otimes\tddeuxa{$i$}{$j$}\ \,+\bullet_k\bullet_j\otimes\bullet_i \Big)-\Big(\etree \otimes \tddeuxa{$(ij)$}{$k$}\ \ \ \ +\tddeuxa{$(ij)$}{$k$}\ \ \ \ \otimes \etree+\bullet_k\otimes\bullet_{(ij)} \Big)\\
=&\ \Big(\bullet_k\bullet_j\bullet_i-\tdtroisuna{$i$}{$j$}{$k$}\ \,-\tddeuxa{$(ij)$}{$k$}\ \ \ \ \Big)\otimes\etree+\etree\otimes\Big(\bullet_k\bullet_j\bullet_i-\tdtroisuna{$i$}{$j$}{$k$}\ \,-\tddeuxa{$(ij)$}{$k$}\ \ \ \ \Big).
\end{align*}
So once the difficulty~(a) on p.~3 is overcame, one can continue the process to obtain the It\^o formula for the simple case with roughness $\alpha\leq \frac{1}{4}$.

\subsection{It$\mathrm{\hat{o}} $ formula of the general case $F(Y)$}\label{sec:3.2}
The goal of this subsection is the It\^o formula of the general case with roughness $\alpha\in (\frac{1}{3}, \frac{1}{2}]$ and $\alpha\in (\frac{1}{4}, \frac{1}{3}]$, respectively.

For notational convenience, we use the shorthand
\begin{equation}
\begin{aligned}
 \R{$\Int_s^t$}DF(Y_r):\Big(f(Y_r) \cdot dX_r\Big):=&\ \sum_{i=1}^{d}\R{$\Int_s^t$} \Big(DF(Y_r):\big(f_i(Y_r)\big)\Big)dX^i_{r},\\
\R{$\Int_s^t$}D^2F(Y_r):\Big(\big(f(Y_r), f(Y_r) \big)\cdot d\hat{X}_r\Big):=&\ \sum_{i,j=1}^{d}\R{$\Int_s^t$}\Big(D^2F(Y_r):\big(f_i(Y_r), f_j(Y_r)\big)\Big)d\hat{X}^{(ij)}_{r},
\end{aligned}
\mlabel{eq:note2}
\end{equation}
where all integrals on the right hand side are rough integrals.

\begin{theorem}
Let $\alpha\in (\frac{1}{3},\frac{1}{2}]$, $\X\in  \pbrps $ above $X=(X^1, \ldots, X^d)$ and $\Y \in \cbrpxs$ above $Y$ be a solution to (\ref{eq:BRDE}).
Let $\hat{\X}$ above $\hat{X}$ in~(\ref{eq:2xhat}) be the bracket extension of\, $\X$ and $F:\RR^n \to \RR^n$ a smooth function. Then
\begin{equation}
\delta F(Y)_{s,t}=\R{$\Int_s^t$}DF(Y_r):\Big(\big(f(Y_r) \big)\cdot dX_{r}\Big)+\R{$\Int_s^t$} D^2F(Y_r):\Big(\big(f(Y_r), f(Y_r)\big)\cdot d\hat{X}_{r}\Big).
\mlabel{eq:integral3}
\end{equation}
\mlabel{thm:yitoa}
\end{theorem}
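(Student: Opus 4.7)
The plan is to follow the proof of Theorem~\ref{thm:ito1} with two modifications dictated by replacing $F(X)$ by $F(Y)$: first, the Taylor expansion of $\delta X_{s,t}$ read off directly from $\X$ is replaced by the Taylor expansion of $\delta Y_{s,t}$ supplied by Proposition~\ref{pp:RDE1}; second, the elementary controlled paths $\partial_iF(\X)$ and $\partial_i\partial_jF(\X)$ used in the proof of Theorem~\ref{thm:ito1} are replaced by the composition controlled paths $(DF\cdot f_i)(\Y)$ and $(D^2F\cdot(f_i,f_j))(\Y)$, which are $\X$-controlled by Lemma~\ref{lem:compose} and $\EX$-controlled by Remark~\ref{rem:cbrp}.

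First I would Taylor-expand $F(Y_t)-F(Y_s)$ to second order; the cubic remainder is $O(|t-s|^{3\alpha})=o(|t-s|)$ since $3\alpha>1$. Substituting the $N=2$ case of Proposition~\ref{pp:RDE1} and identifying $f_{\tddeuxa{$i$}{$j$}\,}(Y_s)=Df_i(Y_s):(f_j(Y_s))$ via~\eqref{eq:RDE2}, only the rank-one part $\langle\X_{s,t},\bullet_i\rangle\langle\X_{s,t},\bullet_j\rangle$ of the Hessian term survives modulo $o(|t-s|)$; by the shuffle identity (Definition~\ref{def:pbrp}~(\ref{it:bitem2})) and the symmetry of $D^2F$ this collapses to $\sum_{i,j}D^2F(Y_s):(f_i(Y_s),f_j(Y_s))\langle\X_{s,t},\bullet_j\bullet_i\rangle$.

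Next I would expand each rough integral on the right-hand side of~\eqref{eq:integral3} via Lemma~\ref{thm:inte2}. For $\R{$\Int_s^t$}DF(Y_r):(f_i(Y_r))\,dX^i_r$, the product rule $D(DF\cdot f_i)=D^2F\cdot f_i+DF\cdot Df_i$ produces, alongside the leading $\bullet_i$ term, two $\tddeuxa{$i$}{$j$}\,$ terms. For $\R{$\Int_s^t$}D^2F(Y_r):(f_i(Y_r),f_j(Y_r))\,d\hat{X}^{(ij)}_r$ applied with $\EX$ in place of $\X$, the $\tddeuxa{$(ij)$}{$k$}\ \ \ \,$ contribution is $o(|t-s|)$ (since $3\alpha>1$), and Lemma~\ref{lem:ex} rewrites the leading piece as $D^2F(Y_s):(f_i(Y_s),f_j(Y_s))\langle\X_{s,t},\bullet_j\bullet_i-\tddeuxa{$i$}{$j$}\,\rangle$.

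Summing, the $\tddeuxa{$i$}{$j$}\,$ term coming from $D^2F\cdot f_i$ in the first integral exactly cancels the $-\tddeuxa{$i$}{$j$}\,$ piece of the bracket extension in the second integral, and the remaining coefficients of $\bullet_i$, $\tddeuxa{$i$}{$j$}\,$, and $\bullet_j\bullet_i$ match the Taylor-expansion output. Hence the difference between the two sides of~\eqref{eq:integral3} is $o(|t-s|)$ and, being an increment, vanishes, as in~\eqref{eq:ito5}. The main obstacle is the algebraic bookkeeping: one must verify that the correction $-\tddeuxa{$i$}{$j$}\,$ built into the bracket extension (Lemma~\ref{lem:ex}) is exactly what is needed to kill the unwanted planar-tree contribution produced by the product rule on $DF\cdot f_i$, so that only the Taylor-expansion terms remain.
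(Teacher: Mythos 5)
Your proposal is correct and follows essentially the same route as the paper's proof: Taylor-expand $F$ at $Y_s$, substitute the tree-indexed expansion of $\delta Y_{s,t}$ from Proposition~\ref{pp:RDE1} with the identification $f_{\tddeuxaa{$i$}{$j$}\ \,}=Df_i{:}f_j$ from~\eqref{eq:RDE2}, expand the two rough integrals via Lemmas~\ref{lem:compose} and~\ref{thm:inte2} together with Remark~\ref{rem:cbrp}, apply the Leibniz identity $D(DF{:}f_i)=D^2F{:}(f_i,\cdot)+DF{:}Df_i$ to split the $\tddeuxa{$i$}{$j$}\ \,$-coefficient, observe that the $D^2F$-portion is killed by the $-\tddeuxa{$i$}{$j$}\ \,$ built into the bracket path, and conclude by the increment-of-$o(|t{-}s|)$ argument as in~\eqref{eq:ito5}. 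The only minor presentational difference is that you emphasize the cancellation as acting directly on the two integrals, whereas the paper first isolates $\delta F(Y)_{s,t}-\sum_i\R{$\Int_s^t$}DF(Y_r){:}f_i(Y_r)\,dX_r^i$ and then recognizes the bracket combination $\bullet_j\bullet_i-\tddeuxa{$i$}{$j$}\ \,$; the bookkeeping is the same.
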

\vspace{-0.5cm}

\begin{proof}
First,
\begin{align}
\delta Y_{s,t}
=&\ \sum_{i=1}^{d}f_{\bullet_i}(Y_s)\langle \X_{s,t}, \bullet_i\rangle+\sum_{i,j=1}^{d}f_{\tddeuxaa{$i$}{$j$}\ \,}(Y_s)\langle \X_{s,t}, \tddeuxa{$i$}{$j$}\ \,\rangle+o(|t-s|) \hspace{0.5cm}(\text{by~(\ref{eq:ftalor}) and~(\ref{eq:leq23})})\nonumber \\
=&\ \sum_{i=1}^{d}f_{i}(Y_s)\langle \X_{s,t}, \bullet_i\rangle+\sum_{i,j=1}^{d}Df_i(Y_s):\big(f_j(Y_s)\big)\langle \X_{s,t}, \tddeuxa{$i$}{$j$}\ \,\rangle+o(|t-s|)\mlabel{eq:yito1}\\
&\ \hspace{8cm}(\text{by~(\ref{eq:RDE2}) and Proposition~\ref{pp:RDE1}})\nonumber.
\end{align}
Using the second-order Taylor expansion of $F$ at $Y_s$,
\begin{align}
\delta F(Y)_{s,t}
=&\ \sum_{i=1}^{d}\partial_iF(Y_s)(Y_{t}^{i}-Y_{s}^{i})+\frac{1}{2}\sum_{i,j=1}^{d}\partial_i\partial_jF(Y_s)(Y_{t}^{i}-Y_{s}^{i})(Y_{t}^{j}-Y_{s}^{j})+O(|Y_t-Y_s|^3)\nonumber\\
=&\ \sum_{i=1}^{d}\partial_iF(Y_s)(Y_{t}^{i}-Y_{s}^{i})+\frac{1}{2}\sum_{i,j=1}^{d}\partial_i\partial_jF(Y_s)(Y_{t}^{i}-Y_{s}^{i})(Y_{t}^{j}-Y_{s}^{j})+o(|t-s|)\nonumber\\
&\ \hspace{6cm} (\text{by $|Y_t-Y_s|=O(|t-s|^\alpha)$ and $3\alpha>1$})\nonumber\\
=&\ DF(Y_s):\delta Y_{s,t}+\frac{1}{2}D^2F(Y_s):(\delta Y_{s,t}, \delta Y_{s,t})+o(|t-s|).\mlabel{eq:yito2}
\end{align}
Substituting (\ref{eq:yito1}) into (\ref{eq:yito2}),
\begin{align}
\delta F(Y)_{s,t}
=&\ \sum_{i=1}^{d}DF(Y_s):\big(f_{i}(Y_s)\langle \X_{s,t}, \bullet_i\rangle\big)+\sum_{i,j=1}^{d}DF(Y_s):\Big(Df_i(Y_s):\big(f_j(Y_s)\big)\langle \X_{s,t}, \tddeuxa{$i$}{$j$}\ \,\rangle\Big)  \nonumber \\
&\ +\frac{1}{2}\sum_{i, j=1}^{d}D^2F(Y_s):\Big(f_{i}(Y_s)\langle \X_{s,t}, \bullet_i\rangle, f_{j}(Y_s)\langle \X_{s,t}, \bullet_j\rangle\Big)+o(|t-s|)\nonumber\\
=&\ \sum_{i=1}^{d}DF(Y_s):\big(f_{i}(Y_s)\big)\langle \X_{s,t}, \bullet_i\rangle+\sum_{i,j=1}^{d}DF(Y_s):\Big(Df_i(Y_s):\big(f_j(Y_s)\big)\Big)\langle \X_{s,t}, \tddeuxa{$i$}{$j$}\ \,\rangle \nonumber\\
&\ +\frac{1}{2}\sum_{i, j=1}^{d}D^2F(Y_s):\Big(f_{i}(Y_s), f_{j}(Y_s)\Big)\langle \X_{s,t}, \bullet_i\rangle\langle \X_{s,t}, \bullet_j\rangle+o(|t-s|) \nonumber\\
&\ \hspace{7cm}(\text{by $\langle \X_{s,t}, \bullet_i\rangle, \langle \X_{s,t}, \tddeuxa{$i$}{$j$}\ \,\rangle \in \RR$})\nonumber\\
=&\  \sum_{i=1}^{d}DF(Y_s):(f_{i}(Y_s))\langle \X_{s,t}, \bullet_i\rangle+\sum_{i,j=1}^{d}DF(Y_s):\Big(Df_i(Y_s):(f_j(Y_s))\Big)\langle \X_{s,t}, \tddeuxa{$i$}{$j$}\ \,\rangle \nonumber\\
&\ +\frac{1}{2}\sum_{i, j=1}^{d}D^2F(Y_s):(f_{i}(Y_s), f_{j}(Y_s))\langle \X_{s,t}, \bullet_i \shuffle \bullet_j\rangle+o(|t-s|)\nonumber\\
&\ \hspace{3cm} (\text{the third summand uses Definition~\ref{def:pbrp}~(\ref{it:bitem2})})\nonumber\\
=&\ \sum_{i=1}^{d}DF(Y_s):\big(f_{i}(Y_s)\big)\langle \X_{s,t}, \bullet_i\rangle+\sum_{i,j=1}^{d}DF(Y_s):\Big(Df_i(Y_s):(f_j(Y_s))\Big)\langle \X_{s,t}, \tddeuxa{$i$}{$j$}\ \,\rangle \nonumber\\
&\ +\sum_{i, j=1}^{d}D^2F(Y_s):\Big(f_{i}(Y_s), f_{j}(Y_s)\Big)\langle \X_{s,t}, \bullet_j\bullet_i\rangle+o(|t-s|).\mlabel{eq:yito3}
\end{align}
For the first summand, using Lemma~\ref{lem:compose}, we can define an $\X$-controlled planarly branched rough path
$$DF(\Y_s):(f_{i}(\Y_s))\in \cbrpxs.$$
In terms of Theorem~\mref{thm:inte2}, we can define the rough integral of $DF(\Y_s):(f_{i}(\Y_s))$, which satisfies
\begin{align}
&\ \R{$\Int_s^t$}DF(Y_r):\big(f_{i}(Y_r)\big)dX_{r}^{i}\nonumber\\
=&\  \langle \etree, DF(\Y_s):f_{i}(\Y_s)\rangle \langle \X_{s,t}, \bullet_i \rangle+\sum_{j=1}^{d} \langle \bullet_j, DF(\Y_s):f_{i}(\Y_s) \rangle  \langle \X_{s,t}, \tddeuxa{$i$}{$j$}\ \,   \rangle+O(|t-s|^{3\alpha})\hspace{0.5cm} (\text{by (\ref{eq:inte2-})})\nonumber\\
=&\ DF(Y_s):f_{i}(Y_s) \langle \X_{s,t}, \bullet_i \rangle+\sum_{j=1}^{d}D(DF(Y_s):f_{i}(Y_s)):\langle \bullet_j,\Y_s \rangle \langle \X_{s,t}, \tddeuxa{$i$}{$j$}\ \,   \rangle+o(|t-s|)\nonumber\\
&\ \hspace{9cm} (\text{by (\ref{eq:zcbrp}) and $3\alpha>1$})\nonumber\\
=&\ DF(Y_s):f_{i}(Y_s) \langle \X_{s,t}, \bullet_i \rangle+\sum_{j=1}^{d}D(DF(Y_s):f_{i}(Y_s)):\big(f_{j}(Y_s) \big)\langle \X_{s,t}, \tddeuxa{$i$}{$j$}\ \,   \rangle+o(|t-s|)\nonumber \\
&\ \hspace{9cm} (\text{by $f_\tau(Y_t)=\langle \tau, \Y_t \rangle$}).\mlabel{eq:yito4}
\end{align}
Replacing the first term to the right hand side of (\ref{eq:yito3}) with (\ref{eq:yito4}),
\begin{align}
&\ \delta F(Y)_{s,t}\nonumber\\
=&\ \sum_{i=1}^{d}\R{$\Int_s^t$}DF(Y_r):\big(f_{i}(Y_r)\big)dX_{r}^{i}-\sum_{i,j=1}^{d}D\Big(DF(Y_s):\big(f_{i}(Y_s)\big)\Big):\big(f_{j}(Y_s)\big)\langle \X_{s,t}, \tddeuxa{$i$}{$j$}\ \,\rangle\nonumber\\
&\ +\sum_{i,j=1}^{d}DF(Y_s):\Big(Df_i(Y_s):\big(f_j(Y_s)\big)\Big)\langle \X_{s,t}, \tddeuxa{$i$}{$j$}\ \,\rangle+\sum_{i, j=1}^{d}D^2F(Y_s):\big(f_{i}(Y_s), f_{j}(Y_s)\big)\langle \X_{s,t}, \bullet_i\bullet_j\rangle\nonumber\\
&\ +o(|t-s|).\mlabel{eq:yito8}
\end{align}
For the second summand,
\begin{align}
&\ D\Big(DF(Y_s):f_{i}(Y_s)\Big):\big(f_{j}(Y_s)\big) \nonumber\\
=&\ \sum_{{\alpha_1}=1}^n\partial_{\alpha_1}\Big(DF(Y_s):f_{i}(Y_s)\Big)\big(f_{j}(Y_s)\big)^{\alpha_1} \hspace{5cm} (\text{by (\ref{eq:ieq})})\nonumber\\
=&\ \sum_{{\alpha_1}=1}^n\Big(\partial_{\alpha_1}DF(Y_s):f_{i}(Y_s)\Big)\big(f_{j}(Y_s)\big)^{\alpha_1}+\sum_{{\alpha_1}=1}^n\Big(DF(Y_s):\partial_{\alpha_1}f_{i}(Y_s)\Big)\big(f_{j}(Y_s)\big)^{\alpha_1} \hspace{0.5cm} (\text{by the Leibniz rule})\nonumber\\
%
%
=&\ \sum_{{\alpha_1}=1}^n\sum_{{\alpha_2}=1}^n\partial_{\alpha_1}\partial_{\alpha_2}F(Y_s)\big(f_{i}(Y_s)\big)^{\alpha_2}\big(f_{j}(Y_s)\big)^{\alpha_1}
+\sum_{{\alpha_2}=1}^n\partial_{\alpha_2}F(Y_s)\Big(\sum_{{\alpha_1}=1}^n\partial_{\alpha_1}f_{i}(Y_s)\big(f_{j}(Y_s)\big)^{\alpha_1}\Big)^{\alpha_2}
\quad (\text{by (\ref{eq:ieq})})
\nonumber\\
=&\ D^2F(Y_s):\Big(f_{i}(Y_s), f_{j}(Y_s)\Big)+DF(Y_s):\Big(Df_{i}(Y_s):f_{j}(Y_s)\Big) \hspace{3cm} (\text{by (\ref{eq:ieq})}).\mlabel{eq:yito5}
\end{align}
Substituting~(\mref{eq:yito5}) into~(\mref{eq:yito8}) gives
\begin{align}
&\ \delta F(Y)_{s,t}-\sum_{i=1}^{d}\R{$\Int_s^t$}DF(Y_r):f_{i}(Y_r)dX_{r}^{i}\nonumber\\
=&\ \sum_{i,j=1}^{d}DF(Y_s):\Big(Df_i(Y_s):(f_j(Y_s))\Big)\langle \X_{s,t}, \tddeuxa{$i$}{$j$}\ \,\rangle+\sum_{i, j=1}^{d}D^2F(Y_s):(f_{i}(Y_s), f_{j}(Y_s))\langle \X_{s,t}, \bullet_j\bullet_i\rangle\nonumber\\
=&\ \sum_{i, j=1}^{d}D^2F(Y_s):(f_{i}(Y_s), f_{j}(Y_s))\langle \X_{s,t}, \bullet_j\bullet_i-\tddeuxa{$i$}{$j$}\ \,\rangle+o(|t-s|) \hspace{1cm} (\text{by (\ref{eq:yito5})})\nonumber\\
=&\ \sum_{i, j=1}^{d}D^2F(Y_s):(f_{i}(Y_s), f_{j}(Y_s))\delta \hat{X}^{(ij)}_{s,t}+o(|t-s|) \hspace{1cm} (\text{by $\delta\hat{X}^{(ij)}_{s,t}= \langle \X_{s,t}, \bullet_j\bullet_i- \tddeuxa{$i$}{$j$}\ \,  \rangle$}).\mlabel{eq:yito6}
\end{align}
Similar to (\ref{eq:int4}),
\begin{align}
&\ \R{$\Int_s^t$} D^2F(Y_r):(f_{i}(Y_r), f_{j}(Y_r))d\hat{X}_{r}^{(ij)}\nonumber\\
=&\ \left \langle \etree, D^2F(\Y_s):(f_{i}(\Y_s), f_{j}(\Y_s))  \right \rangle \left \langle \EX_{s,t}, \bullet_{(ij)}  \right \rangle\nonumber\\
&\ +\sum_{k=1}^d\langle \bullet_k,  D^2F(\Y_s):(f_{i}(\Y_s), f_{j}(\Y_s))\rangle \langle \EX_{s,t}, \tddeuxa{$(ij)$}{$k$}\ \ \ \  \rangle+O(|t-s|^{3\alpha})\hspace{0.6cm} (\text{by~(\ref{eq:inte1-})})\nonumber\\
=&\ \left \langle \etree, D^2F(\Y_s):(f_{i}(\Y_s), f_{j}(\Y_s))  \right \rangle \left \langle \EX_{s,t}, \bullet_{(ij)}  \right \rangle+o(|t-s|)\nonumber\\
&\ \hspace{5cm} (\text{by $|\langle \EX_{s,t}, \tddeuxa{$(ij)$}{$k$}\ \ \ \ \rangle|=O(|t-s|^{3\alpha})$ and $3\alpha>1$})\nonumber\\
=&\ D^2F(Y_s):(f_{i}(Y_s), f_{j}(Y_s))\delta \hat{X}^{(ij)}_{s,t}+o(|t-s|) \hspace{1.5cm} (\text{by Lemma~\ref{lem:ex}}).\mlabel{eq:yito7}
\end{align}
Employing (\ref{eq:yito7}) to replace the first summand in the right hand side of (\ref{eq:yito6}) yields
$$\delta F(Y)_{s,t}-\sum_{i=1}^{d}\R{$\Int_s^t$}DF(Y_r):f_{i}(Y_r)dX_{r}^{i}-\sum_{i,j=1}^{d}\R{$\Int_s^t$} D^2F(Y_r):(f_{i}(Y_r), f_{j}(Y_r))d\hat{X}_{r}^{(ij)}=o(|t-s|),$$
whence with a similar argument to (\ref{eq:ito5}),
\begin{align*}
\delta F(Y)_{s,t}
=&\ \sum_{i=1}^{d}\R{$\Int_s^t$}DF(Y_r):f_{i}(Y_r)dX_{r}^{i}+\sum_{i,j=1}^{d}\R{$\Int_s^t$} D^2F(Y_r):(f_{i}(Y_r), f_{j}(Y_r))d\hat{X}_{r}^{(ij)}\\
=&\ \R{$\Int_s^t$}DF(Y_r):\Big(\big(f(Y_r) \big)\cdot dX_{r}\Big)+\R{$\Int_s^t$} D^2F(Y_r):\Big(\big(f(Y_r), f(Y_r)\big)\cdot d\hat{X}_{r}\Big) \hspace{0.5cm} (\text{by (\ref{eq:note2})}).
\end{align*}
 This completes the proof.
\end{proof}

Finally, we arrive at the It\^o formula for general case $F(Y)$ with roughness $\alpha\in (\frac{1}{4},\frac{1}{3}]$. Denote
\begin{equation}
\R{$\Int_s^t$}D^3F(Y_r):\Big(\big(f(Y_r), f(Y_r), f(Y_r)  \big)\cdot d\tilde{X}_r\Big):=\sum_{i, j, k=1}^{d} \R{$\Int_s^t$} \Big(D^3F(Y_r):\big(f_i(Y_r), f_j(Y_r), f_k(Y_r)\big)\Big)d\tilde{X}^{(ijk)}_{r},
\label{eq:thirdint}
\end{equation}
where all integrals on the right hand side are Young integrals.

\begin{theorem}
Let $\alpha\in (\frac{1}{4},\frac{1}{3}]$, $\X\in \pbrpt $ above $X=(X^1, \ldots, X^d)$ and $\Y \in \cbrpxt$ above $Y$ be a solution to (\ref{eq:BRDE}). Let $\hat{\X}$ above $\hat{X}$ in~(\ref{eq:2xhat}) be the bracket extension of\, $\X$, $\tilde{X}$ be defined in (\ref{eq:xhat}) and $F:\RR^n \to \RR^n$ be a smooth function. For $i,j,k=1,\ldots,d$, define  $\cbar{X}^{(ijk)}:[0,T] \to \RR$ via (with a given initial value)
\begin{equation}
\delta \cbar{X}^{(ijk)}_{s,t}:=\langle \EX_{s,t}, \bullet_i \tddeuxa{$j$}{$k$}\ \,+\tddeuxa{$j$}{$k$}\ \, \bullet_i-\tdddeuxa{$i$}{$j$}{$k$}\ \,-\tddeuxa{$(ij)$}{$k$}\ \ \ \ -\tddeuxa{$(ji)$}{$k$}\ \ \ \  \rangle,\quad \forall s,t\in [0, T].
\mlabel{eq:debx}
\end{equation}
Then
\begin{equation}
\begin{aligned}
&\ \delta F(Y)_{s,t}\\
=&\ \R{$\Int_s^t$}DF(Y_r):\Big(\big(f(Y_r) \big)\cdot dX_{r}\Big)+\R{$\Int_s^t$} D^2F(Y_r):\Big(\big(f(Y_r), f(Y_r)\big)\cdot d\hat{X}_{r}\Big)\\
&\ +\R{$\Int_s^t$} D^3F(Y_r):\Big(\big(f(Y_r), f(Y_r), f(Y_r)\big)\cdot d\tilde{X}_{r}\Big)+\R{$\Int_s^t$} D^2F(Y_r):\Big(\big(f(Y_r), Df(Y_r):f(Y_r) \big)\cdot d\cbar{X}_{r}\Big),
\end{aligned}
\mlabel{eq:integral4}
\end{equation}
where the first and second integrals are from~(\ref{eq:note2}), the third integral is from~(\ref{eq:thirdint}) and the fourth integral as a Young integral is defined by
\begin{equation*}
\R{$\Int_s^t$}D^2F(Y_r):\Big(\big(f(Y_r), Df(Y_r):f(Y_r)  \big)\cdot d\cbar{X}_r\Big):=\sum_{i, j, k=1}^{d}\R{$\Int_s^t$}\Big(D^2F(Y_r):\big(f_i(Y_r), Df_j(Y_r):f_k(Y_r)\big)\Big)d\cbar{X}^{(ijk)}_{r}.
\end{equation*}
\mlabel{thm:yitob}
\end{theorem}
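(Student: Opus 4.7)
The plan is to imitate the architecture of the three preceding It\^o formulas (Theorems~\ref{thm:ito1}, \ref{thm:ito2}, and~\ref{thm:yitoa}), adapted to the richer combinatorics forced by truncation at $N=3$ and by the presence of the solution $Y$ rather than the driver $X$. First I would write the third-order Taylor expansion of $F$ at $Y_s$: since $|\delta Y_{s,t}|=O(|t-s|^\alpha)$ and $4\alpha>1$, the Taylor remainder is $o(|t-s|)$, giving
\begin{equation*}
\delta F(Y)_{s,t}= DF(Y_s){:}\delta Y_{s,t}+\tfrac12 D^2F(Y_s){:}(\delta Y_{s,t},\delta Y_{s,t})+\tfrac16 D^3F(Y_s){:}(\delta Y_{s,t},\delta Y_{s,t},\delta Y_{s,t})+o(|t-s|).
\end{equation*}
Into this I would substitute Proposition~\ref{pp:RDE1} specialised to $N=3$, expressing $\delta Y_{s,t}$ as a linear combination of $\langle\X_{s,t},\tau\rangle$ over $\tau\in\mathcal{T}^{\leq 3}\setminus\{\etree\}$ with coefficients $f_\tau(Y_s)$ unpacked through~\eqref{eq:RDE2} and~\eqref{eq:ftau}.

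Next I would expand every product $\langle\X_{s,t},\tau_1\rangle\cdots\langle\X_{s,t},\tau_m\rangle$ using the shuffle character property of Definition~\ref{def:pbrp}(\ref{it:bitem2}) and drop anything whose total planar weight exceeds $3$ into $o(|t-s|)$. The key shuffles that arise are $\bullet_i\shuffle\bullet_j$ (from the quadratic Taylor term), $\bullet_i\shuffle\bullet_j\shuffle\bullet_k$ (from the cubic one), and the cross shuffle coming from quadratic cross terms, namely $\bullet_i\shuffle\tddeuxa{$j$}{$k$}\ \,=\bullet_i\tddeuxa{$j$}{$k$}\ \,+\tddeuxa{$j$}{$k$}\ \,\bullet_i$; this last pattern is exactly what sits inside~\eqref{eq:debx}.

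I would then absorb the collected expansion into the four advertised integrals. For the $\bullet_i$ piece, the controlled path $DF(\Y){:}f_i(\Y)$ built by Lemma~\ref{lem:compose}, together with Lemma~\ref{thm:inte1}, gives $\int_s^t DF(Y_r){:}f_i(Y_r)\,dX_r^{i}$, whose $4\alpha$-expansion produces the $\bullet_i$ coefficient plus first-order corrections along $\tddeuxa{$i$}{$j$}\ \,$, $\tdtroisuna{$i$}{$j$}{$k$}\ \,$, and $\tdddeuxa{$i$}{$j$}{$k$}\ \,$. Likewise $D^2F(\Y){:}(f_i(\Y),f_j(\Y))$, viewed as $\hat{\X}$-controlled through Remark~\ref{rem:cbrp}, gives $\int_s^t D^2F(Y_r){:}(f_i(Y_r),f_j(Y_r))\,d\hat X_r^{(ij)}$, contributing the $\bullet_j\bullet_i-\tddeuxa{$i$}{$j$}\ \,$ piece modulo corrections indexed by $\tddeuxa{$(ij)$}{$k$}\ \ \ \,$. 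The Leibniz identity $D(DF{:}f_i){:}f_j=D^2F{:}(f_i,f_j)+DF{:}(Df_i{:}f_j)$ of~\eqref{eq:yito5}, together with its one-higher analogue, couples these corrections to the $f_\tau$-expansion of $\delta Y$ and makes the matching algebraically feasible. The remaining cubic and mixed pieces are absorbed by the Young integrals against $\tilde X^{(ijk)}$ and $\cbar X^{(ijk)}$; for both, the sewing lemma applied as in~\eqref{eq:intb16} delivers the required leading-order identification modulo $O(|t-s|^{4\alpha})=o(|t-s|)$. After subtracting the four integrals the remainder is simultaneously an $(s,t)$-increment and $o(|t-s|)$, so exactly as in~\eqref{eq:ito5} it must vanish identically.

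The main obstacle will be the combinatorial bookkeeping underlying the last identification. One must check that $\cbar X^{(ijk)}$ of~\eqref{eq:debx} is legitimate, namely that the underlying element
\begin{equation*}
\bullet_i\tddeuxa{$j$}{$k$}\ \,+\tddeuxa{$j$}{$k$}\ \,\bullet_i-\tdddeuxa{$i$}{$j$}{$k$}\ \,-\tddeuxa{$(ij)$}{$k$}\ \ \ \,-\tddeuxa{$(ji)$}{$k$}\ \ \ \,
\end{equation*}
is primitive in $\cbar{\mathcal H}_{\mathrm{MKW}}$ (a direct coproduct computation analogous to the primitivity check at the end of Theorem~\ref{thm:ito2}), that $|\delta\cbar X^{(ijk)}_{s,t}|=O(|t-s|^{3\alpha})$ so that the Young integral against it is admissible (since $\alpha+3\alpha>1$), and that the bookkeeping is tight, i.e.\ every cubic $\bullet\bullet\bullet$ and every mixed $\bullet\cdot\tddeux$ contribution from the expansion is accounted for exactly once among the four integrals. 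Once these checks are in place, the remainder of the argument parallels Theorems~\ref{thm:ito2} and~\ref{thm:yitoa} essentially line by line.
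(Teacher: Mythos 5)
Your outline matches the paper's proof step for step: third-order Taylor expansion of $F$ at $Y_s$, insertion of Proposition~\ref{pp:RDE1} via~\eqref{eq:RDE2} to express $\delta Y_{s,t}$ in the $f_\tau$-basis, shuffle expansion of the products of $\langle\X_{s,t},\cdot\rangle$, identification of the four integrals through the Leibniz identities~\eqref{eq:yito5} and their higher analogues, the sewing lemma for the two Young integrals, and the final ``increment plus $o(|t-s|)$ vanishes'' argument. The two checks you flag --- primitivity of the element underlying $\cbar{X}^{(ijk)}$ and the anisotropic H\"older bound $|\delta\cbar{X}^{(ijk)}_{s,t}|=O(|t-s|^{3\alpha})$ --- are precisely the points the paper uses without explicit verification (compare its appeal to $|\langle\EX_{s,t},\tdtroisuna{$(ij)$}{$m$}{$k$}\ \,\rangle|=O(|t-s|^{4\alpha})$ in~\eqref{eq:intb4}), so your caution is well placed, but your approach is the same as the paper's.
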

\vspace{-0.5cm}

\begin{proof}
First,
\begin{align}
&\ \delta Y_{s,t}\nonumber\\
=&\ \sum_{i=1}^{d}f_{\bullet_i}(Y_s)\langle \X_{s,t}, \bullet_i\rangle+\sum_{i,j=1}^{d}f_{\tddeuxaa{$i$}{$j$}\ \,}(Y_s)\langle \X_{s,t}, \tddeuxa{$i$}{$j$}\ \,\rangle \nonumber \\
&\ +\sum_{i,j,k=1}^{d}f_{\tdddeuxaa{$i$}{$j$}{$k$}\ \,}(Y_s)\langle \X_{s,t}, \tdddeuxa{$i$}{$j$}{$k$}\ \,\rangle+\sum_{i,j,k=1}^{d}f_{\tdtroisuna{$i$}{$k$}{$j$}\ \,}(Y_s)\langle \X_{s,t}, \tdtroisuna{$i$}{$k$}{$j$}\ \,\rangle+o(|t-s|)\nonumber \hspace{1cm}(\text{by (\mref{eq:ftalor})})\nonumber\\
=&\ \sum_{i=1}^{d}f_{i}(Y_s)\langle \X_{s,t}, \bullet_i\rangle+\sum_{i,j=1}^{d}Df_{i}(Y_s):\big(f_{j}(Y_s)\big)\langle \X_{s,t}, \tddeuxa{$i$}{$j$}\ \,\rangle+\sum_{i,j,k=1}^{d}Df_{i}(Y_s):\Big(Df_{j}(Y_s):f_{k}(Y_s)\Big)\langle \X_{s,t}, \tdddeuxa{$i$}{$j$}{$k$}\ \,\rangle\nonumber\\
&\ +\sum_{i,j,k=1}^{d}D^2f_{i}(Y_s):\Big(f_{j}(Y_s), f_{k}(Y_s)\Big)\langle \X_{s,t}, \tdtroisuna{$i$}{$k$}{$j$}\ \,\rangle+o(|t-s|) \hspace{0.5cm}(\text{by $f_{\bullet_i} = f_i$ and (\ref{eq:RDE2})}).\mlabel{eq:yitob1}
\end{align}
Employing the third-order Taylor expansion of $F$ at $Y_s$ gives
\begin{align}
\delta F(Y)_{s,t}
=&\ \sum_{i=1}^{d}\partial_iF(Y_s)(Y_{t}^{i}-Y_{s}^{i})+\frac{1}{2}\sum_{i,j=1}^{d}\partial_i\partial_jF(Y_s)(Y_{t}^{i}-Y_{s}^{i})(Y_{t}^{j}-Y_{s}^{j})\nonumber \\
&\ +\frac{1}{6}\sum_{i,j,k=1}^{d}\partial_i\partial_j\partial_kF(Y_s)(Y_{t}^{i}-Y_{s}^{i})(Y_{t}^{j}-Y_{s}^{j})(Y_{t}^{k}-Y_{s}^{k})+O(|Y_t-Y_s|^4)\nonumber\\
=&\ \sum_{i=1}^{d}\partial_iF(Y_s)(Y_{t}^{i}-Y_{s}^{i})+\frac{1}{2}\sum_{i,j=1}^{d}\partial_i\partial_jF(Y_s)(Y_{t}^{i}-Y_{s}^{i})(Y_{t}^{j}-Y_{s}^{j})\nonumber\\
&\ +\frac{1}{6}\sum_{i,j,k=1}^{d}\partial_i\partial_j\partial_kF(Y_s)(Y_{t}^{i}-Y_{s}^{i})(Y_{t}^{j}-Y_{s}^{j})(Y_{t}^{k}-Y_{s}^{k})+o(|t-s|)\nonumber\\
&\ \hspace{6cm} (\text{by $|Y_t-Y_s|=O(|t-s|^\alpha)$ and $4\alpha>1$})\nonumber\\
=&\ DF(Y_s):(\delta Y_{s,t})+\frac{1}{2}D^2F(Y_s):(\delta Y_{s,t}, \delta Y_{s,t})\nonumber\\
&\  +\frac{1}{6}D^3F(Y_s):(\delta Y_{s,t}, \delta Y_{s,t}, \delta Y_{s,t})+o(|t-s|) \quad (\text{by~(\ref{eq:ieq})}).\mlabel{eq:yitob4}
\end{align}
Plugging (\ref{eq:yitob1}) into (\ref{eq:yitob4}),
\begin{align}
\delta F(Y)_{s,t}
=&\ \sum_{i=1}^{d}DF(Y_s):\big(f_{i}(Y_s)\langle \X_{s,t}, \bullet_i\rangle\big)+\sum_{i, j=1}^{d}DF(Y_s):\Big(Df_{i}(Y_s):f_{j}(Y_s)\langle \X_{s,t}, \tddeuxa{$i$}{$j$}\ \,\rangle\Big) \nonumber\\
&\ +\sum_{i,j,k=1}^{d}DF(Y_s):\Big(Df_{i}(Y_s):\big(Df_{j}(Y_s):f_{k}(Y_s)\big)\langle \X_{s,t}, \tdddeuxa{$i$}{$j$}{$k$}\ \,\rangle  \Big)\nonumber\\
&\ +\sum_{i,j,k=1}^{d}DF(Y_s):\Big(D^2f_{i}(Y_s):\big(f_{j}(Y_s), f_{k}(Y_s)\big)\langle \X_{s,t}, \tdtroisuna{$i$}{$k$}{$j$}\ \,\rangle \Big)\nonumber\\
&\ +\frac{1}{2}\sum_{i, j=1}^{d}D^2F(Y_s):(f_{i}(Y_s)\langle \X_{s,t}, \bullet_i\rangle, f_{j}(Y_s)\langle \X_{s,t}, \bullet_j\rangle)  \nonumber\\
&\ +\frac{1}{2}\sum_{i, j, k=1}^{d}D^2F(Y_s):\Big(f_{i}(Y_s)\langle \X_{s,t}, \bullet_i\rangle, Df_{j}(Y_s):f_{k}(Y_s)\langle \X_{s,t}, \tddeuxa{$j$}{$k$}\ \,\rangle\Big) \nonumber\\
&\ +\frac{1}{2}\sum_{i, j, k=1}^{d}D^2F(Y_s):\Big(Df_{j}(Y_s):f_{k}(Y_s)\langle \X_{s,t}, \tddeuxa{$j$}{$k$}\ \,\rangle, f_{i}(Y_s)\langle \X_{s,t}, \bullet_i\rangle\Big) \nonumber\\
&\ +\frac{1}{6}\sum_{i,j,k=1}^{d}D^3F(Y_s):\Big(f_{i}(Y_s)\langle \X_{s,t}, \bullet_i\rangle, f_{j}(Y_s)\langle \X_{s,t}, \bullet_j\rangle, f_{k}(Y_s)\langle \X_{s,t}, \bullet_k\rangle\Big)+o(|t-s|)\nonumber\\
&\  \hspace{6cm} (\text{by Definition~\ref{def:pbrp}~(\ref{it:bitem2}) and $\langle \X_{s,t}, \tau \rangle=0$ for $|\tau|>3$}).\mlabel{eq:yitob2}
\end{align}
The sum of the last four summands is equal to
\begin{align*}
&\ \frac{1}{2}\sum_{i, j=1}^{d}D^2F(Y_s):(f_{i}(Y_s), f_{j}(Y_s))\langle \X_{s,t}, \bullet_i \shuffle \bullet_j\rangle\\
+&\ \frac{1}{2}\sum_{i, j, k=1}^{d}D^2F(Y_s):\Big(f_{i}(Y_s), Df_{j}(Y_s):f_{k}(Y_s)\Big)\langle \X_{s,t}, \bullet_i \shuffle \tddeuxa{$j$}{$k$}\ \,\rangle   \\
+&\ \frac{1}{2}\sum_{i, j, k=1}^{d}D^2F(Y_s):\Big(Df_{j}(Y_s):f_{k}(Y_s), f_{i}(Y_s)\Big)\langle \X_{s,t}, \tddeuxa{$j$}{$k$}\ \,\shuffle \bullet_i\rangle   \\
+&\ \frac{1}{6}\sum_{i, j, k=1}^{d}D^3F(Y_s):\Big(f_{i}(Y_s), f_{j}(Y_s), f_{k}(Y_s)\Big)\langle \X_{s,t}, \bullet_i  \shuffle\bullet_j \shuffle\bullet_k\rangle+o(|t-s|)\\
&\ \hspace{8cm} (\text{by Definition~\ref{def:pbrp}~(\ref{it:bitem2})})\\
=&\ \sum_{i, j=1}^{d}D^2F(Y_s):(f_{i}(Y_s), f_{j}(Y_s))\langle \X_{s,t}, \bullet_j\bullet_i\rangle\\
&\ +\sum_{i, j, k=1}^{d}D^2F(Y_s):\Big(f_{i}(Y_s), Df_{j}(Y_s):f_{k}(Y_s)\Big)\langle \X_{s,t}, \bullet_i \tddeuxa{$j$}{$k$}\ \,\rangle  \\
&\ +\sum_{i, j, k=1}^{d}D^2F(Y_s):\Big(Df_{j}(Y_s):f_{k}(Y_s), f_{i}(Y_s)\Big)\langle \X_{s,t}, \tddeuxa{$j$}{$k$}\ \, \bullet_i\rangle  \\
&\ +\sum_{i,j,k=1}^{d}D^3F(Y_s):\Big(f_{i}(Y_s), f_{j}(Y_s), f_{k}(Y_s)\Big)\langle \X_{s,t}, \bullet_k  \bullet_j \bullet_i\rangle+o(|t-s|).
\end{align*}
Substituting the above equation into (\ref{eq:yitob2}) yields
\begin{align*}
\delta F(Y)_{s,t}=&\
\sum_{i=1}^{d}DF(Y_s):(f_{i}(Y_s))\langle \X_{s,t}, \bullet_i\rangle+\sum_{i, j=1}^{d}DF(Y_s):\Big(Df_{i}(Y_s):f_{j}(Y_s)\Big)\langle \X_{s,t}, \tddeuxa{$i$}{$j$}\ \,\rangle\\
&\ +\sum_{i,j,k=1}^{d}DF(Y_s):\Big(Df_{i}(Y_s):\big(Df_{j}(Y_s):f_{k}(Y_s)\big)  \Big)\langle \X_{s,t}, \tdddeuxa{$i$}{$j$}{$k$}\ \,\rangle\\
&\ +\sum_{i,j,k=1}^{d}DF(Y_s):\Big(D^2f_{i}(Y_s):\big(f_{j}(Y_s), f_{k}(Y_s)\big) \Big)\langle \X_{s,t}, \tdtroisuna{$i$}{$k$}{$j$}\ \,\rangle\\
&\ +\sum_{i, j=1}^{d}D^2F(Y_s):(f_{i}(Y_s), f_{j}(Y_s))\langle \X_{s,t}, \bullet_j\bullet_i\rangle\\
&\ +\sum_{i, j, k=1}^{d}D^2F(Y_s):\Big(f_{i}(Y_s), Df_{j}(Y_s):f_{k}(Y_s)\Big)\langle \X_{s,t}, \bullet_i \tddeuxa{$j$}{$k$}\ \,\rangle  \\
&\ +\sum_{i, j, k=1}^{d}D^2F(Y_s):\Big(Df_{j}(Y_s):f_{k}(Y_s), f_{i}(Y_s)\Big)\langle \X_{s,t}, \tddeuxa{$j$}{$k$}\ \, \bullet_i\rangle   \\
&\ +\sum_{i,j,k=1}^{d}D^3F(Y_s):\Big(f_{i}(Y_s), f_{j}(Y_s), f_{k}(Y_s)\Big)\langle \X_{s,t}, \bullet_k  \bullet_j \bullet_i\rangle+o(|t-s|).
\end{align*}
Regarding the first summand, by Lemma~\ref{lem:compose}, we can define an $\X$-controlled planarly branched rough path
$DF(\Y_s):\big(f_{i}(\Y_s)\big)\in \cbrpxs.$
Then we can define the rough integral $\R{$\Int_s^t$}DF(Y_r):f_{i}(Y_r)dX_{r}^{i}$ by Lemma~\ref{thm:inte1} and have
\begin{align}
&\ \R{$\Int_s^t$}DF(Y_r):f_{i}(Y_r)dX_{r}^{i}\nonumber\\
=&\  \langle \etree, DF(\Y_s):f_{i}(\Y_s)\rangle \langle \X_{s,t}, \bullet_i \rangle+\sum_{j=1}^{d} \langle \bullet_j, DF(\Y_s):f_{i}(\Y_s) \rangle  \langle \X_{s,t}, \tddeuxa{$i$}{$j$}\ \,  \rangle\nonumber\\
&\ +\sum_{j,k=1}^{d} \langle \bullet_j\bullet_k, DF(\Y_s):f_{i}(\Y_s) \rangle \langle \X_{s,t},\tdtroisuna{$i$}{$k$}{$j$}\ \,\rangle+\sum_{j,k=1}^{d} \langle \tddeuxa{$j$}{$k$}\ \,, DF(\Y_s):f_{i}(\Y_s) \rangle \langle \X_{s,t},\tdddeuxa{$i$}{$j$}{$k$}\ \,\rangle+O(|t-s|^{4\alpha})\nonumber\\
&\ \hspace{10cm} (\text{by (\ref{eq:inte1-})})\nonumber\\
=&\  \langle \etree, DF(\Y_s):f_{i}(\Y_s)\rangle \langle \X_{s,t}, \bullet_i \rangle+\sum_{j=1}^{d} \langle \bullet_j, DF(\Y_s):f_{i}(\Y_s) \rangle  \langle \X_{s,t}, \tddeuxa{$i$}{$j$}\ \,  \rangle\nonumber\\
&\ +\sum_{j,k=1}^{d} \langle \bullet_j\bullet_k, DF(\Y_s):f_{i}(\Y_s) \rangle \langle \X_{s,t},\tdtroisuna{$i$}{$k$}{$j$}\ \,\rangle+\sum_{j,k=1}^{d} \langle \tddeuxa{$j$}{$k$}\ \,, DF(\Y_s):f_{i}(\Y_s) \rangle \langle \X_{s,t},\tdddeuxa{$i$}{$j$}{$k$}\ \,\rangle+o(|t-s|)\nonumber\\
&\ \hspace{10cm} (\text{by $4\alpha>1$})\nonumber\\
=&\ DF(Y_s):f_{i}(Y_s) \langle \X_{s,t}, \bullet_i \rangle+\sum_{j=1}^{d}D(DF(Y_s):f_{i}(Y_s)):\langle \bullet_j,\Y_s \rangle \langle \X_{s,t}, \tddeuxa{$i$}{$j$}\ \, \rangle\nonumber\\
&\ +\sum_{j,k=1}^{d}\Big(D(DF(Y_s):f_{i}(Y_s)):\langle \bullet_j\bullet_k, \Y_s \rangle+D^2(DF(Y_s):f_{i}(Y_s)):(\langle \bullet_j,\Y_s \rangle\langle \bullet_k,\Y_s \rangle)   \Big)\langle \X_{s,t},\tdtroisuna{$i$}{$k$}{$j$}\ \,\rangle\nonumber\\
&\ +\sum_{j,k=1}^{d}D(DF(Y_s):f_{i}(Y_s)):\langle \tddeuxa{$j$}{$k$}\ \,, \Y_s \rangle\langle \X_{s,t},\tdddeuxa{$i$}{$j$}{$k$}\ \,\rangle+o(|t-s|)\hspace{1cm} (\text{by (\ref{eq:zcbrp})})\nonumber\\
=&\ DF(Y_s):f_{i}(Y_s) \langle \X_{s,t}, \bullet_i \rangle+\sum_{j=1}^{d}D(DF(Y_s):f_{i}(Y_s)):f_{j}(Y_s)\langle \X_{s,t}, \tddeuxa{$i$}{$j$}\ \,   \rangle\nonumber\\
&\ +\sum_{j,k=1}^{d}D^2(DF(Y_s):f_{i}(Y_s)):(f_{j}(Y_s),f_{k}(Y_s)) \langle \X_{s,t},\tdtroisuna{$i$}{$k$}{$j$}\ \,\rangle\nonumber\\
&\ +\sum_{j,k=1}^{d}D(DF(Y_s):f_{i}(Y_s)):f_{\tddeuxa{$j$}{$k$}\ \,}(Y_s)\langle \X_{s,t},\tdddeuxa{$i$}{$j$}{$k$}\ \,\rangle+o(|t-s|). \mlabel{eq:yitob3}
\end{align}
Here in the last step, the first part of the third summand vanishes by~(\ref{eq:RDE1}) and others employ~(\mref{eq:ftau}).
Using (\ref{eq:yitob3}) in place of the first term to the right hand side of (\ref{eq:yitob2}),
\begin{align}
\delta F(Y)_{s,t}
=&\ \sum_{i=1}^{d}\R{$\Int_s^t$}DF(Y_r):f_{i}(Y_r)dX_{r}^{i}-\sum_{i,j=1}^{d}D(DF(Y_s):f_{i}(Y_s)):f_{j}(Y_s)\langle \X_{s,t}, \tddeuxa{$i$}{$j$}\ \,\rangle \nonumber\\
&-\sum_{i,j,k=1}^{d}D^2(DF(Y_s):f_{i}(Y_s)):(f_{j}(Y_s),f_{k}(Y_s)) \langle \X_{s,t},\tdtroisuna{$i$}{$k$}{$j$}\ \,\rangle\nonumber\\
&-\sum_{i,j,k=1}^{d}D(DF(Y_s):f_{i}(Y_s)):(Df_j(Y_s):f_k(Y_s))\langle \X_{s,t},\tdddeuxa{$i$}{$j$}{$k$}\ \,\rangle \nonumber\\
&\ +\sum_{i, j=1}^{d}DF(Y_s):\Big(Df_{i}(Y_s):f_{j}(Y_s)\Big)\langle \X_{s,t}, \tddeuxa{$i$}{$j$}\ \,\rangle\nonumber\\
&\ +\sum_{i,j,k=1}^{d}DF(Y_s):\Big(Df_{i}(Y_s):\big(Df_{j}(Y_s):f_{k}(Y_s)\big)  \Big)\langle \X_{s,t}, \tdddeuxa{$i$}{$j$}{$k$}\ \,\rangle\nonumber\\
&\ +\sum_{i,j,k=1}^{d}DF(Y_s):\Big(D^2f_{i}(Y_s):\big(f_{j}(Y_s), f_{k}(Y_s)\big) \Big)\langle \X_{s,t}, \tdtroisuna{$i$}{$k$}{$j$}\ \,\rangle\nonumber\\
&\ +\sum_{i, j=1}^{d}D^2F(Y_s):(f_{i}(Y_s), f_{j}(Y_s))\langle \X_{s,t}, \bullet_j\bullet_i\rangle\nonumber\\
&\ +\sum_{i, j, k=1}^{d}D^2F(Y_s):\Big(f_{i}(Y_s), Df_{j}(Y_s):f_{k}(Y_s)\Big)\langle \X_{s,t}, \bullet_i \tddeuxa{$j$}{$k$}\ \,\rangle   \nonumber\\
&\ +\sum_{i, j, k=1}^{d}D^2F(Y_s):\Big(Df_{j}(Y_s):f_{k}(Y_s), f_{i}(Y_s)\Big)\langle \X_{s,t}, \tddeuxa{$j$}{$k$}\ \, \bullet_i\rangle   \nonumber\\
&\ +\sum_{i,j,k=1}^{d}D^3F(Y_s):\Big(f_{i}(Y_s), f_{j}(Y_s), f_{k}(Y_s)\Big)\langle \X_{s,t}, \bullet_k  \bullet_j \bullet_i\rangle+o(|t-s|).\mlabel{eq:yitob5}
\end{align}
For the third and fourth summands, employing the same argument to (\ref{eq:yito5}) yields
\begin{align}
&\ D^2\big(DF(Y_s):f_{i}(Y_s)\big):\big(f_{j}(Y_s),f_{k}(Y_s)\big)\nonumber \\
 =&\ D^3F(Y_s):\Big(f_{i}(Y_s),f_{j}(Y_s),f_{k}(Y_s) \Big)+DF(Y_s):\Big(D^2f_{i}(Y_s):\big(f_{j}(Y_s),f_{k}(Y_s)\big)\Big)\mlabel{eq:yitoo5}
\end{align}
and
\begin{align}
&\ D\big(DF(Y_s):f_{i}(Y_s)\big):\big(Df_j(Y_s):f_k(Y_s)\big)\nonumber\\
 =&\ D^2F(Y_s):\Big(f_{i}(Y_s), Df_j(Y_s):\big(f_k(Y_s)\big)\Big)+DF(Y_s):\Big(Df_{i}(Y_s)):(Df_j(Y_s):f_k(Y_s)) \Big).\mlabel{eq:yitooo5}
\end{align}
Inserting (\ref{eq:yito5}), (\ref{eq:yitoo5}) and (\ref{eq:yitooo5}) into (\ref{eq:yitob5}),
\begin{align}
\delta F(Y)_{s,t}
=&\ \sum_{i=1}^{d}\R{$\Int_s^t$}DF(Y_r):f_{i}(Y_r)dX_{r}^{i}+\sum_{i, j=1}^{d}D^2F(Y_s):\Big(f_{i}(Y_s), f_{j}(Y_s)\Big)\langle \X_{s,t}, \bullet_j\bullet_i-\tddeuxa{$i$}{$j$}\ \,\rangle \nonumber\\
&\ +\sum_{i, j, k=1}^{d}D^2F(Y_s):\Big(f_{i}(Y_s), Df_{j}(Y_s):f_{k}(Y_s)\Big)\langle \X_{s,t}, \bullet_i \tddeuxa{$j$}{$k$}\ \,+\tddeuxa{$j$}{$k$}\ \, \bullet_i-\tdddeuxa{$i$}{$j$}{$k$}\ \,\rangle   \nonumber\\
&\ +\sum_{i,j,k=1}^{d}D^3F(Y_s):\Big(f_{i}(Y_s), f_{j}(Y_s), f_{k}(Y_s)\Big)\langle \X_{s,t}, \bullet_k  \bullet_j \bullet_i-\tdtroisuna{$i$}{$k$}{$j$}\ \,\rangle+o(|t-s|).\mlabel{eq:yitob6}
\end{align}
Similar to (\ref{eq:intb4}),
\begin{align}
&\ \R{$\Int_s^t$} D^2F(Y_r):\big(f_{i}(Y_r), f_{j}(Y_r)\big)d\hat{X}_{r}^{(ij)}\nonumber\\
=&\  \langle \etree, D^2F(\Y_s):\big(f_{i}(\Y_s), f_{j}(\Y_s)\big) \rangle \langle \EX_{s,t}, \bullet_{(ij)} \rangle+\sum_{k=1}^{d} \langle \bullet_k, D^2F(\Y_s):\big(f_{i}(\Y_s), f_{j}(\Y_s)\big) \rangle \langle \EX_{s,t}, \tddeuxa{$(ij)$}{$k$}\ \ \ \ \ \rangle\nonumber\\
&\ +\sum_{k,m=1}^{d}\langle \bullet_k\bullet_m, D^2F(\Y_s):\big(f_{i}(\Y_s), f_{j}(\Y_s)\big)   \rangle \langle \EX_{s,t}, \tdtroisuna{$(ij)$}{$m$}{$k$}\ \ \ \rangle\nonumber\\
&\ +\sum_{k,m=1}^{d}\langle \tddeuxa{$k$}{$m$}\ \,, D^2F(\Y_s):\big(f_{i}(\Y_s), f_{j}(\Y_s)\big)   \rangle \langle \EX_{s,t}, \tdddeuxa{$(ij)$}{$k$}{$m$}\ \ \ \  \rangle+O(|t-s|^{4\alpha})\nonumber\\
=&\  \langle \etree, D^2F(\Y_s):\big(f_{i}(\Y_s), f_{j}(\Y_s)\big) \rangle \langle \EX_{s,t}, \bullet_{(ij)} \rangle\nonumber\\
&\ +\sum_{k=1}^{d} \langle \bullet_k, D^2F(\Y_s):\big(f_{i}(\Y_s), f_{j}(\Y_s)\big) \rangle \langle \EX_{s,t}, \tddeuxa{$(ij)$}{$k$}\ \ \ \ \ \rangle+o(|t-s|)\nonumber\\
&\ \hspace{3cm} (\text{by $|\langle \EX_{s,t}, \tdtroisuna{$(ij)$}{$m$}{$k$}\ \ \ \rangle|=O(|t-s|^{4\alpha})$, $|\langle \EX_{s,t}, \tdddeuxa{$(ij)$}{$k$}{$m$}\ \ \ \  \rangle|=O(|t-s|^{4\alpha})$ and $4\alpha>1$})\nonumber\\
=&\ D^2F(Y_s):(f_{i}(Y_s), f_{j}(Y_s))\langle \X_{s,t},\bullet_j\bullet_i- \tddeuxa{$i$}{$j$}\ \, \rangle\nonumber\\
&\ +\sum_{k=1}^{d}D\Big(D^2F(Y_s):(f_{i}(Y_s), f_{j}(Y_s))\Big):f_{k}(Y_s) \langle \EX_{s,t}, \tddeuxa{$(ij)$}{$k$}\ \ \ \  \rangle+o(|t-s|). \hspace{1cm} (\text{by (\ref{eq:zcbrp})})\nonumber\\
=&\ D^2F(Y_s):(f_{i}(Y_s), f_{j}(Y_s))\langle \X_{s,t},\bullet_j\bullet_i- \tddeuxa{$i$}{$j$}\ \, \rangle+\sum_{k=1}^{d}D^3F(Y_s):\Big(f_{i}(Y_s), f_{j}(Y_s), f_{k}(Y_s)\Big)\langle \EX_{s,t}, \tddeuxa{$(ij)$}{$k$}\ \ \ \  \rangle\nonumber\\
&\ +\sum_{k=1}^{d}D^2F(Y_s):\Big((f_{i}(Y_s):f_{k}(Y_s)), f_{j}(Y_s)\Big) \langle \EX_{s,t}, \tddeuxa{$(ij)$}{$k$}\ \ \ \  \rangle\nonumber\\
&\ +\sum_{k=1}^{d}D^2F(Y_s):\Big((f_{j}(Y_s):f_{k}(Y_s)), f_{i}(Y_s)\Big) \langle \EX_{s,t}, \tddeuxa{$(ij)$}{$k$}\ \ \ \  \rangle
+o(|t-s|) \nonumber  \\
&\ \hspace{3cm} (\text{by a similar argument of (\ref{eq:yito5}) to the second summand} )\nonumber\\
=&\ D^2F(Y_s):(f_{i}(Y_s), f_{j}(Y_s))\langle \X_{s,t},\bullet_j\bullet_i- \tddeuxa{$i$}{$j$}\ \, \rangle+\sum_{k=1}^{d}D^3F(Y_s):\Big(f_{i}(Y_s), f_{j}(Y_s), f_{k}(Y_s)\Big)\langle \EX_{s,t}, \tddeuxa{$(ij)$}{$k$}\ \ \ \  \rangle\nonumber\\
&\ +\sum_{k=1}^{d}D^2F(Y_s):\Big((f_{i}(Y_s):f_{k}(Y_s)), f_{j}(Y_s)\Big) \langle \EX_{s,t}, \tddeuxa{$(ij)$}{$k$}\ \ \ \ +\tddeuxa{$(ji)$}{$k$}\ \ \ \  \rangle+o(|t-s|)\nonumber\\
&\ \hspace{1cm} (\text{exchange $i$ and $j$ in the fourth summand and combine the third and fourth summands}).\mlabel{eq:itob5}
\end{align}
Using (\ref{eq:itob5}) to substitute $D^2F(Y_s):(f_{i}(Y_s), f_{j}(Y_s))\langle \X_{s,t},\bullet_j\bullet_i- \tddeuxa{$i$}{$j$}\ \, \rangle$ in~(\ref{eq:yitob6}),
\begin{align}
&\ \delta F(Y)_{s,t}-\sum_{i=1}^{d}\R{$\Int_s^t$}DF(Y_r):f_{i}(Y_r)dX_{r}^{i}-\sum_{i, j=1}^{d}\R{$\Int_s^t$} D^2F(Y_r):\Big(f_{i}(Y_r), f_{j}(Y_r)\Big)d\hat{X}_{r}^{(ij)}  \nonumber\\
=&\ \sum_{i, j, k=1}^{d}D^2F(Y_s):\Big(f_{i}(Y_s), Df_{j}(Y_s):f_{k}(Y_s)\Big)\langle \EX_{s,t}, \bullet_i \tddeuxa{$j$}{$k$}\ \,+\tddeuxa{$j$}{$k$}\ \, \bullet_i-\tdddeuxa{$i$}{$j$}{$k$}\ \,-\tddeuxa{$(ij)$}{$k$}\ \ \ \ -\tddeuxa{$(ji)$}{$k$}\ \ \ \  \rangle   \nonumber\\
&\ +\sum_{i,j,k=1}^{d}D^3F(Y_s):\Big(f_{i}(Y_s), f_{j}(Y_s), f_{k}(Y_s)\Big)\langle \EX_{s,t}, \bullet_k  \bullet_j \bullet_i-\tdtroisuna{$i$	}{$k$}{$j$}\ \,-\tddeuxa{$(ij)$}{$k$}\ \ \ \ \rangle+o(|t-s|)\nonumber\\
=&\ \sum_{i, j, k=1}^{d}D^2F(Y_s):\Big(f_{i}(Y_s), Df_{j}(Y_s):f_{k}(Y_s)\Big)\delta\cbar{X}^{(ijk)}_{s,t} \nonumber\\
&\ +\sum_{i,j,k=1}^{d}D^3F(Y_s):\Big(f_{i}(Y_s), f_{j}(Y_s), f_{k}(Y_s)\Big)\delta \tilde{X}^{(ijk)}_{s,t}+o(|t-s|)
\hspace{0.2cm} (\text{by~(\ref{eq:xhat}) and~(\ref{eq:debx})}).\mlabel{eq:itob4}
\end{align}
Since $\alpha+3\alpha>1$, similar to (\ref{eq:intb16}), we can define two Young integrals~\cite{You} of the two terms of the right hand side of~(\ref{eq:itob4}) as:
\begin{align}
&\ \R{$\Int_s^t$} D^2F(Y_r):\Big(f_{i}(Y_r), Df_{j}(Y_r):f_{k}(Y_r)\Big)d\cbar{X}^{(ijk)}_{r}\nonumber\\
:=&\ \lim_{|\pi | \to 0} \sum_{[t_u, t_{u+1}]\in \pi}D^2F(Y_{t_u}):\Big(f_{i}(Y_{t_u}), Df_{j}(Y_{t_u}):f_{k}(Y_{t_u})\Big)\delta\cbar{X}^{(ijk)}_{t_u,t_{u+1}}\nonumber\\
=&\ D^2F(Y_s):\Big(f_{i}(Y_s), Df_{j}(Y_s):f_{k}(Y_s)\Big)\delta\cbar{X}^{(ijk)}_{s,t}+o(|t-s|) \mlabel{eq:itob7}
\end{align}
and
\begin{align}
&\ \R{$\Int_s^t$} D^3F(Y_r):\Big(f_{i}(Y_r), f_{j}(Y_r), f_{k}(Y_r)\Big)d\tilde{X}^{(ijk)}_{r} \nonumber\\
:=&\ \lim_{|\pi | \to 0} \sum_{[t_u, t_{u+1}]\in \pi}D^3F(Y_{t_u}):\Big(f_{i}(Y_{t_u}), f_{j}(Y_{t_u}), f_{k}(Y_{t_u})\Big)\delta\tilde{X}^{(ijk)}_{t_u,t_{u+1}}\nonumber\\
=&\ D^3F(Y_s):\Big(f_{i}(Y_s), f_{j}(Y_s), f_{k}(Y_s)\Big)\delta\tilde{X}^{(ijk)}_{s,t}+o(|t-s|),\mlabel{eq:itob8}
\end{align}
where $\pi$ is an arbitrary partition of $[0, T]$. Plugging (\ref{eq:itob7}) and (\ref{eq:itob8}) into the last equation of (\ref{eq:itob4}),
\begin{align*}
&\ \delta F(Y)_{s,t}-\sum_{i=1}^{d}\R{$\Int_s^t$}DF(Y_r):f_{i}(Y_r)dX_{r}^{i}-\sum_{i, j=1}^{d}\R{$\Int_s^t$} D^2F(Y_r):\Big(f_{i}(Y_r), f_{j}(Y_r)\Big)d\hat{X}_{r}^{(ij)}\\
=&\ \sum_{i,j,k=1}^{d} \R{$\Int_s^t$} D^2F(Y_r):\Big(f_{i}(Y_r), Df_{j}(Y_r):f_{k}(Y_r)\Big)d\cbar{X}^{(ijk)}_{r}\\
&\ + \sum_{i,j,k=1}^{d}\R{$\Int_s^t$} D^3F(Y_r):\Big(f_{i}(Y_r), f_{j}(Y_r), f_{k}(Y_r)\Big)d\tilde{X}^{(ijk)}_{r}+o(|t-s|).
\end{align*}
With a similar argument to deal with~(\ref{eq:ito5}),
\begin{align*}
&\ \delta F(Y)_{s,t}\\
=&\ \sum_{i=1}^{d}\R{$\Int_s^t$}DF(Y_r):f_{i}(Y_r)dX_{r}^{i}+\sum_{i, j=1}^{d}\R{$\Int_s^t$} D^2F(Y_r):\Big(f_{i}(Y_r), f_{j}(Y_r)\Big)d\hat{X}_{r}^{(ij)}\\
&\ +\sum_{i,j,k=1}^{d} \R{$\Int_s^t$} D^2F(Y_r):\Big(f_{i}(Y_r), Df_{j}(Y_r):f_{k}(Y_r)\Big)d\cbar{X}^{(ijk)}_{r}\\
&\ + \sum_{i,j,k=1}^{d}\R{$\Int_s^t$} D^3F(Y_r):\Big(f_{i}(Y_r), f_{j}(Y_r), f_{k}(Y_r)\Big)d\hat{X}^{(ijk)}_{r} \\
=&\ \R{$\Int_s^t$}DF(Y_r):\Big(\big(f(Y_r) \big)\cdot dX_{r}\Big)+\R{$\Int_s^t$} D^2F(Y_r):\Big(\big(f(Y_r), f(Y_r)\big)\cdot d\hat{X}_{r}\Big)\\
&\ +\R{$\Int_s^t$} D^3F(Y_r):\Big(\big(f(Y_r), f(Y_r), f(Y_r)\big)\cdot d\tilde{X}_{r}\Big)+\R{$\Int_s^t$} D^2F(Y_r):\Big(\big(f(Y_r), Df(Y_r):f(Y_r) \big)\cdot d\cbar{X}_{r}\Big)\\
&\ \hspace{10cm} (\text{by (\ref{eq:note2})}).
\end{align*}
This completes the proof.
\end{proof}

\begin{remark}
\begin{enumerate}
\item As Kelly explained~\cite[p.~127]{Kel},  since $\alpha+2\alpha>1$, the second integrals in~(\mref{eq:integral1}) and~(\mref{eq:integral3}) could also choose as Young integrals.
The aim of choosing as rough integrals is for further application in the case of $\alpha\in (\frac{1}{4},\frac{1}{3}]$.

\item The choice of the Young integral for the third integral in~(\mref{eq:integral2}) is because of $\alpha+3\alpha> 1$, and the same reason is for the third and fourth integrals in~(\mref{eq:integral4}).
\end{enumerate}
\end{remark}

\vskip 0.2in

\noindent
{\bf Acknowledgments.} This work is supported by the National Natural Science Foundation of China (12071191), Innovative Fundamental Research Group Project of Gansu Province (23JRRA684) and Longyuan Young Talents of Gansu Province.

\noindent
{\bf Declaration of interests. } The authors have no conflicts of interest to disclose.

\noindent
{\bf Data availability. } Data sharing is not applicable as no new data were created or analyzed.

\vspace{-.3cm}

\end{document}